\newcommand*{\avint}{\mathop{\ooalign{$\int_{\mathbb{S}^n}$\cr$-$}}}
\begin{document}
\sloppy 

\title[Infinitely many sign-changing solutions of a critical fractional equation]{Infinitely many sign-changing solutions of a critical fractional equation}
\author{Emerson Abreu}
\address{Universidade Federal de Minas Gerais (UFMG), Departamento de Matem\'{a}tica, Caixa Postal 702, 30123-970, Belo Horizonte, MG, Brazil}
\email{eabreu@ufmg.br}
\author{Ezequiel Barbosa}
\address{Universidade Federal de Minas Gerais (UFMG), Departamento de Matem\'{a}tica, Caixa Postal 702, 30123-970, Belo Horizonte, MG, Brazil}
\email{ezequiel@mat.ufmg.br}
\author{Joel Cruz Ramirez}
\address{Universidade Federal de Minas Gerais (UFMG), Departamento de Matem\'{a}tica, Caixa Postal 702, 30123-970, Belo Horizonte, MG, Brazil}
\email{jols.math@gmail.com}
\subjclass[2010]{{35J60}, {35C21}}
\keywords{Blow up point; Unit sphere; Uniqueness; Conformally invariant operators; Fractional Laplacian; sign-changing solution; }
\thanks{This study was financed by the Brazilian agencies: Conselho Nacional de Desenvolvimento Cient\'{i}fico e Tecnol\'{o}gico (CNPq), Coordena\c c\~ao de
Aperfei\c coamento de Pessoal de N\'{i}vel Superior (CAPES), and Funda\c c\~ao de Amparo \`a Pesquisa do Estado de Minas Gerais (FAPEMIG)}

\date{\today}

\begin{abstract}
In this paper, we obtain non existence results of positive solutions, and also the existence of an unbounded sequence of solutions that changing sign for some critical problems involving conformally invariant operators on the standard unit sphere, and the fractional Laplacian operator in the Euclidean space. Our arguments are based on a reduction of the initial problem in the Euclidean space to an equivalent problem on the standard unit sphere and vice versa, what together to blow up arguments, a variant of Pohozaev's type identity, a refinement of regularity results for this type operators, and finally, by exploiting the symmetries of the sphere.
\end{abstract}

\maketitle
\numberwithin{equation}{section}
\newtheorem{theorem}{Theorem}[section]
\newtheorem{corollary}[theorem]{Corollary}
\newtheorem{remark}{Remark}[section]
\newtheorem{lemma}[theorem]{Lemma}
\newtheorem{definition}[theorem]{Definition}
\newtheorem{proposition}[theorem]{Proposition}
\newtheorem{example}[theorem]{Example}
\allowdisplaybreaks

\section{Introduction} 

	In recent years, a wide variety of problems involving nonlocal operators have been intensively studied by many researchers. Examples of these operators are the conformally invariant operators $P_k^g$  on Riemannian manifolds $(M^n,g)$ constructed by  Graham et al.\cite{GJM}, where $k$ is any positive integer if $n$ is odd, or $k \in \{1,.., n/2\}$ if $n$ is even. Moreover, for $n>2$, we have that $P_1^g$ is the well known conformal Laplacian $-\Delta_g + c(n)R_g$, where $\Delta_g$ is the Laplace-Beltrami operator, $c(n) = (n-2)/4(n-1)$, $P_1^g(1)=R_g$ is the scalar curvature of $M$. Also, for $n>4$, $P_2^g$ is the Paneitz operator and $P_2^g(1)$ is the $Q$-curvature.  
	
	Making use of a generalized Dirichlet to Neumann map, Graham and Zworski \cite{GZ} showed the existence of the conformally invariant operators $P^g_s$ of non-integer order $s\in(0,n/2)$ on the conformal infinity of asymptotically hyperbolic manifolds. Using this result, Chang and Gonz\'{a}lez \cite{CG} were able to define conformally invariant operators $P_s^g$ of non-integer order, by using the localization method of Caffarelli and Silvestre \cite{CS}. Thus, these  operators lead naturally to a fractional order curvature $R_s^g=P_s^g(1)$, which is called $s$-curvature. A typical example is the standard unit sphere $(\mathbb{S}^n, g_{\mathbb{S}^n})$ with $n>2$. Let $g$ be a representative in the conformal class $[g_{\mathbb{S}^n}] = \{\tilde{\rho} g_{\mathbb{S}^n} ; 0 < \tilde{\rho} \in C^{\infty}({\mathbb{S}^n} )\}$. By looking to $(\mathbb{S}^n, [g_{\mathbb{S}^n}])$ as the conformal infinity of the Poincar\'e ball $(\mathbb{B}^{n+1},g_{\mathbb{B}^{n+1}})$ and using Graham-Zworski's work, we obtain a family of conformally covariant (pseudo) differential operators $P_s^g$ for $g \in [g_{\mathbb{S}^n}]$ and $s\in(0,n/2)$. These operators satisfy the following conformal transformation relation (see also \cite{CG, GQ}):
\begin{equation}
\label{r1.1}
P_s^{g}(u)=\rho^{-\frac{n+2s}{n-2s}}P_s^{g_{\mathbb{S}^n}}(\rho u) \text{ for all } \rho, u \in C^{\infty}(\mathbb{S}^n) \text{ with } g=\rho^{\frac{4}{n-2s}} g_{\mathbb{S}^n}.
\end{equation}
Moreover, the $s$-curvature for $(\mathbb{S}^n, g)$ can be computed as
\begin{equation*}
R^g_s=\rho^{-\frac{n+2s}{n-2s}}P_s(\rho),
\end{equation*} 
where $P_s:=P_s^{g_{\mathbb{S}^n}}$ can be written as (see Beckner \cite{Bec}, Branson \cite{Bra}, and Morpurgo \cite{CM}):
\begin{equation*}
	P_s=\frac{\Gamma\left(B + \frac{1}{2}+s\right)}{\Gamma\left(B + \frac{1}{2}-s\right)},~ B=\sqrt{-\Delta_{g_{\mathbb{S}^n}} +\left(\frac{n-1}{2}\right)^2},
	\end{equation*}
	where $\Gamma$ is the Gamma function and $\Delta_{g_{\mathbb{S}^n}}$ is the Laplace-Beltrami operator on $(\mathbb{S}^n , g_{\mathbb{S}^n})$. Using stereographic projection we obtain a better view of $P_s$ on $\mathbb{R}^n$. Indeed, let $N$ be the north pole of $\mathbb{S}^n$, and $\mathcal{F}^{-1}: \mathbb{S}^n\backslash \{N\}\rightarrow\mathbb{R}^n$ the stereographic projection, which is the inverse of 
\begin{equation*}
\mathcal{F}:\mathbb{R}^n\rightarrow \mathbb{N}^n\backslash \{S\},~ y\mapsto \left(\frac{2y}{1+|y|^2},-\frac{1-|y|^2}{1+|y|^2} \right).
\end{equation*}
Then
	\begin{equation*}
	(P_s u)\circ \mathcal{F}=|J_{\mathcal{F}}|^{-\frac{n+2s}{2n}} (-\Delta)^s (|J_{\mathcal{F}}|^{\frac{n-2s}{2n}} (u\circ \mathcal{F}))~\text{ for all } u\in C^{\infty}(\mathbb{S}^n),
	\end{equation*}
where $J_{\mathcal{F}}$ is the Jacobian of $\mathcal{F}$ and $(-\Delta)^s$ is the fractional Laplacian operator (see, e.g., page 117 of \cite{EStein}).

	There are several works on nonlocal problems involving these conformally invariant operators of fractional order and $s$-curvature, e.g., the singular fractional Yamabe problem, the fractional Yamabe flow and the fractional Nirenberg problem are investigated (see \cite{CK, GQ, JLX1, JLX2, JX} and references therein).
	
In this work, we will consider the following problem
\begin{equation}
\label{g1.1}
 L^g_s u := P^g_s u - R^g_s u =f(\zeta,u) \text{ in } \mathbb{S}^n,
\end{equation}
where $g \in [g_{\mathbb{S}^n}]$, $s\in (0,n/2)$ and  $f:\mathbb{S}^n\times \mathbb{R} \rightarrow \mathbb{R}$ is a continuous function verifying the following conditions:
\begin{itemize}
\item[$(F_1)$] $f(\zeta,-t)=-f(\zeta, t)$ for all $\zeta\in \mathbb{S}^n$ and  $t\in \mathbb{R}$;
\item[$(F_2)$] For $p\in (1,(n+2)/(n-2s)]$, there exists a positive constant $C$ and  such that 
\begin{equation*}
|f(\zeta, t)|\leq C(1+|t|^{p}) ~\text{ for all }  \zeta\in \mathbb{S}^n \text{ and }   t\in \mathbb{R}^n;
\end{equation*}
\item[$(F_3)$] There are two constants $\mu>2$ and $R>0$ such that
\begin{equation*}
0<\mu F(\zeta, t)\leq tf(\zeta, t) ~\text{ for all } \zeta\in \mathbb{S}^n \text{ and } |t|\geq R,
\end{equation*}
where $F(\zeta,t)=\int_{0}^{t}f(\zeta,\tau)d\tau$ for all $\zeta\in \mathbb{S}^n$ and $t\in \mathbb{R}$.
\end{itemize}

If $s=1$ and $f(\cdot,t) = |t|^{p-1}t - \lambda t$, then \eqref{g1.1} becomes
\begin{equation}
\label{g1.2}
-\Delta_g u =u^{p} -\lambda u,~u>0 \text{ in } \mathbb{S}^n.
\end{equation}  
When $p=(n+2)/(n-2)$, the problem (\ref{g1.2}) is related to the so called Brezis-Nirenberg problem if $\lambda$ is the negative constant, whereas, if $4(n-1)\lambda=(n-2)R_g$, where $R_g$ denotes the scalar curvature of $\mathbb{S}^n$, then (\ref{g1.2}) is just the classical Yamabe problem in the conformal geometry. In the case where $g=g_{\mathbb{S}^n}$ is the standard metric,  there are infinitely many solutions to the Yamabe problem
with respect to the metric $g$ since the conformal group of the standard unit $n$-sphere is also infinite. On the other hand, in the spirit of Lin-Ni’s conjecture \cite{LN}, it was shown by Brezis and Li \cite{BL} for the case $n=3$ and by Hebey \cite{EH2} for $n\geq 4$ with $R_g>0$ that there exists a $\overline{\lambda}>0$ such that (\ref{g1.2}) has only a constant solution for $0<\lambda<\overline{\lambda}$. Such kind of uniqueness results for (\ref{g1.2}) was also shown by Licois and V\'{e}ron \cite{LV} for the subcritical case, that is, $1<p<(n+2)/(n-2)$. Other versions of such results can be found in \cite{BV, DEL, GS}.

 Brezis and Li \cite{BL} also studied the problem \eqref{g1.1} in the standard sphere $(\mathbb{S}^n,g_{\mathbb{S}^n})$. They showed that  \eqref{g1.1} admits only constant solutions provided that $s=1$, $f$ is such that $f(\cdot,t)=f(t)$ and the function $h(t) = t^{-(n+2)/(n-2)}(f(t) + n(n - 2)t/4)$ is decreasing on $(0,+\infty)$. Hence, considering the particular problem (\ref{g1.2}) on the standard sphere, they showed that if $0 < \lambda < n(n-2)/4$, then the only positive solution to (\ref{g1.2}) is the constant $u\equiv \lambda^{1/(p-1)}$. Recently, we generalized the Brezis-Li's results for systems in the case $s\in (0,n/2)$, see \cite{ABR} . 

 Throughout the paper, we assume that
 \begin{equation}
 \label{g1.40}
 \lambda_{1,s,g}=\min_{u\in H^s(\mathbb{S}^n,g)}\frac{\int_{\mathbb{S}^n}(u-\overline{u})(P^g_s(u-\overline{u})-R^g_s(u-\overline{u}))d\upsilon_g}{\int_{\mathbb{S}^n}(u-\overline{u})^2d\upsilon_g}>0,
 \end{equation}
where $s\in(1,n/2)$, $H^s(\mathbb{S}^n, g)$ is the completion of the space of smooth functions $C^{\infty}(\mathbb{S}^n)$ under the norm $\|\cdot\|_{s,g}$ defined by
\begin{equation}
\label{g1.10}
\|u\|^2_{s,g}:=\int_{\mathbb{S}^n}uP^g_s u~d\upsilon_g, ~u\in C^{\infty}(\mathbb{S}^n),
\end{equation}
and $\overline{u}:=\avint u~d\upsilon_g$. If $s=1$, the value $\lambda_{1,1,g}$ is the first positive eigenvalue of the operator $P^g_1-R^g_1=-\Delta_g$ on $(\mathbb{S}^n,g)$. When $s\in(0,1)$, Pavlov and Samko \cite{PS} showed that
\begin{equation}
	\label{g6.15}
	P_s(u)(\zeta)=C_{n,-s}P.V.\int_{{\mathbb{S}^n}}\frac{u(\zeta)-u(z)}{|\zeta-z|^{n+2s}}d\upsilon_{g_{\mathbb{S}^n}}^{(z)} + P_s(1)u(\zeta),~u\in C^2({\mathbb{S}^n}),~\zeta\in {\mathbb{S}^n},
	\end{equation}
	where $C_{n,-s}=\frac{2^{2s}s\Gamma(\frac{n+2s}{2})}{\pi^{\frac{n}{2}}\Gamma(1-s)}$, $|\cdot|$ is the Euclidean distance in $\mathbb{R}^{n+1}$ and $P.V.\int_{{\mathbb{S}^n}}$ is understood as $\lim_{\varepsilon \rightarrow 0}\int_{|x-y|>\varepsilon}$.
 From (\ref{g6.15})  one can without difficulty check that (\ref{g1.40}) holds for $s\in (0,1)$ (see \cite{ABR}). In the case $g=g_{\mathbb{S}^n}$, the value of $\lambda_{1,s,g}$ is determined by spherical harmonics (see Appendix).

Motivated by the above results we can state our first result about the uniqueness of solutions for \eqref{g1.1} as follows.
\begin{theorem}
\label{gt2.1}
Let $g\in [g_{\mathbb{S}^n}]$ be a conformal metric on $\mathbb{S}^n$. Assume that $R^g_s$ is positive for $p=(n+2s)/(n-2s)$. If $s\in (0,n/2)$ and $p\in(1,(n+2s)/(n-2s)]$, then there exists some $\lambda^*=\lambda^*(n,s,g, \mathbb{S}^n)>0$ such that the only positive solution of \eqref{g1.1}, with $f(\cdot,t) = t^p - \lambda t$, is the constant solution $u=\lambda^{1/(p-1)}$ for $0<\lambda<\lambda^*$ on $(\mathbb{S}^n,g)$.
\end{theorem}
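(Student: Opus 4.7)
I would argue by contradiction. Suppose no such $\lambda^*$ exists; then there are sequences $\lambda_k \downarrow 0$ and positive non-constant solutions $u_k$ of $P_s^g u_k - R_s^g u_k = u_k^{p} - \lambda_k u_k$. Since $P_s^g$ is linear with $P_s^g(1)=R_s^g$, the operator $L_s^g := P_s^g - R_s^g$ annihilates constants. Set $v_k:=u_k-\bar u_k$, where $\bar u_k:=\avint u_k\, d\upsilon_g$. Pairing the equation with $v_k$, using self-adjointness of $P_s^g$ and the variational definition (\ref{g1.40}) of $\lambda_{1,s,g}$, together with the mean-value bound $|u_k^p-\bar u_k^p|\le p\|u_k\|_\infty^{p-1}|v_k|$, yields
\[
\lambda_{1,s,g}\int_{\mathbb{S}^n} v_k^2\, d\upsilon_g \;\le\; \bigl(p\|u_k\|_\infty^{p-1}-\lambda_k\bigr)\int_{\mathbb{S}^n} v_k^2\, d\upsilon_g.
\]
Since $v_k\not\equiv 0$, this forces the uniform lower bound $\|u_k\|_\infty^{p-1}\ge (\lambda_{1,s,g}+\lambda_k)/p$.

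I would then split on $M_k:=\|u_k\|_\infty$. If $M_k$ stays bounded, the refined regularity for $P_s^g$ cited in the introduction provides uniform Hölder estimates, and a subsequence converges in $C^0(\mathbb{S}^n)$ to some $u_\infty\ge 0$ with $u_\infty\not\equiv 0$ (by the above bound) solving $L_s^g u_\infty = u_\infty^p$. Integrating over $\mathbb{S}^n$ and using $\int_{\mathbb{S}^n} L_s^g u_\infty\, d\upsilon_g = 0$ (self-adjointness of $P_s^g$ plus $P_s^g(1)=R_s^g$) forces $\int_{\mathbb{S}^n} u_\infty^p\, d\upsilon_g = 0$, a contradiction.

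If instead $M_k\to\infty$, attained at points $\zeta_k\to\zeta_\infty$, I would perform a blow-up: compose with stereographic projection centered near $\zeta_k$ and rescale $w_k(y):=M_k^{-1}u_k\bigl(\mathcal{F}(y_k + M_k^{-(p-1)/(2s)}y)\bigr)$. Using the conformal covariance (\ref{r1.1}), the equation becomes $(-\Delta)^s w_k = w_k^p + o(1)$ on exhausting balls, with $w_k(0)=1$ and $0\le w_k\le 1$. By $C^0_{\mathrm{loc}}$ convergence, $w_k\to w_\infty$, a bounded positive solution of $(-\Delta)^s w_\infty = w_\infty^p$ on $\mathbb{R}^n$ with $w_\infty(0)=1$. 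For subcritical $p$, the fractional Liouville theorem of Chen--Li--Ou and its extensions give $w_\infty\equiv 0$, a contradiction. For the critical exponent $p=(n+2s)/(n-2s)$, $w_\infty$ is necessarily a standard bubble; here I would invoke the Pohozaev-type identity promised in the abstract, paired with a conformal vector field on $\mathbb{S}^n$, so that the leading term is fixed by the non-vanishing energy of $w_\infty$ while the remaining terms carry a definite sign from $R_s^g>0$, producing the desired contradiction as $k\to\infty$. With both branches excluded, $\lambda^*$ exists, and any positive constant $u\equiv c$ solving $L_s^g u = u^p-\lambda u$ satisfies $c^p=\lambda c$, forcing $c=\lambda^{1/(p-1)}$.

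The hardest step will be the critical branch of the blow-up: choosing a rescaling that interacts cleanly with $P_s^g$ through (\ref{r1.1}), verifying that the remainder terms are genuinely $o(1)$ in the relevant topology, and setting up a Pohozaev-type identity whose sign is controlled uniformly in $k$. The bounded and subcritical alternatives are comparatively routine once the refined regularity is in hand.
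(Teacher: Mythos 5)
Your strategy matches the paper's: establish compactness of solutions via blow-up analysis and a Pohozaev-type identity, then use the spectral gap $\lambda_{1,s,g}$ to force constancy for small $\lambda$, with the subcritical alternative ruled out by a fractional Liouville theorem. The paper organizes this slightly differently --- Lemma \ref{gl8.1} first yields a uniform $C^{2s}$ a priori bound for all $0<\lambda<\tilde\lambda$ by assuming blow-up and deriving a contradiction, then integrating the equation against $1$ gives $\|u\|_{L^p}\le C\lambda^{1/(p-1)}$ and hence $\|u\|_{L^\infty}\le C\lambda^{1/(p-1)}$, and the oscillation estimate closes the argument --- but these are the same ingredients you identified, merely reordered. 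Two remarks on your critical branch, which you rightly flag as the hard part: the paper's Pohozaev identities (Proposition \ref{gp3.7}) are formulated for the \emph{integral} equation on $\mathbb{R}^n$ after stereographic projection, not via conformal vector fields on $\mathbb{S}^n$, and there are in fact two identities used depending on whether $s\le n/4$ or $n<4s$; moreover the contradiction does not come directly from ``non-vanishing bubble energy versus terms signed by $R^g_s>0$'' in one line, but from the full classification of isolated simple blow-up points (Propositions \ref{gp3.1} and \ref{gp3.13}), the sharp decay $v_i(x)\le Cv_i(x_i)^{-1}|x-x_i|^{2s-n}$ (Proposition \ref{gp3.10}), and the comparison of divergent versus bounded sides of the identity --- this machinery is where most of the proof actually lives.
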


If $g=g_{\mathbb{S}^n}$, it was shown in \cite{ABR} that $\lambda^* = \Gamma(n/2+s)/\Gamma(n/2-s)$. Moreover, if $p$ is a subcritical exponent, then the conclusion of the above theorem holds for $\lambda=\lambda^*$ on $(\mathbb{S}^n,g_{\mathbb{S}^n})$.

 The proof of Theorem \ref{gt2.1} consists in two cases: subcritical and critical. The subcritical case was shown in \cite{ABR}. In the critical case we will use methods of blow up analysis and compactness of solutions for any $s\in(0,n/2)$ that will be developed in Section \ref{gs4}. In both cases it is essential the positiveness on $\mathbb{S}^n$ of the solution $u$. Such condition leads to the following question: Does the conclusion of Theorem \ref{gt2.1} hold if the solution of \eqref{g1.1} is positive somewhere?
 
To answer the question it is necessary to analyze the following problem
\begin{equation}
\label{1.2}
\begin{cases}
(-\Delta)^s v=|v|^{2^*_s-2}v \text{ in } \mathbb{R}^n,\\
u\in D^{s,2}(\mathbb{R}^n),
\end{cases}
\end{equation}
where $s\in(0,n/2)$, $n>2$, $2^*_s=2n/(n-2s)$ and $D^{s,2}(\mathbb{R}^n)$ denotes the space of real-valued functions $u\in L^{2^*_s}(\mathbb{R}^n)$ whose energy associated to $(-\Delta)^s$ is finite, that is, 
\begin{equation}
\label{1.3}
\|v\|^2_s:=\int_{\mathbb{R}^n}v(-\Delta)^s v~dx =\int_{\mathbb{R}^n} |x|^{2s}|\widehat{v}|^2~dx< +\infty,
\end{equation}
where $~\widehat{ }~$ denotes the Fourier transform. The space $D^{s,2}(\mathbb{R}^n)$ can be also seen as the completion of the space of smooth function with compact support in $\mathbb{R}^n$ under the norm (\ref{1.3}). Note that, if $s=1$, then $(-\Delta)^s = -\Delta$  is the classical Laplacian operator and (\ref{1.3}) becomes
\begin{equation*}
\|v\|^2_1=\int_{\mathbb{R}^n}|\nabla v|^2~dx ,
\end{equation*}
and if $s \in (0,1)$, then 
\begin{equation*}
\|v\|^2_s:=\int_{\mathbb{R}^n}\int_{\mathbb{R}^n}\frac{[v(x)-v(y)]^2}{|x-y|^{n+2s}}dx dy.
\end{equation*}
 As we will see, the problem \eqref{g1.1} contains the problem \eqref{1.2}. In \cite{GNN}, Gidas, Ni and Nirenberg proved that any positive solution of
\begin{equation}
\label{1.4}
-\Delta v = |v|^{\frac{4}{n-2}}v ~\text{ in } \mathbb{R}^n,
\end{equation}
with some decay at infinity, and by Caffarelli, Gidas and Spruck \cite{CGS} without condition at infinity are necessarily of the form
\begin{equation}
\label{1.5}
v(x)=\frac{[n(n-2)a^2]^{\frac{n-2}{4}}}{(a^2+|x-x_0|^2)^{\frac{n-2}{2}}},
\end{equation}
where $a>0$, $x_0\in \mathbb{R}^n$. For some generalizations of this result when the right hand side of \eqref{1.4} is a function well behaved, or $\mathbb{R}^n$ is substituted by $\mathbb{R}^n_+$, see  Damascelli and Gladiali \cite{DG} and references therein. This question also was addressed in a more general setting by Lin \cite{Lin}, Wei and Xu \cite{WX}, by considering the polyharmonic operator when $n>2m$, $m\in \mathbb{N}$. They used the moving plane method to prove that all positive solutions of the problem 
\begin{equation}
\label{1.6}
(-\Delta)^m v= |v|^{\frac{4m}{n-2m}}v ~ \text{ in } \mathbb{R}^n, v\in D^{2,m}(\mathbb{R}^n),
\end{equation}
 take the form
\begin{equation*}
v(x)=\frac{C_{n,m}}{(a^2+|x-x_0|^2)^{\frac{n-2m}{2}}},
\end{equation*}
where $C_{n,m}$ is a constant depending of $n$ and $m$. Later, Guo and Liu \cite{GLp} generalized Wei and Xu's results. 

With the assumption that the solutions have finite energy,
Ding \cite{DW} showed that \eqref{1.4} has an unbounded
sequence of solutions that are different from those given by (\ref{1.5}). Following the same idea of \cite{DW}, Bartsch, Schneider and Weth \cite{BSW} proved the existence of infinitely many sign-changing weak solutions of (\ref{1.6}).

In the fractional setting, for $s\in(0,n/2)$, Chen, Li and Ou \cite{CLO1} showed that the positive solutions of \eqref{1.2} are the form
\begin{equation}\label{fractional}
u(x)=\frac{C_{n,s}}{(a^2+|x-x_0|^2)^{\frac{n-2s}{2}}}.
\end{equation}
After that, Chen, Li and  Yu \cite{CLc, XY3} generalized this result by showing the nonexistence of positive solutions for a class of nonlocal equations. 

However, the existence of sign-changing solutions of \eqref{g1.1} is not yet proven for the case where $s\notin \mathbb{N}$. Some works on the study of the existence of sign-changing solutions for some equations involving the fractional Laplace operator and for fractional Yamabe problems can be referred to in \cite{CW, GMM, Kris, Maa} and references therein. Motivate by the cited papers, we can state our main result about the existence of solutions as follows.

\begin{theorem}
\label{t1.1}
Let $g$ be the standard metric on $\mathbb{S}^n$. If $s\in (0,n/2)$, then there exists an unbounded sequence $\{u_{l}\}_{l\in \mathbb{N}}$ in $H^{s}(\mathbb{S}^n,g)$ of solutions of \eqref{g1.1} provided that $f(\cdot,t)=f(t)$. In particular, the problem \eqref{1.2} has an unbounded sequence $\{v_{l}\}_{l\in \mathbb{N}}$ in $D^{s,2}(\mathbb{R}^n)$ of sign-changing solutions for any $s\in (0,n/2)$.
\end{theorem}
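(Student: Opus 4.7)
The plan is to reformulate \eqref{g1.1} on the standard sphere as an even variational problem, restrict it to the closed subspace of functions invariant under an isometry group whose orbits have positive dimension---thereby recovering compactness at the critical Sobolev exponent---and apply a symmetric minimax theorem to produce infinitely many critical points of diverging energy. The statement on $\mathbb{R}^n$ then follows by stereographic projection and the conformal covariance \eqref{r1.1}. Weak solutions of \eqref{g1.1} with $g=g_{\mathbb{S}^n}$ and $f(\cdot,t)=f(t)$ are precisely the critical points of the even $C^1$ functional
\[
I(u) = \frac{1}{2}\int_{\mathbb{S}^n} u\,P_s u\, d\upsilon_g - \frac{1}{2}\int_{\mathbb{S}^n} R_s u^2\, d\upsilon_g - \int_{\mathbb{S}^n} F(u)\, d\upsilon_g
\]
on $H^s(\mathbb{S}^n, g_{\mathbb{S}^n})$ with norm \eqref{g1.10}; condition $(F_1)$ makes $I$ even, $(F_2)$ makes $I$ of class $C^1$, and $(F_3)$ is the Ambrosetti--Rabinowitz condition that yields bounded Palais--Smale sequences.

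\textbf{Symmetric subspace and compactness.} For $n\ge 3$, fix an integer $k$ with $2\le k\le n-1$ and let $G=O(k)\times O(n+1-k)\subset O(n+1)$ act on $\mathbb{S}^n\subset \mathbb{R}^k\times\mathbb{R}^{n+1-k}$ by block orthogonal transformations. Every $G$-orbit has dimension at least $\min(k-1,n-k)\ge 1$, so a $G$-invariant function descends to a function on a quotient of dimension at most $n-1$ and cannot develop a point bubble of the form \eqref{fractional}. This is quantified by the compact embedding
\[
H^s_G(\mathbb{S}^n):=\{u\in H^s(\mathbb{S}^n,g_{\mathbb{S}^n}) : u\circ\sigma=u \text{ for every }\sigma\in G\}\hookrightarrow L^{2^*_s}(\mathbb{S}^n),
\]
which is the crux of the argument. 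Palais' principle of symmetric criticality (applicable because $P_s$ commutes with isometries) ensures that critical points of $I|_{H^s_G}$ are weak solutions of \eqref{g1.1} on all of $\mathbb{S}^n$; the above compact embedding then gives the Palais--Smale condition for $I|_{H^s_G}$ at every level $c\in\mathbb{R}$.

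\textbf{Fountain theorem and reduction to $\mathbb{R}^n$.} Decomposing $H^s_G$ into finite-dimensional eigenspaces of $L_s^{g_{\mathbb{S}^n}}$ as $\overline{\bigoplus_j X_j}$, and setting $Y_l=\bigoplus_{j\le l}X_j$, $Z_l=\overline{\bigoplus_{j\ge l}X_j}$, condition $(F_3)$ forces $I|_{Y_l}\to-\infty$, while $(F_2)$ together with the spectral gap \eqref{g1.40} gives $\inf\{I(u):u\in Z_l,\,\|u\|_{s,g}=r_l\}\to+\infty$ along suitable radii $r_l\to+\infty$. Bartsch's Fountain theorem then yields an unbounded sequence of critical values $c_l\to+\infty$ and the desired $\{u_l\}\subset H^s(\mathbb{S}^n,g_{\mathbb{S}^n})$. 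For the $\mathbb{R}^n$ assertion, specialize to $f(t)=|t|^{2^*_s-2}t - R_s t$, which meets $(F_1)$--$(F_3)$ because $R_s>0$; the change of variable $v=|J_\mathcal{F}|^{(n-2s)/(2n)}(u\circ\mathcal{F})$ together with \eqref{r1.1} converts each $u_l$ into $v_l\in D^{s,2}(\mathbb{R}^n)$ solving \eqref{1.2}, with $\|v_l\|_s\to+\infty$. Since the classification \eqref{fractional} pins the $D^{s,2}$-energy of every signed solution of \eqref{1.2} to a single universal Aubin--Talenti value, only finitely many $v_l$ can be of one sign, and the remaining ones necessarily change sign.

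\textbf{Main obstacle.} The central difficulty is establishing the compact embedding $H^s_G(\mathbb{S}^n)\hookrightarrow L^{2^*_s}(\mathbb{S}^n)$ in the nonlocal setting: for local (integer) $s$ it follows from a Strauss-type radial decay argument on the symmetric quotient, but $P_s$ is not local and the radial reduction is not direct. I would either lift the problem to the Poincar\'e ball via the Chang--Gonz\'alez extension, where $G$-invariance is inherited and the reduced equation is local and degenerate-elliptic so that classical symmetric Sobolev compactness applies, or prove directly on $\mathbb{S}^n$ that a concentrating $G$-invariant Palais--Smale sequence would force an Aubin--Talenti bubble at every point of a positive-dimensional $G$-orbit, contradicting boundedness in $H^s$. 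Once this compactness is in hand, the remainder is a routine application of even minimax theory.
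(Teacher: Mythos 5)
Your proposal mirrors the paper's architecture almost exactly: pass to the even functional $J$ on $H^s(\mathbb{S}^n,g_{\mathbb{S}^n})$, restrict to the $G$-invariant subspace with $G=O(l)\times O(m)$, $l+m=n+1$, $l,m\geq 2$ (the paper's Example~\ref{gex5.4} is precisely your block group after relabeling), invoke Palais' symmetric criticality and Bartsch--Willem's fountain theorem to get an unbounded sequence of critical values, then push forward by the isometry $\mathcal{P}$ of Lemma~\ref{l2.1} and use the single-energy classification \eqref{fractional} of positive solutions to conclude that all but finitely many $v_l$ change sign. Where you genuinely diverge is on the heart of the matter, the compact embedding $H^s_G(\mathbb{S}^n)\hookrightarrow L^{2^*_s}(\mathbb{S}^n)$. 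You propose either lifting to the Poincar\'e ball via the Chang--Gonz\'alez/Caffarelli--Silvestre extension or arguing by contradiction from a $G$-invariant Palais--Smale concentration. The paper (Lemma~\ref{gl4.3}) instead gives a \emph{direct} fractional Hebey--Vaugon argument on $\mathbb{S}^n$: for $0<s<1$ it rewrites $\|\cdot\|_{s,g_{\mathbb{S}^n}}$ as a Gagliardo double integral via the Pavlov--Samko formula \eqref{g6.15}, works in adapted charts that split orbit and transverse directions, integrates out the orbit variable to gain a dimension reduction (the estimate \eqref{3.9}), and thereby obtains an improved Sobolev exponent $p^*(n,k,s)>2^*_s$; for $s\geq 1$ it bootstraps through iterated Sobolev inclusions (Theorem~\ref{gat1.1}). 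The paper's route is self-contained and quantitative (it explicitly identifies $p^*$), at the price of careful chart manipulation; your extension route would be conceptually cleaner but requires importing the localization machinery and a symmetric Sobolev theory for the degenerate weight $y^{1-2s}$ on the Poincar\'e ball, and your concentration-compactness alternative would still need a decomposition lemma in the fractional setting. One small point worth tightening: in your fountain-theorem step the lower bound on $Z_l$ does not really come from the spectral gap \eqref{g1.40} but from the compact embedding forcing $\mu_l:=\sup\{\|u\|_{L^{2^*_s}}/\|u\|_{s,g}:u\in Z_l\}\to 0$; the gap \eqref{g1.40} is only needed to make $\int u L^g_s u$ an equivalent norm on the constant-free subspace so the quadratic part of $J$ is coercive there.
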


We remark that the equations \eqref{1.2}, \eqref{1.4} and (\ref{1.6}) are invariant under the conformal transformations on $\mathbb{R}^n$. Moreover, for each equation, their respective positive solutions obtained have the same energy.  For $g=g_{\mathbb{S}^n}$, it was shown  in \cite{ABR} that the positive solutions to problem \eqref{g1.1} are constants provided that $f(\cdot,t)=f(t)$ and the function $\tilde{h}(t)=t^{-\frac{n+2s}{n-2s}}[f(t)+R^{g_{\mathbb{S}^n}}_s t]$ is decreasing on $(0,+\infty)$. As a counter part of this result we have:

\begin{corollary}
\label{c1.1}
Let $g$ be the standard metric on $\mathbb{S}^n$ and $s\in(0,n/2)$. Assume that $f(\cdot,t)=f(t)$ is nondecreasing for $s>1$ and the function $ h(t)=t^{-\frac{n+2s}{n-2s}}[f(t)+R^{g
}_s t]$ is decreasing on $(0,+\infty)$ for $s>0$. Then there exists an unbounded sequence $\{u_{l}\}_{l\in \mathbb{N}}$ in $H^{s}(\mathbb{S}^n, g)$ of sign-changing solutions of \eqref{g1.1}.
\end{corollary}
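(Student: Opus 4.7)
The plan is to deduce Corollary \ref{c1.1} by combining Theorem \ref{t1.1} with the uniqueness-of-positive-solutions statement from \cite{ABR} recalled in the paragraph just above the corollary. First I would apply Theorem \ref{t1.1} to $f(\cdot,t)=f(t)$ to obtain an unbounded sequence $\{u_l\}_{l\in\mathbb{N}}\subset H^{s}(\mathbb{S}^n,g)$ of solutions of \eqref{g1.1}; it then remains only to show that all but finitely many of these solutions change sign.

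My first observation would be that any constant solution $u\equiv c$ of \eqref{g1.1} must satisfy $f(c)=0$, because linearity of $P^g_s$ together with $P^g_s(1)=R^g_s$ gives $L^g_s c = c R^g_s - R^g_s c = 0$. Assumption $(F_3)$ forces $tf(t)>0$ for $|t|\geq R$, so every zero of $f$ lies in $[-R,R]$. Consequently the set of constant solutions is bounded in $H^{s}(\mathbb{S}^n,g)$, and since $\{u_l\}$ is unbounded, at most finitely many of its terms can be constants.

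Next I would invoke the \cite{ABR} uniqueness result: under the assumption that $h(t)=t^{-(n+2s)/(n-2s)}(f(t)+R^g_s t)$ is decreasing on $(0,+\infty)$, and with $f$ nondecreasing when $s>1$, every positive solution of \eqref{g1.1} on $(\mathbb{S}^n,g)$ must be a constant. By the oddness $(F_1)$, the function $-u$ also solves \eqref{g1.1} whenever $u$ does, so the same rigidity transfers to negative solutions. Therefore any non-constant solution of \eqref{g1.1} must change sign, and removing the finitely many constant terms of $\{u_l\}$ yields the required unbounded sequence of sign-changing solutions in $H^s(\mathbb{S}^n,g)$.

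I do not anticipate a real obstacle in carrying out this argument, since the two serious inputs, namely the existence of an unbounded sequence (Theorem \ref{t1.1}) and the rigidity of positive solutions under the monotonicity of $h$ (from \cite{ABR}), already absorb all of the heavy work; the remainder is essentially bookkeeping. The step I would verify most carefully is that the monotonicity hypotheses on $h$, and on $f$ itself when $s>1$, imposed in Corollary \ref{c1.1} are exactly those required by the cited uniqueness statement of \cite{ABR}, and that the class of nonlinearities covered by Theorem \ref{t1.1} is compatible with these additional monotonicity assumptions on top of $(F_1)$--$(F_3)$.
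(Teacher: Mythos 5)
Your proposal is correct and follows essentially the same route as the paper's proof, which invokes \cite{ABR} together with the argument given for Corollary \ref{gc1.6}: Theorem \ref{t1.1} supplies the unbounded sequence, the rigidity from \cite{ABR} plus oddness $(F_1)$ forces any non-sign-changing solution to be a constant, and $(F_3)$ excludes constants of large norm. The paper organizes the last step as a contradiction (an infinite positive subsequence of constants would give $f(u_l)u_l=0$ while $u_l\to\infty$, violating $(F_3)$), whereas you note directly that zeros of $f$ lie in $[-R,R]$; the mechanism is the same.
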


The function $f$ considered in Theorem \ref{gt2.1} is a typical example for this case. So, \eqref{g1.1} becomes
\begin{equation}
\label{1.7}
P_s u - R_s u=|u|^{p-1}u-\lambda u ~\text{ in } \mathbb{S}^n,
\end{equation}
where $R_s$ denotes the $s$-curvature on the standard sphere $(\mathbb{S}^n,g_{\mathbb{S}^n})$, $0<\lambda \leq \lambda^*$ and $\lambda^*$ is given by Theorem \ref{gt2.1}. Then by Corollary \ref{c1.1}, Theorem \ref{gt2.1} and its remarks, we have:
\begin{corollary}
 Let $g$ be the standard metric on $\mathbb{S}^n$ and $s\in(0,n/2)$. Assume that $p \leq (n+2)/(n - 2s)$, $\lambda \leq  \lambda^*$ and at least one of these inequalities is strict. Then there exists an unbounded sequence $\{u_{l}\}_{l\in \mathbb{N}}$ in $H^{s}(\mathbb{S}^n,g)$ of sign-changing solutions of \eqref{1.7}.
\end{corollary}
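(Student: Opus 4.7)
The plan is to combine the existence of an unbounded sequence of solutions provided by Theorem \ref{t1.1} with the uniqueness of positive solutions provided by Theorem \ref{gt2.1} (together with its subcritical remark). The key observation is that, once one knows every positive solution of \eqref{1.7} is the constant $\lambda^{1/(p-1)}$, the set of non-sign-changing solutions is a finite set of constants, hence bounded in $H^s(\mathbb{S}^n,g)$; an unbounded sequence of solutions is then forced to contain infinitely many sign-changing ones.

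First I would verify that the nonlinearity $f(t)=|t|^{p-1}t-\lambda t$ satisfies the structural assumptions of Theorem \ref{t1.1}. Oddness $(F_1)$ is immediate; the growth bound $(F_2)$ follows from $|f(t)|\le |t|^p+\lambda|t|\le C(1+|t|^p)$; and the Ambrosetti--Rabinowitz-type condition $(F_3)$ holds with $\mu=p+1>2$, since a direct computation gives $tf(t)-\mu F(t)=\tfrac{(p-1)\lambda}{2}t^2\ge 0$, while $\mu F(t)=t^{p+1}-\tfrac{(p+1)\lambda}{2}t^2>0$ once $|t|\ge R$ with $R$ chosen large. Applying Theorem \ref{t1.1} produces an unbounded sequence $\{u_l\}\subset H^s(\mathbb{S}^n,g)$ of solutions of \eqref{1.7}.

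Next I would use Theorem \ref{gt2.1} to classify the sign-definite solutions. Under the standing hypothesis $p\le (n+2s)/(n-2s)$, $\lambda\le \lambda^*$ with at least one inequality strict, one lies either in the subcritical regime $p<(n+2s)/(n-2s)$ (where the remark following Theorem \ref{gt2.1} extends uniqueness all the way to $\lambda=\lambda^*$) or in the critical regime $p=(n+2s)/(n-2s)$ with $\lambda<\lambda^*$ (where Theorem \ref{gt2.1} applies directly). In both cases, every positive solution is $u\equiv \lambda^{1/(p-1)}$; by oddness of $f$, every negative solution is $u\equiv -\lambda^{1/(p-1)}$; together with $u\equiv 0$, the set of sign-definite solutions has at most three elements and is therefore bounded in $H^s(\mathbb{S}^n,g)$.

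Since $\{u_l\}$ is unbounded while the set of sign-definite solutions is bounded, only finitely many $u_l$ can be sign-definite; discarding them yields the desired unbounded sequence of sign-changing solutions. The only delicate point is purely a matter of bookkeeping the endpoint cases: the ``at least one strict'' clause in the hypothesis is exactly what places us inside the range where either Theorem \ref{gt2.1} or its subcritical remark guarantees uniqueness of positive solutions, so no new analytic input is needed beyond what has already been absorbed into Theorems \ref{t1.1} and \ref{gt2.1} (blow-up analysis, compactness, and the variant Pohozaev identity).
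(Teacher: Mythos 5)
Your strategy matches the paper's in its essentials: apply the main existence theorem (Theorem~\ref{t1.1}, after checking $(F_1)$--$(F_3)$ for $f(t)=|t|^{p-1}t-\lambda t$) to get an unbounded sequence of solutions, and then invoke the uniqueness of positive solutions (Theorem~\ref{gt2.1} for $p$ critical, $\lambda<\lambda^*$, and the remark after it for $p$ subcritical, $\lambda=\lambda^*$) to force sign-changing. Where you diverge is in the last step. The paper formally routes the corollary through Corollary~\ref{c1.1} and the proof of Corollary~\ref{gc1.6}: assume, after passing to a subsequence, that the $u_l$ are positive; by uniqueness each is constant; integrating the equation yields $u_l\,f(u_l)=0$ while unboundedness forces the constants $u_l\to\infty$, contradicting the superlinear bound $tf(t)\ge a_1 t^\mu -a_2$. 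That longer route is what one needs for the more general $f$ covered by Corollary~\ref{c1.1}; for the explicit $f$ at hand your shortcut -- the set of sign-definite solutions is $\{0,\pm\lambda^{1/(p-1)}\}$, finite hence bounded, so an unbounded sequence of solutions is forced to contain an unbounded sign-changing subsequence -- is tighter and entirely adequate. Two bookkeeping remarks. First, both your argument and the paper's implicitly use that a nonnegative nontrivial solution is in fact strictly positive before Theorem~\ref{gt2.1} can be invoked; this follows from writing the equation as $P_s u = u^p + (R_s-\lambda)u$ with $R_s-\lambda>0$ (recall $\lambda^*=R_s$ on the round sphere) and the positivity of the Green's function of $P_s$, and it is worth saying so explicitly. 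Second, the paper's statement carries a typo ``$p\le (n+2)/(n-2s)$''; you are correctly reading it as the critical exponent $(n+2s)/(n-2s)$.
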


The proof of our above results consists in use some standard variational techniques combined with the symmetries of $\mathbb{S}^n$, which plays an important role because it permits us to reduce a problem on $\mathbb{R}^n$ to an equivalent problem on the standard unit sphere $\mathbb{S}^n$, and vice versa. 

Using all the arguments mentioned jointly with the fountain theorem \cite{BW}, and the principle of symmetric criticality \cite{P1}, we will generalize Theorem \ref{t1.1} for the case $g\in [g_{\mathbb{S}^n}]$. For this purpose, we introduce some notations. Given a unit sphere $(\mathbb{S}^n, g)$ which is conformally equivalent to the standard unit sphere, we denote by $Isom_g(\mathbb{S}^n)$ the group of isometries of $(\mathbb{S}^n,g)$. It is well known 
  that $Isom_g(\mathbb{S}^n)$ is a compact Lie group and its action is differentiable on $\mathbb{S}^n$. Moreover, the compact Lie group $O(n+1)$ acts isometrically on $\mathbb{S}^n$. For a closed subgroup $G$ of $Isom_g(\mathbb{S}^n)$, we set
\begin{equation*}
H^s_G(\mathbb{S}^n,g):=\{ u\in H^s(\mathbb{S}^n,g);~ \forall \sigma\in G, \sigma u = u\}.
\end{equation*}
For $x\in \mathbb{S}^n$, we denote the orbit of $x$ under the action of $G$ by
\begin{equation}
O^x_G =: \{ \sigma (x); ~\sigma\in G\},
\end{equation}
and the isotropy group of $x$ by
\begin{equation}
S^x_G =: \{ \sigma\in G;~ \sigma(x)=x\}.
\end{equation}

Now we state our following result that generalizes the Theorem \ref{t1.1}. Furthermore, such result implies results on the existence of infinite different solutions for some fractional or non-local problems on $\mathbb{R}^n$.

\begin{theorem}
\label{gt1.5}
Let $G\subset O(n+1)$ be a closed subgroup with $k=\min_{x\in \mathbb{S}^n}\dim O^x_G\geq 1$ and let $g=\varphi^{\frac{4}{n-2s}}g_{\mathbb{S}^n}$ be a conformal metric on $\mathbb{S}^n$ with $0<\varphi\in C^{\infty}(\mathbb{S}^n)$ and $s \in (0,n/2)$. If $\dim H^s_G(\mathbb{S}^n,g)=\infty$, $\varphi$ is $G$-invariant and $f$ is $G$-invariant on the $\zeta$-variable, then there exists an unbounded sequence $\{u_{l}\}_{l\in \mathbb{N}}$ in $H^{s}(\mathbb{S}^n,g)$ of solutions of \eqref{g1.1}.
\end{theorem}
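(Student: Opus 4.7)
The plan is to combine a variational argument on the $G$-invariant subspace $H^s_G(\mathbb{S}^n,g)$ with the principle of symmetric criticality of Palais \cite{P1} and the fountain theorem \cite{BW}. The conformal covariance \eqref{r1.1} and the $G$-invariance of $\varphi$ ensure that the conformal metric $g$, the $s$-curvature $R^g_s$, and the operator $P^g_s$ are all $G$-equivariant, which will be the backbone of the symmetry reduction.

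First I would introduce the energy functional
\begin{equation*}
I_g(u)=\frac{1}{2}\int_{\mathbb{S}^n} u\bigl(P^g_s u - R^g_s u\bigr)\,d\upsilon_g - \int_{\mathbb{S}^n} F(\zeta,u)\,d\upsilon_g, \qquad u\in H^s(\mathbb{S}^n,g),
\end{equation*}
which is of class $C^1$ by $(F_2)$ and whose critical points are weak solutions of \eqref{g1.1}. The $G$-invariance of $\varphi$ (hence of $g$, $R^g_s$, $d\upsilon_g$, and $P^g_s$) together with the $G$-invariance of $f$ in the $\zeta$-variable makes $I_g$ invariant under the orthogonal $G$-action on $H^s(\mathbb{S}^n,g)$. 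Palais' principle then reduces the problem to finding critical points of the restricted functional $I_g|_{H^s_G(\mathbb{S}^n,g)}$, and the hypothesis $\dim H^s_G(\mathbb{S}^n,g)=\infty$ provides the ambient space needed for an infinite multiplicity result.

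The heart of the argument is the compactness of the embedding
\begin{equation*}
H^s_G(\mathbb{S}^n,g)\hookrightarrow L^{p+1}(\mathbb{S}^n,g), \qquad 1<p\le \frac{n+2s}{n-2s},
\end{equation*}
which I expect to be the main obstacle. The assumption $k=\min_{x\in\mathbb{S}^n}\dim O^x_G\ge 1$ implies that every $G$-invariant function is constant along orbits of dimension at least $k$, effectively reducing the problem to an $(n-k)$-dimensional one. Following the Hebey-Vaugon strategy and its nonlocal analogues, the critical Sobolev exponent on the symmetric subspace is lifted to $2(n-k)/(n-k-2s)$ when $n-k>2s$ (or the embedding becomes compact for every exponent when $n-k\le 2s$), which strictly exceeds $2^*_s$. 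This requires a careful orbit decomposition adapted to the pseudo-differential operator $P^g_s$, using either the extension characterization of Caffarelli-Silvestre or the spectral representation via the intertwining property \eqref{r1.1}.

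With this compactness at hand, the fountain theorem is set up as follows. Using an $L^2$-orthonormal basis $\{e_j\}_{j\in\mathbb{N}}$ of $H^s_G(\mathbb{S}^n,g)$ of eigenfunctions of $P^g_s$, define $Y_m=\mathrm{span}\{e_1,\dots,e_m\}$ and $Z_m=\overline{\mathrm{span}\{e_j\}_{j\ge m}}$. The oddness of $I_g$ follows from $(F_1)$; the Ambrosetti-Rabinowitz condition $(F_3)$ yields $F(\zeta,t)\ge c_1|t|^\mu-c_2$ with $\mu>2$, so $I_g\to -\infty$ on bounded spheres of the finite-dimensional $Y_m$; on $Z_m$, the growth bound $(F_2)$ combined with the above compact embedding produces radii $r_m\to\infty$ with $\inf_{u\in Z_m,\,\|u\|=r_m}I_g(u)\to+\infty$. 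Finally, $(F_3)$ together with the compact embedding yields the Palais-Smale condition on $H^s_G$: boundedness of PS sequences from the standard trick, and strong convergence of the nonlinear term from the compact embedding. The fountain theorem then furnishes an unbounded sequence of critical values of $I_g|_{H^s_G}$, and by symmetric criticality an unbounded sequence $\{u_l\}\subset H^s(\mathbb{S}^n,g)$ of solutions of \eqref{g1.1}.
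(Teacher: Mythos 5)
Your proposal is correct and follows essentially the same route as the paper: the energy functional $J$ from \eqref{ge4.1}, the $G$-invariance obtained from the $G$-invariance of $\varphi$ and $f$ (Lemma \ref{gl5.10}), Palais' symmetric criticality, the Hebey--Vaugon style compact embedding $H^s_G(\mathbb{S}^n,g)\hookrightarrow L^p(\mathbb{S}^n,g)$ up to and beyond the critical exponent $2^*_s$ (Lemma \ref{gl4.3}), and the fountain theorem (Lemma \ref{gl4.1}). The only cosmetic divergence is that where you suggest a Caffarelli--Silvestre extension as one possible technical device for the orbit-decomposition compactness, the paper instead works directly on $\mathbb{S}^n$ via the Pavlov--Samko singular-integral representation \eqref{g6.15} for $s\in(0,1)$ and an Aubin-style iteration through integer-order Sobolev spaces for $s\ge 1$.
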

 
The condition on the dimension of $H^s_G(\mathbb{S}^n,g)$ is essential in the proof of the above theorem. However, depending on the choice of the subgroup $G$ and the metric $g$, one can show such a condition (see Examples \ref{gex5.4}, \ref{gex5.5} and \ref{gex5.6}).  
 
\begin{corollary}
\label{gc1.6} With the same assumptions as in Theorem \ref{gt1.5} and Theorem \ref{gt2.1}. Then there exists some $\lambda^*=\lambda^*(n,s,g,\mathbb{S}^n)>0$ such that \eqref{g1.1}, with $f(\cdot,t)=|t|^{p-1}t-\lambda t$, has an unbounded sequence $\{u_{l}\}_{l\in \mathbb{N}}$ in $H^{s}(\mathbb{S}^n,g)$ of sing-changing solutions  for each $0<\lambda<\lambda^*$.
\end{corollary}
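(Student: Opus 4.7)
The plan is to combine the multiplicity result of Theorem \ref{gt1.5} with the uniqueness classification of Theorem \ref{gt2.1}. First I would verify that the nonlinearity $f(\zeta,t)=|t|^{p-1}t-\lambda t$ meets every hypothesis needed to apply Theorem \ref{gt1.5}. Condition $(F_1)$ is immediate because $f$ is odd in $t$; $(F_2)$ follows from the assumption $p\in(1,(n+2s)/(n-2s)]$; and $(F_3)$ holds by choosing any $\mu\in(2,p+1)$ and $R$ large enough so that the superlinear term $|t|^{p-1}t$ dominates the linear correction $-\lambda t$. Because $f$ does not depend on $\zeta$, the $G$-invariance in the $\zeta$-variable required by Theorem \ref{gt1.5} is automatic. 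The structural assumptions on $G$, on the conformal factor $\varphi$, and on $\dim H^s_G(\mathbb{S}^n,g)=\infty$ are inherited verbatim from Theorem \ref{gt1.5}. Thus Theorem \ref{gt1.5} produces an unbounded sequence $\{u_{l}\}_{l\in\mathbb{N}}\subset H^s(\mathbb{S}^n,g)$ of solutions of \eqref{g1.1} with $f(\cdot,t)=|t|^{p-1}t-\lambda t$.

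Next I would take $\lambda^{\ast}=\lambda^{\ast}(n,s,g,\mathbb{S}^n)>0$ to be exactly the constant furnished by Theorem \ref{gt2.1}, and fix any $\lambda\in(0,\lambda^{\ast})$. By Theorem \ref{gt2.1} the only positive solution of \eqref{g1.1} is the constant $u\equiv\lambda^{1/(p-1)}$. Since $f$ is odd in $t$, if $u$ is a negative solution then $-u$ is a positive one, so the only strictly negative solution is $u\equiv-\lambda^{1/(p-1)}$. Hence every solution other than these two constants must attain both positive and negative values, that is, must change sign.

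Finally, since $\|u_{l}\|_{s,g}\to\infty$ while the two constant solutions $\pm\lambda^{1/(p-1)}$ have fixed finite $H^s$-norm, there exists $l_{0}\in\mathbb{N}$ such that $u_{l}\notin\{\lambda^{1/(p-1)},-\lambda^{1/(p-1)}\}$ for every $l\geq l_{0}$. The subsequence $\{u_{l}\}_{l\geq l_{0}}$ then consists entirely of sign-changing solutions of \eqref{g1.1} and remains unbounded in $H^s(\mathbb{S}^n,g)$, which yields the corollary.

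The main subtlety I would watch out for is the logical step ``not identically one of the two constant solutions'' $\Rightarrow$ ``sign-changing''. This passage hinges on the fact that the only \emph{positive} solution in the strict sense is the constant, so any solution which is nonnegative and nontrivial is necessarily strictly positive (by a strong maximum principle for $P^g_s-R^g_s$, or by the regularity theory developed in Section \ref{gs4}) and must therefore coincide with $\lambda^{1/(p-1)}$; the analogous argument rules out a nonpositive nontrivial noncontant solution. Once this is justified, the rest is just comparing norms.
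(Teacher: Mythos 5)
Your proposal is correct and follows essentially the same route as the paper: apply Theorem \ref{gt1.5} to produce an unbounded sequence of solutions, then invoke Theorem \ref{gt2.1} to show that only finitely many of them can avoid changing sign. The one genuine difference is your closing contradiction, which is slightly cleaner than the paper's. You observe that if a solution in the tail of the sequence did not change sign, it would (up to sign, by oddness of $f$) be the constant $\lambda^{1/(p-1)}$, whose $H^s$-norm is a fixed finite number, contradicting $\|u_l\|_{s,g}\to\infty$. The paper instead passes to a subsequence of non-sign-changing (hence, by Theorem \ref{gt2.1}, constant) solutions, integrates the equation against $u_l$ to deduce $u_l f(u_l)=0$ and $u_l\to\infty$, and then contradicts the superlinearity bound $u_l f(u_l)\ge a_1 u_l^\mu - a_2 > 0$. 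Both arguments rest on the same two theorems; yours just avoids re-deriving the superlinearity estimate.

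You were right to flag the one delicate point: the inference from ``$u$ does not change sign and is nonconstant'' to ``$u$ changes sign'' requires knowing that a nonnegative, nontrivial solution is strictly positive, so that Theorem \ref{gt2.1} (which is stated for \emph{positive} solutions) applies. You gesture at a strong maximum principle for $P^g_s - R^g_s$ or at the regularity theory of Section \ref{gs4}, but you do not actually supply the argument. The paper has exactly the same implicit gap (it asserts without comment that, after passing to a subsequence, one may assume $u_l$ is ``positive''), so this does not distinguish your proof from the paper's; but for a fully rigorous write-up one would want to cite or prove the appropriate strong maximum principle for $P^g_s$ across the whole range $s\in(0,n/2)$, including the higher-order cases where the classical maximum principle is not automatic.
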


The proof of Theorem \ref{gt1.5} will be carried out in Section \ref{gs5} and is based on the arguments developed by Ding \cite{DW} for the case of the round sphere, and  by Hebey and Vaugon \cite{HV}.

\section{A problem on $\mathbb{R}^n$ equivalent to \eqref{g1.1}} \label{sg2}

Let $\mathcal{F}$ be the inverse of the stereographic projection with $S\in \mathbb{S}^n$ being the south pole. We recall that $\mathcal{F}$ is a conformal diffeomorphism. More precisely, if $g_{\mathbb{R}^n}$ denotes the flat Euclidean metric on $\mathbb{R}^n$, then the pullback of $g_{\mathbb{S}^n}$ to $\mathbb{R}^n$ satisfies
\begin{equation*}
\mathcal{F}^* g_{\mathbb{S}^n}=\frac{4}{(1+|\cdot|^2)^2}g_{\mathbb{R}^n}.
\end{equation*}
Moreover, the corresponding volume element is given by
\begin{equation}
\label{2.1}
d\upsilon_{g_{\mathbb{S}^n}}=\left(\frac{2}{1+|y|^2}\right)^n~dy.
\end{equation}
For a function $v:\mathbb{S}^n \rightarrow \mathbb{R}$, we may define
\begin{equation*}
\mathcal{P}v:\mathbb{R}^n\rightarrow \mathbb{R},~(\mathcal{P}v)(y):=\xi(y)  v(\mathcal{F}(y)),
\end{equation*}
where
\begin{equation*}
\xi(y)=\left( \frac{2}{1+|y|^2}\right)^{\frac{n-2s}{2}}.
\end{equation*}
From (\ref{2.1}), it is easy to see that $\mathcal{P}$ defines an isometric isomorphism between $L^{2^*_s}(\mathbb{S}^n,g_{\mathbb{S}^n})$ and $L^{2^*_s}(\mathbb{R}^n, g_{\mathbb{R}^n})$. Denote by $H^s(\mathbb{S}^n)$ the completion of the space of smooth functions $C^{\infty}(\mathbb{S}^n)$ under the norm $\| . \|_{*}$ induced by the scalar product 
\begin{equation}
\label{2.2}
\langle v,w\rangle_{*}:=\langle \mathcal{P}v,\mathcal{P}w \rangle_{s}, ~v.w\in C^{\infty}(\mathbb{S}^n),
\end{equation}
where $\langle \cdot, \cdot\rangle_s$ is defined by (\ref{1.3}). Hence, by construction, $\mathcal{P}$ is also an isometric isomorphism between $(H^s(\mathbb{S}^n), \| \cdot \|_{*})$ and $(D^{s,2}(\mathbb{R}^n), \| \cdot \|_s)$.

We note that $\langle \cdot,\cdot \rangle_{*}$ is the quadratic form of a unique positive self adjoint
operator in $L^2(\mathbb{S}^n,g_{\mathbb{S}^n})$ which coincides with $P_s$, $H^s(\mathbb{S}^n)=H^s(\mathbb{S}, g_{\mathbb{S}^n})$,
\begin{equation}
\label{2.3}
\|v\|^2_*=\int_{\mathbb{S}^n}vP_sv~d\upsilon_{g_{\mathbb{S}^n}}~\text{ for all } v\in H^s(\mathbb{S}^n)
\end{equation} 
and 
\begin{equation*}
(P_sv)\circ \mathcal{F}=\xi^{-\frac{n+2s}{n-2s}} (-\Delta)^s (\mathcal{P}v)~\text{ for all } v\in C^{\infty}(\mathbb{S}^n).
\end{equation*}
By (\ref{r1.1}) we have
\begin{equation}
\label{g2.3}
(P^g_s u)\circ \mathcal{F} = (\mathcal{P}\varphi)^{-\frac{n+2s}{n-2s}}(-\Delta)^s(\mathcal{P} (\varphi u)) \text{ for all } \varphi, u\in C^{\infty}(\mathbb{S}^n) \text{ with } g=\varphi^{\frac{4}{n-2s}}g_{\mathbb{S}^n}.
\end{equation}

 Consider the problem:
\begin{equation}
\label{2.4}
\begin{cases}
(-\Delta)^s v= [\xi (\varphi \circ \mathcal{F})]^{\frac{n+2s}{n-2s}}\left( f\left(\cdot, \frac{v}{\xi (\varphi \circ \mathcal{F})}\right) + \tilde{R}^g_s \frac{v}{\xi (\varphi \circ \mathcal{F})} \right)\text{ in } \mathbb{R}^n,\\
v\in D^{s,2}(\mathbb{R}^n),~0<s<n/2,
\end{cases}
\end{equation}
where $0<\varphi \in C^{\infty}(\mathbb{S}^n)$, $\tilde{R}^g_s= R^g_s\circ \mathcal{F}$ and $ R^g_s$ is the $s$-curvature of $\mathbb{S}^n$ with $g = \varphi^{4/(n-2s)}g_{\mathbb{S}^n}$.
The following lemma constitutes the bridge between \eqref{g1.1} and (\ref{2.4}). From now on, solution means a solution in the weak sense.
\begin{lemma}
\label{l2.1}
Let $g=\varphi^{4/(n-2s)}g_{\mathbb{S}^n}$ be a conformal metric on $\mathbb{S}^n$ where $0<\varphi\in C^{\infty}(\mathbb{S}^n)$ and $s\in (0,n/2)$. Then every solution $u\in H^s(\mathbb{S}^n,g)$ of \eqref{g1.1} corresponds to a solution $v\in D^{s,2}(\mathbb{R}^n)$ of (\ref{2.4}) and
\begin{equation}
\label{2.6}
\|v\|_{s}=\|u\|_{s,g}.
\end{equation}
\end{lemma}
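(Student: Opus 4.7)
The plan is to exhibit an explicit norm-preserving bijection between solutions of \eqref{g1.1} and solutions of \eqref{2.4}, given by multiplication by $\varphi$ followed by the isometry $\mathcal{P}$. Concretely, for $u\in H^s(\mathbb{S}^n,g)$ solving \eqref{g1.1}, I would set
$$v := \mathcal{P}(\varphi u), \qquad \text{that is,} \qquad v(y) = \xi(y)\,\varphi(\mathcal{F}(y))\,u(\mathcal{F}(y)).$$
The norm identity (verified below) already shows $\varphi u \in H^s(\mathbb{S}^n)$, so the isometry property of $\mathcal{P}$ forces $v \in D^{s,2}(\mathbb{R}^n)$.

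To see that $v$ satisfies \eqref{2.4}, I would invoke the identity \eqref{g2.3}, which rewrites as
$$(-\Delta)^s v \;=\; (-\Delta)^s\,\mathcal{P}(\varphi u) \;=\; (\mathcal{P}\varphi)^{\frac{n+2s}{n-2s}}\,(P^g_s u)\circ \mathcal{F}.$$
Since $u$ solves \eqref{g1.1}, composing with $\mathcal{F}$ gives $(P^g_s u)\circ \mathcal{F} = f(\mathcal{F}(\cdot), u\circ\mathcal{F}) + \tilde{R}^g_s\,(u\circ\mathcal{F})$. Moreover, directly from the definitions $\mathcal{P}\varphi = \xi(\varphi\circ\mathcal{F})$ and $v = \mathcal{P}(\varphi u) = (\mathcal{P}\varphi)(u\circ\mathcal{F})$, so $u\circ\mathcal{F} = v/[\xi(\varphi\circ\mathcal{F})]$. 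Substituting this into the previous display yields \eqref{2.4} line by line.

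For the norm equality, I would combine the conformal transformation rule \eqref{r1.1} (with $\rho=\varphi$) with $d\upsilon_g = \varphi^{2n/(n-2s)}\,d\upsilon_{g_{\mathbb{S}^n}}$. The key exponent cancellation $-\tfrac{n+2s}{n-2s} + \tfrac{2n}{n-2s} = 1$ produces
$$\|u\|_{s,g}^2 \;=\; \int_{\mathbb{S}^n} u\,P^g_s u\,d\upsilon_g \;=\; \int_{\mathbb{S}^n} (\varphi u)\,P_s(\varphi u)\,d\upsilon_{g_{\mathbb{S}^n}} \;=\; \|\varphi u\|_*^2 \;=\; \|\mathcal{P}(\varphi u)\|_s^2 \;=\; \|v\|_s^2,$$
where the last two equalities use \eqref{2.2}--\eqref{2.3} and the isometry of $\mathcal{P}$. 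The converse direction is obtained by running these same identities in reverse: given $v\in D^{s,2}(\mathbb{R}^n)$ solving \eqref{2.4}, setting $u := \varphi^{-1}\mathcal{P}^{-1}v \in H^s(\mathbb{S}^n,g)$ recovers a solution of \eqref{g1.1} with the same energy. No step is conceptually difficult; the main care is in exponent bookkeeping, together with interpreting the computations in the weak sense, since $u$ and $v$ are merely in $H^s(\mathbb{S}^n,g)$ and $D^{s,2}(\mathbb{R}^n)$ respectively. Smoothness of $\varphi$ and its uniform positivity on the compact manifold $\mathbb{S}^n$ make the multiplication $u\mapsto \varphi u$ a topological isomorphism between the relevant Sobolev spaces, which legitimizes the full correspondence at the level of weak solutions.
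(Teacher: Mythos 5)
Your proposal is correct and follows essentially the same route as the paper: set $v=\mathcal{P}(\varphi u)$, use the conformal covariance \eqref{r1.1}/\eqref{g2.3} together with the exponent cancellation $-\tfrac{n+2s}{n-2s}+\tfrac{2n}{n-2s}=1$ and the isometry of $\mathcal{P}$ to get \eqref{2.6}, and transfer the equation from the sphere to $\mathbb{R}^n$. The only presentational difference is that the paper makes your ``interpret everything weakly'' remark precise by testing against an arbitrary $w\in D^{s,2}(\mathbb{R}^n)$ and using $\mathcal{P}^{-1}w$ as the test function on $(\mathbb{S}^n,g)$, whereas you argue pointwise via \eqref{g2.3} and then appeal to density and the isomorphism $u\mapsto\varphi u$; this is the standard way to justify the same computation and does not constitute a gap.
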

\begin{proof}
Let $u\in H^s(\mathbb{S}^n,g)$ be a solution of \eqref{g1.1}. Define $v(y)=\mathcal{P}( \varphi u)(y)$, $y\in \mathbb{R}^n$. We shall prove first that $v\in D^{s,2}(\mathbb{R}^n)$. By (\ref{1.3}) and (\ref{2.6}), we have
\begin{align*}
\|v\|^2_s  = \int_{\mathbb{S}^n}uP^g_su~d\upsilon_g < +\infty.
\end{align*}
By the isometry of $\mathcal{P}$, Sobolev inequality in $\mathbb{S}^n$ (see \cite{Bec, CT}) and (\ref{r1.1}),
\begin{align*}
\int_{\mathbb{R}^n}|v|^{2^*_s}dy  = \int_{\mathbb{S}^n}|\varphi u|^{2^*_s}~d\upsilon_{g_{\mathbb{S}^n}}
	 \leq C(n,s) \int_{\mathbb{S}^n}u P_s u~d\upsilon_g<+\infty.
\end{align*}
Thus $v\in D^{s,2}(\mathbb{R}^n)$ and (\ref{2.6}) follows.

We remark that the volume element for $g$ is given by
 \begin{equation*}
d\upsilon_g = (\xi (\varphi \circ \mathcal{F}))^{\frac{2n}{n-2s}}dy.
\end{equation*}
Now, we will prove that $u$ is a solution of (\ref{2.4}). Let $w\in D^{s,2}(\mathbb{R}^n)$. Then
\begin{equation*}
 \mathcal{P}^{-1}w=\frac{w\circ \mathcal{F}^{-1}}{(\xi \circ \mathcal{F}^{-1}) \varphi} \in H^s(\mathbb{S}^n, g),
\end{equation*}
 and 
\begin{align*}
& \int_{\mathbb{R}^n} [\xi (\varphi \circ \mathcal{F})]^{\frac{n+2s}{n-2s}}\left( f\left( y, \frac{v}{\xi (\varphi \circ \mathcal{F})}\right) + R^g_s \frac{v}{\xi (\varphi \circ \mathcal{F})} \right) w~dy\\
 &  =  \int_{\mathbb{R}^n} [\xi (\varphi \circ \mathcal{F})]^{\frac{n+2s}{n-2s}}\left( f\left( y, u \circ \mathcal{F}\right) + \tilde{R}^g_s (u\circ \mathcal{F}) \right) w~dy\\
	& = \int_{\mathbb{S}^n}\varphi^{-1}(f(\zeta, u) + R^g_s u )\left(\frac{w}{\xi}\circ \mathcal{F}^{-1}\right)d\upsilon_g\\
	& = \int_{\mathbb{S}^n}(f(\zeta,u) + R^g_s u ) \mathcal{P}^{-1}w~d\upsilon_g\\
	& = \langle u, \mathcal{P}^{-1}w\rangle_{s,g}=\langle \varphi u,\varphi\mathcal{P}^{-1}w  \rangle_{s,g_{\mathbb{S}^n}}\\
	& =\langle v,w  \rangle_s,
\end{align*}
where in the last two equalities we used (\ref{r1.1}), (\ref{2.3}) and (\ref{2.2}).
\end{proof}

\section{Constant solutions for a critical equation}\label{gs4}

In this section we will study the following problem
\begin{equation}
\label{pc4.1}
L^g_s u = u^{\frac{n+2s}{n-2s}}-\lambda u, ~u> 0 \text{ in } \mathbb{S}^n, 
\end{equation} 
where $g=\varphi^{4/(n-2s)}g_{\mathbb{S}^n}$, $0<\varphi\in C^{\infty}(\mathbb{S}^n)$, $s\in (0,n/2)$ and $\lambda$ is a positive constant. 
Using arguments based on Jin, Li and Xiong \cite{JLX1, JLX} about the blow up analysis and compactness, we will establish some conditions on $\lambda$ and $R^g_s$ such that (\ref{pc4.1}) has only the constant solution $u\equiv \lambda^{(n-2s)/(4s)}$.

In the case $s=1$, the compactness of solutions  of (\ref{pc4.1}) was studied by Li and Zhu \cite{LZ}  for $n=3$ and by Hebey \cite{EH2} for $n\geq 4$. Recently, similar results on compactness were obtained by Niu, Peng and Xiong \cite{NPX} for $s\in (0,1)$ with $s<n/4$.

In order to prove Theorem \ref{gt2.1}, and in view of Lemma \ref{l2.1}, we will extend the results in \cite{NPX} for $s\in (0,n/2)$ using some arguments based on analysis of isolated blow up points of solutions for the following problem
\begin{equation*}
\label{pc4.2}
(-\Delta)^s v= v^{\frac{n+2s}{n-2s}} + a v,~ v>0 \text{ in } \mathbb{R}^n,  
\end{equation*}
where $a$ is a positive smooth function on $\mathbb{R}^n$. The above problem is equivalent (up to constants) to the nonlinear integral problem
\begin{equation}
\label{pc4.3}
v(x)=\int_{\mathbb{R}^n}\frac{v(y)^{\frac{n+2s}{n-2s}}+a(y)v(y)}{|x-y|^{n-2s}}dy, ~v>0~\text{ in } \mathbb{R}^n.
\end{equation}
Thus, our main result in this section can be stated as follows.
\begin{lemma}
\label{gl8.1}
If $s\in (0,n/2)$ and $R^g_s$ is positive in $\mathbb{S}^n$, then there exist  constants positives $\tilde{\lambda}$ and $\tilde{C}$, depending only on $n$, $s$, $\inf_{\mathbb{S}^n} R^g_s$ and an upper bound of $\|R\|_{C^2(\mathbb{S}^n)}$, such that for $0<\lambda<\tilde{\lambda}$,  any solution $u$ of \eqref{pc4.1} satisfies
\begin{equation*}
\|u\|_{C^{2s}(\mathbb{S}^n)}\leq \tilde{C}.
\end{equation*}
\end{lemma}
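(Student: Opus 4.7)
The argument proceeds by contradiction via a blow-up analysis, concluded by a Pohozaev-type obstruction. Suppose the lemma fails; then there exist sequences $\lambda_k \in (0,\tilde{\lambda})$ and smooth positive solutions $u_k$ of \eqref{pc4.1} with $\|u_k\|_{L^\infty(\mathbb{S}^n)} \to \infty$. Passing to a subsequence we may assume $\lambda_k \to \lambda_\infty \in [0,\tilde{\lambda}]$.

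First I would transport the problem to $\mathbb{R}^n$ via Lemma \ref{l2.1}. The functions $v_k = \mathcal{P}(\varphi u_k) \in D^{s,2}(\mathbb{R}^n)$ satisfy
\[
(-\Delta)^s v_k \;=\; v_k^{(n+2s)/(n-2s)} \,+\, a_k\, v_k \quad \text{in } \mathbb{R}^n,
\]
with $a_k = (\tilde{R}^g_s - \lambda_k)(\xi(\varphi\circ\mathcal{F}))^{4s/(n-2s)}$, a smooth bounded coefficient whose size is controlled by $\|R^g_s\|_{C^2(\mathbb{S}^n)}$ and the smoothness of $\varphi$. By post-composing $u_k$ with a suitable isometry of $\mathbb{S}^n$, any blow-up point of $u_k$ can be placed away from the pole of $\mathcal{F}$, so blow-up of $u_k$ is equivalent to blow-up of $v_k$ on a fixed compact subset of $\mathbb{R}^n$.

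Next I would develop the blow-up analysis following the framework of Jin--Li--Xiong \cite{JLX1, JLX} and Niu--Peng--Xiong \cite{NPX}, extended to arbitrary $s\in(0,n/2)$. One selects local maxima $y_k$ with $M_k := v_k(y_k) \to \infty$ and rescales $\tilde{v}_k(y) := M_k^{-1} v_k(y_k + M_k^{-2/(n-2s)} y)$. Using the integral reformulation \eqref{pc4.3} obtained by convolving with the Riesz kernel, together with the regularity in \cite[Theorems 2.4 and 2.5]{JLX}, the sequence $\tilde{v}_k$ converges in $C^{2s}_{\mathrm{loc}}(\mathbb{R}^n)$ to a positive solution $V_\infty$ of $(-\Delta)^s V_\infty = V_\infty^{(n+2s)/(n-2s)}$; the rescaling kills the lower-order term $a_k v_k$ since $\lambda_k M_k^{-4s/(n-2s)} \to 0$ and $a_k$ stays bounded. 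The classification of Chen--Li--Ou \cite{CLO1}, recalled in \eqref{fractional}, forces $V_\infty$ to be a standard bubble $U_{a,x_0}$. Iterating this selection shows that blow-up points are discrete, and the standard monotonicity of the spherical average of $v_k$ centered at each blow-up point upgrades them to isolated simple blow-up points.

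The last step, which I expect to be the main obstacle, is to derive a contradiction via a Pohozaev-type identity. Around an isolated simple blow-up point $y_0$, one decomposes $v_k$ as a rescaled bubble plus a regular remainder whose leading order encodes a ``mass'' term. For general $s\in(0,n/2)$ the required tool is a fractional Pohozaev identity (of Ros-Oton--Serra type, as alluded to in the abstract), which on a ball $B_R$ takes the schematic form
\[
c_n \int_{B_R} a_k v_k^2\,dy \,+\, c_n' \int_{B_R} (y\cdot\nabla a_k)\, v_k^2\,dy \;=\; \mathcal{B}_R(v_k),
\]
where $\mathcal{B}_R(v_k)$ is a nonlocal boundary contribution that, after inserting the bubble ansatz and passing to the limit $k\to\infty$, stays nonpositive. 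On the other hand, the dominant term on the left-hand side is proportional to $a_k(y_0) = (\tilde{R}^g_s(y_0) - \lambda_k)(\xi(\varphi\circ\mathcal{F}))^{4s/(n-2s)}(y_0)$, which is strictly positive once $\tilde{\lambda} < \inf_{\mathbb{S}^n} R^g_s$. This forces a contradiction, ruling out blow-up. Combining the resulting uniform $L^\infty$ bound with the bootstrap in \cite[Theorem 2.5]{JLX} upgrades it to the claimed $C^{2s}(\mathbb{S}^n)$ estimate.
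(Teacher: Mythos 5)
Your overall strategy (contradiction, transfer to $\mathbb{R}^n$, blow-up analysis with isolated simple blow-up points, then a Pohozaev-type obstruction) is the same as the paper's, but the decisive step — the Pohozaev contradiction — has a genuine gap. First, the tool you invoke, a Ros-Oton--Serra type identity for $(-\Delta)^s$, is only available for $s\in(0,1)$, while the lemma covers all $s\in(0,n/2)$; the paper instead proves Pohozaev identities directly for the Riesz-potential (integral) formulation \eqref{pc4.3} (Proposition \ref{gp3.7}), which is exactly what makes the argument work for $s\geq 1$. Second, and more seriously, your contradiction mechanism (``the dominant term on the left is proportional to $a_k(y_0)>0$, the boundary contribution stays nonpositive'') is too crude and in fact fails in the regime $n<4s$. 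In the paper the contradiction is a rate comparison after multiplying the identity by $v_i(0)^2$: by the lower bound \eqref{g3.31}, $v_i(0)^2\int_{B_\delta}a_i v_i^2\to\infty$ when $4s\leq n$, while the terms coming from the tail $h_i$ and the boundary remain bounded by $C_\delta$ (Proposition \ref{gp3.10}, Corollary \ref{gc3.11}); positivity of $a_i$, i.e.\ $\lambda<\inf_{\mathbb{S}^n}R^g_s$, only fixes the sign of the divergent term. When $n<4s$ one has $v_i(0)^2\int_{B_\delta}v_i^2=O(1)$, so this unweighted identity (your schematic one, the paper's Identity 2) yields no contradiction at all: both sides are of the same order $v_i(0)^{-2}$, and a sign argument would require controlling the limiting ``mass'' term, which your sketch does not do. The paper resolves this by switching to the weighted Identity 1 (with the $|x|^2$ factor), for which $\int_{B_\delta}|x|^2 v_i^{p_i+1}\geq C^{-1}$ does not decay, so multiplying by $v_i(0)^2$ again produces divergence against bounded remainders \eqref{ge3.82}--\eqref{ge3.83}. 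Your proposal would therefore break precisely for $n/4<s<n/2$, a substantial part of the stated range.

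Two smaller points: the upgrade from isolated to isolated simple blow-up points is not just ``standard monotonicity of the spherical average''; in the paper it is Proposition \ref{gp3.13}, whose proof itself hinges on the same integral Pohozaev identities together with the fine estimates of Lemmas \ref{gl3.14}--\ref{gl3.16}, so it cannot be quoted for free at this generality of $s$. Also, the paper does not use a fixed stereographic projection plus an isometry; it re-projects with each blow-up point $P_i$ as the south pole (after locating finitely many blow-up points via Propositions \ref{gp3.17} and \ref{gp3.18}), which is what puts the blow-up at the origin of the integral equation \eqref{g3.40}. Your final bootstrap from the $L^\infty$ bound to the $C^{2s}$ bound via the regularity theory of \cite{JLX} matches the paper.
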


In the above lemma we use, for simplicity, $C^s(\Omega)$ to denote $C^{\lfloor s\rfloor, s-\lfloor s\rfloor}(\Omega)$ over a domain $\Omega \subset \mathbb{R}^n$.

\subsection{ Blow up analysis of solutions of nonlinear integral equations}\

Let $\tau_i$ be a sequence of non-negative constants satisfying $\lim_{i\rightarrow \infty} \tau_i = 0$, and set
\begin{equation*}
p_i=\frac{n+2s}{n-2s}-\tau_i.
\end{equation*} 
Suppose that $0\leq v_i \in L_{loc}^{\infty}(\mathbb{R}^n)$ satisfies the nonlinear integral equation
\begin{equation}
\label{g3.1}
v_i(x)=\int_{\mathbb{R}^n}\frac{v_i^{p_i}(y)+a_i(y)v_i(y)}{|x-y|^{n-2s}}dy~\text{ in } \Omega,
\end{equation}  
where $\Omega=B_3\subset \mathbb{R}^n$ and $a_i$ are nonnegative bound functions in $\mathbb{R}^n$. We assume that $a_i\in C^2(\mathbb{R}^n)$ and, for some positive constants $A_1$ and $A_2$,
\begin{equation}
\label{g3.2}
A_1^{-1}\leq a_i \leq A_1  \text{ in } \Omega, \text{ and } \|a_i\|_{C^{1,1}(\Omega)}\leq A_2.
\end{equation}

We say that $\{v_i\}_{i\in \mathbb{N}}$ blows up if $\|v_i\|_{L^{\infty}(\Omega)}\rightarrow \infty$ as $i\rightarrow \infty$.

\begin{definition}
\label{gd3.1}
Suppose that $\{v_i\}_{i\in \mathbb{N}}$ satisfies \eqref{g3.1}. We say that a point $\tilde{x}\in \Omega$ is an isolated blow up point of  $\{v_i\}_{i\in \mathbb{N}}$ if there exist $0<\tilde{r}<dist(\tilde{x},\partial \Omega)$, $A_3>0$ and sequence $\{x_i\}_{i\in \mathbb{N}}$ such that $x_i$ is a local maximum of $v_i$, $x_i\rightarrow \tilde{x}$, $v_i(x_i)\rightarrow \infty$ and 
\begin{equation*}
v_i(x)\leq A_3|x-x_i|^{-\frac{2s}{p_i-1}} \text{ for all } x\in B_{\tilde{r}}(\tilde{x}).
\end{equation*}
\end{definition}
 
 Let $\tilde{x}$ be an isolated blow up of $\{v_i\}_{i \in \mathbb{N}}$.  We will denote the above definition by $x_i\rightarrow \tilde{x}$.  Define
 \begin{equation}
\label{g3.3}\overline{v}_i(r):=\frac{1}{|\partial B_r(x_i)|}\int_{\partial B_r(x_i)} v_i ds, ~ r>0, 
\end{equation}   
 and 
\begin{equation*}
\overline{w}_i(r):= r^{\frac{2s}{p_i-1}}\overline{v}_i(r), ~ r>0.
\end{equation*}

\begin{definition}
\label{gd3.2}
We say that $ \tilde{x} \in \Omega$ is an isolated simple blow up point, if $\tilde{x}$ is an isolated blow up point and, for some $\rho>0$, (independent of $i$) $\overline{w}_i$ has precisely one critical point in $(0,\rho)$ for large $i$.  
\end{definition}

If $x_i\rightarrow 0$ is an isolated blow up point, then we will have the following Harnack inequality in the annulus with center at $0$ whose proof is similar to \cite{JLX}.

\begin{lemma}
\label{gl3.1}
Suppose that $v_i\in L^{\infty}_{loc}(\mathbb{R}^n)$ is a nonnegative function satisfying \eqref{g3.1}. Suppose that $x_i\rightarrow 0$ is an  isolated simple blow up point of  $\{v_i\}_{i \in \mathbb{N}}$. Then, for any $0<r<\tilde{r}/3$, we have the following Harnack inequality
\begin{equation*}
\sup_{B_{2r}(x_i)\backslash \overline{B_{r/2}(x_i)}}v_i \leq C \inf_{B_{2r}(x_i)\backslash \overline{B_{r/2}(x_i)}}v_i,
\end{equation*} 
where $C$ is a positive constant depending only on $n$, $s$, $\tilde{r}$, $A_3$ and  $A_2$.
\end{lemma}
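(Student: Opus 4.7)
The strategy is to rescale around the concentration point $x_i$ so that the annulus $B_{2r}(x_i)\setminus \overline{B_{r/2}(x_i)}$ becomes a fixed reference annulus, and then apply a Harnack inequality to the rescaled integral equation, whose coefficients are uniformly controlled.

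Fix $r \in (0, \tilde{r}/3)$ and set
$$w_i(y) := r^{\frac{2s}{p_i - 1}}\, v_i(x_i + r y), \qquad y \in \mathbb{R}^n.$$
The exponent $2s/(p_i - 1)$ is the conformal one, so a direct change of variables $z = x_i + r\eta$ in \eqref{g3.1} gives
$$w_i(y) = \int_{\mathbb{R}^n} \frac{w_i^{p_i}(\eta) + \tilde{a}_i(\eta)\, w_i(\eta)}{|y - \eta|^{n - 2s}}\, d\eta, \qquad \tilde{a}_i(\eta) := r^{2s}\, a_i(x_i + r\eta).$$
Since $r \leq \tilde{r}/3$ and $a_i$ obeys \eqref{g3.2}, $\tilde{a}_i$ is uniformly bounded (and uniformly $C^{1,1}$) in $i$, and the target annulus in the statement becomes $B_2(0) \setminus \overline{B_{1/2}(0)}$.

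The isolated blow-up bound $v_i(x) \leq A_3\, |x - x_i|^{-2s/(p_i - 1)}$ on $B_{\tilde{r}}(x_i)$ from Definition \ref{gd3.1} rescales to $w_i(\eta) \leq A_3\, |\eta|^{-2s/(p_i - 1)}$ for $|\eta| \leq \tilde{r}/r$. Because $\tilde{r}/r > 3$, this forces $w_i$ to be uniformly bounded on the closed annulus $\overline{B_3(0)} \setminus B_{1/3}(0)$ by a constant depending only on $n$, $s$, $A_3$, $\tilde r$. Consequently the potential $V_i := w_i^{p_i - 1} + \tilde{a}_i$ is uniformly bounded on that annulus, and in the weak sense $w_i$ satisfies the nonlocal equation $(-\Delta)^s w_i = c_{n,s}\, V_i\, w_i$ on $B_3(0) \setminus B_{1/3}(0)$, with $w_i \geq 0$ throughout $\mathbb{R}^n$.

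The last step would be to apply a Harnack inequality for nonnegative solutions of such nonlocal equations with uniformly bounded potentials, yielding $\sup_{B_2 \setminus \overline{B_{1/2}}} w_i \leq C\, \inf_{B_2 \setminus \overline{B_{1/2}}} w_i$ with $C$ independent of $i$ and $r$; unscaling then gives the lemma. The main obstacle is securing this Harnack inequality uniformly for $s$ in the full range $(0, n/2)$: for $s \in (0,1)$ it follows via the Caffarelli--Silvestre extension and standard interior Harnack, with the global nonnegativity of $w_i$ used to control the nonlocal tail. For $s \geq 1$ I would argue directly from the integral representation, comparing $w_i(y_1)$ and $w_i(y_2)$ for $y_1, y_2 \in B_2 \setminus \overline{B_{1/2}}$ by splitting the integrals at $|\eta|=3$: on the outer region the kernels $|y_j - \eta|^{-(n-2s)}$ are pointwise comparable uniformly in $y_1, y_2$, while on the inner region the uniform bound on $w_i$ makes the integrand controlled, so the resulting Riesz potential is H\"older continuous, giving the desired comparability of $w_i(y_1)$ and $w_i(y_2)$.
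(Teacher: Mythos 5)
Your rescaling step, the identity $w_i(y)=\int_{\mathbb{R}^n}|y-\eta|^{2s-n}\bigl(w_i^{p_i}(\eta)+\tilde a_i(\eta)w_i(\eta)\bigr)d\eta$ with $\tilde a_i=r^{2s}a_i(x_i+r\,\cdot)$, and the uniform bound $w_i\leq C(n,s,A_3)$ on $\overline{B_3}\setminus B_{1/3}$ are all correct, and this is the standard setup (the paper itself offers no argument beyond ``similar to \cite{JLX}''). The gap is in your final comparability step for general $s$, which is exactly the case the lemma is needed for since the paper allows all $s\in(0,n/2)$. First, your ``inner region'' $\{|\eta|<3\}$ contains the core $B_{1/3}$, where $w_i$ is \emph{not} uniformly bounded: $\sup_{B_{1/3}}w_i\geq w_i(0)=r^{2s/(p_i-1)}v_i(x_i)\to\infty$ for fixed $r$. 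So the integrand there is not controlled and the asserted uniform H\"older continuity of the inner Riesz potential is unjustified; that piece must instead be handled by kernel comparability, which does work because $y_1,y_2\in B_2\setminus\overline{B_{1/2}}$ stay at distance $\geq 1/6$ from $B_{1/3}$. Second, and more fundamentally, even where a uniform modulus of continuity for the local potential is available it only yields an additive bound $|I(y_1)-I(y_2)|\leq C$, which never implies the multiplicative bound $I(y_1)\leq C\,I(y_2)$ unless one also has a uniform positive lower bound; none is available here, and in the intended application (cf.\ Lemma \ref{gl3.6}, Proposition \ref{gp3.10}) $w_i$ on the annulus is of size $v_i(x_i)^{-1}$ times a fixed power of $r$, hence tends to $0$ as $i\to\infty$. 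Additive regularity is therefore structurally incapable of producing the Harnack inequality; the same objection applies if one tries to absorb the middle region by an $O(1)$ error.

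What is missing is a multiplicative treatment of the genuinely delicate region, the middle annulus $D=\{1/3\leq|\eta|\leq 3\}$, where $\eta$ may approach $y_1$ or $y_2$ and the kernels are not comparable. There one should use $w_i^{p_i-1}+\tilde a_i\leq\Lambda(n,s,A_3,A_1)$ to get, for the local piece $u(x)=\int_D|x-\eta|^{2s-n}F_i(\eta)\,d\eta$ with $F_i=w_i^{p_i}+\tilde a_iw_i$, the linear inequality $u(x)\leq\Lambda\int_D|x-\eta|^{2s-n}u(\eta)\,d\eta+C\,w_i(y_2)$ (after slightly separating the splitting radii so that the core and outer contributions are comparable at every point of $D$), and iterate it finitely many times until the iterated kernel is bounded (i.e.\ $2ks>n$); the iteration closes because $\int_D F_i\leq C\,w_i(y_2)$, which follows directly from the representation since $|y_2-\eta|$ is bounded above on $D$. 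This multiplicative mechanism is, in substance, the argument in \cite{JLX} to which the paper defers. Your extension-based route for $s\in(0,1)$ is sound in spirit, with one small correction: \eqref{g3.1} is assumed only for $x\in\Omega=B_3$, so outside $\Omega$ nothing is known about $v_i$ beyond nonnegativity; the Harnack argument should be applied to the global Riesz potential of $F_i$ (nonnegative everywhere and equal to $v_i$ on $\Omega$), not to $w_i$ itself.
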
 

 The next two results are similar to \cite[Proposition 2.9 and 2.10]{JLX}. The first one shows that if $x_i\rightarrow 0 $ is an isolated blow up point, then $v_i$ tends to be a standard bubble. Then second one shows that $v_i$ is a lower bound near isolated blow up points.
\begin{proposition}
\label{gp3.1}
Assume the assumptions in Lemma \ref{gl3.1}. Then for any $R_i\rightarrow \infty$, $\varepsilon_i \rightarrow 0^+$, we have , unless subsequence, that 
\begin{equation*}
\| m_i^{-1} v_i(m_i^{-\frac{p_i- 1}{2s}} \cdot + x_i) - (1+|\cdot|^2)^{\frac{2s-n}{2}} \|_{C^2(B_{2R_i}(0))}\leq \varepsilon_i
\end{equation*}
\begin{equation*}
r_i:=R_i m_i^{-\frac{p_i-1}{2s}}\rightarrow 0 \text{ as } i \rightarrow \infty,
\end{equation*}
where $m_i=v_i(x_i)$.
\end{proposition}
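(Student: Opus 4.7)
The plan is to rescale the solutions around the blow-up sequence $x_i$ so that the rescaled functions take the value $1$ at the origin, extract a subsequential $C^2_{loc}$ limit, and use the classification of positive solutions of the fractional critical equation on $\mathbb{R}^n$ (identity \eqref{fractional} from Chen--Li--Ou \cite{CLO1}) to identify the limit as the standard bubble. Concretely, I would set
\begin{equation*}
\tilde v_i(y) := m_i^{-1} v_i\!\left(m_i^{-(p_i-1)/(2s)} y + x_i\right), \qquad y \in \mathbb{R}^n,
\end{equation*}
so that $\tilde v_i(0)=1$ and $0$ is a local maximum of $\tilde v_i$. A change of variables in \eqref{g3.1} shows that
\begin{equation*}
\tilde v_i(y) = \int_{\mathbb{R}^n} \frac{\tilde v_i(z)^{p_i} + \tilde a_i(z)\, \tilde v_i(z)}{|y-z|^{n-2s}}\, dz,
\end{equation*}
where $\tilde a_i(z) = m_i^{-(p_i-1)} a_i\!\left(m_i^{-(p_i-1)/(2s)} z + x_i\right)$. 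By \eqref{g3.2} and $m_i \to \infty$, we have $\tilde a_i \to 0$ uniformly on compact sets, while the isolated-blow-up assumption of Definition \ref{gd3.1} translates into
\begin{equation*}
\tilde v_i(y) \leq A_3 \, |y|^{-2s/(p_i-1)} \qquad \text{on } B_{\tilde r\, m_i^{(p_i-1)/(2s)}}(0),
\end{equation*}
which is a uniform $L^\infty_{loc}$ bound on expanding balls.

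Next I would use the integral representation together with standard fractional regularity (for instance, the Schauder-type estimates of Jin--Li--Xiong \cite{JLX} already invoked in Section \ref{gs4}) to upgrade this pointwise bound into $C^2_{loc}$ bounds on $\tilde v_i$. By Arzel\`a--Ascoli, after passing to a subsequence, $\tilde v_i \to \tilde v$ in $C^2_{loc}(\mathbb{R}^n)$ with $\tilde v \geq 0$, $\tilde v(0)=1$, and $0$ a local maximum. Passing to the limit inside the integral (with dominated convergence justified by the tail estimate described below), $\tilde v$ solves
\begin{equation*}
\tilde v(y) = \int_{\mathbb{R}^n} \frac{\tilde v(z)^{(n+2s)/(n-2s)}}{|y-z|^{n-2s}}\, dz, \qquad y \in \mathbb{R}^n.
\end{equation*}
By \eqref{fractional} of \cite{CLO1}, together with the normalization $\tilde v(0)=1$ and the fact that $0$ is a critical point, one concludes $\tilde v(y) = (1+|y|^2)^{(2s-n)/2}$. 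A standard diagonal argument then produces sequences $R_i \to \infty$ and $\varepsilon_i \to 0^+$ along which the stated $C^2$-closeness on $B_{2R_i}(0)$ holds, and $r_i = R_i m_i^{-(p_i-1)/(2s)} \to 0$ follows by choosing $R_i$ to grow slowly enough against $m_i^{(p_i-1)/(2s)} \to \infty$.

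The main obstacle is justifying the passage to the limit in the nonlocal integral. The rescaled bound $\tilde v_i(z) \leq A_3 |z|^{-2s/(p_i-1)}$ gives $\tilde v_i^{p_i}(z) \lesssim |z|^{-2s p_i/(p_i-1)}$ with exponent tending to $-(n+2s)$ from above, which is borderline integrable against $|y-z|^{-(n-2s)}$. To control the far region $|z|\geq R$ uniformly in $i$ I would split the integral and invoke the Harnack inequality of Lemma \ref{gl3.1} to bound $\tilde v_i$ in annular regions by its radial average, and use the isolated-simple-blow-up property (via Definition \ref{gd3.2}) to rule out a second concentration that could spoil the tail. The linear term $\tilde a_i \tilde v_i$ is harmless since $\tilde a_i \to 0$ uniformly on compacta and decays together with the Riesz kernel at infinity.
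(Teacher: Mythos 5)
Your outline follows the same standard route that the paper itself relies on (the paper gives no proof of Proposition \ref{gp3.1}; it only says the result is ``similar to'' Propositions 2.9 and 2.10 of \cite{JLX}): rescale at the natural scale $m_i^{-(p_i-1)/(2s)}$, get $C^2_{loc}$ compactness, identify the limit through the classification of \cite{CLO1}, then handle $R_i,\varepsilon_i$. The genuine gap is at the compactness step. You assert that the rescaled bound $\tilde v_i(y)\le A_3|y|^{-2s/(p_i-1)}$ coming from Definition \ref{gd3.1} ``is a uniform $L^\infty_{loc}$ bound on expanding balls''. It is not: since $2s/(p_i-1)\to(n-2s)/2>0$, the bound degenerates precisely at $y=0$, the point whose neighborhood you must control, and by itself it does not exclude a second concentration of $\tilde v_i$ at some scale $o(1)$ near the origin, where $\tilde v_i$ could be arbitrarily large. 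Without uniform bounds on balls containing $0$ you cannot invoke the regularity theory of \cite{JLX} to get $C^2_{loc}$ subconvergence, so the limiting equation, the classification step and the final subsequence extraction are all unsupported. The missing idea is the quantitative interplay of the normalization $\tilde v_i(0)=1$ with the $A_3$-bound: if $\sup_{B_{1/2}}\tilde v_i=K_i\to\infty$, the $A_3$-bound forces the point where this is attained to lie within $C K_i^{-(p_i-1)/(2s)}$ of the origin; a Schoen-type selection and second rescaling at that scale produces an inner bubble of height comparable to $K_i$ centered at distance $O(K_i^{-(p_i-1)/(2s)})$ from $0$, and then evaluating \eqref{g3.1} at $x=x_i$ (whose rescaled form gives $\tilde v_i(0)\ge c\,K_i$) contradicts $\tilde v_i(0)=1$. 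Some argument of this kind is the real content of the proposition and has to be supplied.

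Two smaller points. First, your decay exponent is off: $2sp_i/(p_i-1)\to(n+2s)/2$, not $n+2s$; with the correct exponent one still has $(n-2s)+(n+2s)/2>n$, so the contribution of $\{|z|\ge L\}$ inside the rescaled domain is uniformly small for $L$ large by direct integration, with no need for the Harnack inequality of Lemma \ref{gl3.1} or the isolated-simple property. For the genuine far field (the image of $\mathbb{R}^n\setminus B_\rho(x_i)$) the efficient argument is the kernel-comparison trick used in \eqref{g3.11}: that tail is comparable at all points of $B_{\rho/2}(x_i)$ to its value at a point at fixed distance $\rho/4$ from $x_i$, where $v_i\le A_3(\rho/4)^{-2s/(p_i-1)}=O(1)$, so after dividing by $m_i$ it tends to zero; this is what guarantees that the limit solves the pure bubble equation with no nonnegative additive remainder, a point your write-up leaves implicit. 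Second, the proposition asks for the estimate along a subsequence for arbitrarily \emph{given} sequences $R_i\to\infty$, $\varepsilon_i\to0^+$; this follows from the $C^2_{loc}$ convergence by choosing the subsequence of $\{v_i\}$ accordingly, rather than by constructing favorable $R_i,\varepsilon_i$ as in your last step, and the limiting profile is $(1+\kappa|y|^2)^{(2s-n)/2}$ with a constant $\kappa=\kappa(n,s)$ unless the Riesz kernel is normalized, an issue the paper itself suppresses.
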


\begin{proposition}
\label{gp3.2}
Under the hypotheses of Proposition \ref{gp3.1}, there exists some positive constant $C=C(n,s,A_1, A_2, A_3)$ such that
\begin{equation*}
v_i(x)\geq C m_i(1+m_i^{\frac{p_i-1}{s}}|x-x_i|^2)^{\frac{2s-n}{2}} \text{ for all } |x-x_i|\leq 1.
\end{equation*}
In particular, for any $e\in \mathbb{R}^n$, $|e|=1$, we have
\begin{equation*}
v_i(x_i+e) \geq C m_i^{-1+\frac{(n-2s)\tau_i}{2s}},
\end{equation*}
where $\tau_i=(n+2s)/(n-2s)-p_i$.
\end{proposition}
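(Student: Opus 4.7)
The natural strategy is to split the ball $\{|x-x_i|\le 1\}$ into an \emph{inner zone} $\{|x-x_i|\le r_i\}$ and an \emph{outer zone} $\{r_i<|x-x_i|\le 1\}$, with $r_i=R_i m_i^{-(p_i-1)/(2s)}$ the concentration scale furnished by Proposition \ref{gp3.1}. On the inner zone the inequality follows essentially for free from Proposition \ref{gp3.1}: writing $y=m_i^{(p_i-1)/(2s)}(x-x_i)$ one has $|y|\le R_i$, and the $C^2$-closeness to $U(y)=(1+|y|^2)^{(2s-n)/2}$ (with the $\varepsilon_i$ in Proposition \ref{gp3.1} chosen small relative to $(1+R_i^2)^{(2s-n)/2}$, which is permissible by a further diagonal subsequence) gives
\[
v_i(x)\ \ge\ \tfrac{1}{2}\,m_i\bigl(1+m_i^{(p_i-1)/s}|x-x_i|^2\bigr)^{(2s-n)/2}
\]
throughout the inner zone.

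For the outer zone I would use the integral representation \eqref{g3.1} and keep only the contribution of the concentration ball. Since $a_iv_i\ge 0$ and $|x-y|\le 2|x-x_i|$ whenever $|x-x_i|\ge r_i$ and $y\in B_{r_i}(x_i)$, this yields
\[
v_i(x)\ \ge\ \frac{1}{(2|x-x_i|)^{n-2s}}\int_{B_{r_i}(x_i)}v_i^{p_i}(y)\,dy.
\]
Inserting the inner-zone bound and performing the change of variables $z=m_i^{(p_i-1)/(2s)}(y-x_i)$ gives
\[
\int_{B_{r_i}(x_i)}v_i^{p_i}(y)\,dy\ \ge\ C\,m_i^{\,p_i-n(p_i-1)/(2s)}\int_{B_{R_i}}(1+|z|^2)^{p_i(2s-n)/2}\,dz.
\]
For $\tau_i$ small one has $p_i(n-2s)>n$, so the last integral converges to $\int_{\mathbb{R}^n}(1+|z|^2)^{-(n+2s)/2}dz>0$ and is bounded below by a positive constant independent of $i$. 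An elementary calculation using $p_i=(n+2s)/(n-2s)-\tau_i$ simplifies the exponent to $p_i-n(p_i-1)/(2s)=-1+(n-2s)\tau_i/(2s)$, and one arrives at
\[
v_i(x)\ \ge\ \frac{C\,m_i^{-1+(n-2s)\tau_i/(2s)}}{|x-x_i|^{n-2s}}\qquad\text{for } r_i\le|x-x_i|\le 1.
\]

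It remains to check that this coincides, up to a constant, with the target bound in the outer zone. Indeed, there $m_i^{(p_i-1)/s}|x-x_i|^2\ge R_i^2\ge 1$, so $(1+m_i^{(p_i-1)/s}|x-x_i|^2)^{(2s-n)/2}$ is comparable to $m_i^{(p_i-1)(2s-n)/(2s)}|x-x_i|^{2s-n}$; a direct algebraic check shows $1+(p_i-1)(2s-n)/(2s)=-1+(n-2s)\tau_i/(2s)$, which matches the exponent just obtained. Gluing the two zones finishes the first inequality, and setting $|x-x_i|=1$ immediately yields $v_i(x_i+e)\ge C m_i^{-1+(n-2s)\tau_i/(2s)}$. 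The main nuisance I expect is the careful bookkeeping of the $\tau_i$-dependent exponents and verifying that the rescaled bubble integral remains uniformly bounded below (which forces $i$ large enough, equivalently $\tau_i$ small enough); the rest is soft and mirrors the standard blow-up argument used in \cite{JLX, NPX}.
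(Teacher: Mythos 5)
Your argument is correct and is essentially the standard one that the paper relies on by citing \cite[Propositions 2.9 and 2.10]{JLX}: the inner zone $|x-x_i|\le r_i$ follows from the bubble approximation of Proposition \ref{gp3.1} (with $\varepsilon_i$ chosen of order $(1+R_i^2)^{(2s-n)/2}$), and the outer zone from the integral equation \eqref{g3.1} restricted to $B_{r_i}(x_i)$ together with $|x-y|\le 2|x-x_i|$, the exponent computations $p_i-n(p_i-1)/(2s)=-1+(n-2s)\tau_i/(2s)=1+(p_i-1)(2s-n)/(2s)$ being exactly right. No gaps beyond the routine subsequence bookkeeping you already acknowledge.
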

 
 To obtain an upper bound of $v_i$ near isolated blow up points, we need auxiliary bounds and a variant of a Pohoz\v{a}ev type identity.
 \begin{lemma}
 \label{gl3.6}
 Under the hypotheses of Proposition \ref{gp3.1} with $\tilde{r}=2$, and  in addition that $x_i\rightarrow 0$ is also an isolated simple blow up point with the constant $\rho$, then there exists $\delta_i>0$, $\delta_i=O(R_i^{-2s+o(1)})$, such that
 \begin{equation*}
 v_i(x)\leq C R_i^{(n-2s)\tau_i}v_i(x_i)^{-\lambda_i}|x-x_i|^{2s-n+\delta_i} \text{ for all } r_i\leq |x-x_i|\leq 1, 
 \end{equation*}
 where $\lambda_i = (n-2s-\delta_i)(p_i-1)/2s - 1$ and $C=C(n,s, A_1, A_3, \rho)>0$.
 \end{lemma}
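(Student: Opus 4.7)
The plan is to pass from the isolated-simple-blow-up hypothesis on the mass function $\overline{w}_i(r)=r^{2s/(p_i-1)}\overline{v}_i(r)$ to a pointwise upper bound on $v_i$ away from $x_i$, and then bootstrap that bound through the integral equation \eqref{g3.1} against a Riesz-potential barrier. The exponent $\lambda_i=(n-2s-\delta_i)(p_i-1)/(2s)-1$ is exactly the one that makes the self-similar comparison $|x-x_i|^{2s-n+\delta_i}$ close in the integral equation, so the form of $\lambda_i$ in the statement is not chosen by us but forced by the scaling of the Riesz kernel against the supercritical-type nonlinearity $v^{p_i}$.

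First I would extract monotonicity from the isolated-simple condition. By Proposition \ref{gp3.1}, $\overline{w}_i$ has exactly one critical point inside the bubble scale $[0,r_i]$, so the Definition \ref{gd3.2} assumption forces $\overline{w}_i$ to be strictly decreasing on $(r_i,\rho)$. Combined with the spherical Harnack inequality of Lemma \ref{gl3.1}, this converts into the pointwise control
\begin{equation*}
v_i(x)\le C\,\overline{w}_i(r_i)\,|x-x_i|^{-2s/(p_i-1)}\qquad\text{for }r_i\le |x-x_i|\le\rho,
\end{equation*}
and the value of $\overline{w}_i(r_i)$ is computed from the bubble profile in Proposition \ref{gp3.1}, producing an $R_i^{(n-2s)\tau_i+o(1)}$ factor and the initial $m_i^{-\lambda_i}$ scaling after converting $r_i=R_im_i^{-(p_i-1)/(2s)}$. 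This is the rough decay estimate to be refined.

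Next comes the bootstrap. I would plug the rough estimate back into \eqref{g3.1} after splitting the domain into three pieces: the inner bubble region $\{|y-x_i|\le r_i\}$, the annulus $\{r_i\le|y-x_i|\le 1\}$, and the exterior $\{|y-x_i|\ge 1\}$. Inside the bubble, the explicit profile from Proposition \ref{gp3.1} lets me compute $\int v_i^{p_i}|x-y|^{2s-n}dy$ explicitly and produces the claimed factor $C\,R_i^{(n-2s)\tau_i}m_i^{-\lambda_i}|x-x_i|^{2s-n}$. In the annulus, I use the current decay bound for $v_i$; the gain comes from comparing against a barrier $|x-x_i|^{2s-n+\delta}$ and computing that the convolution of $|y-x_i|^{(2s-n+\delta)p_i}$ with $|x-y|^{2s-n}$ reproduces $|x-x_i|^{2s-n+\delta}$ provided $(n-2s-\delta)p_i<n$, i.e.\ precisely when $\delta$ satisfies the scaling identity defining $\lambda_i$. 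The outer piece is $O(1)$ by the $L^\infty_{\mathrm{loc}}$ hypothesis on $v_i$ together with the subcritical lower-order term controlled via \eqref{g3.2}. Iterating this inequality a finite number of times drives the exponent to the fixed point and fixes $\delta_i=O(R_i^{-2s+o(1)})$.

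The main obstacle will be closing the bootstrap at the correct value of $\delta_i$: one must check that the annulus contribution does \emph{not} destroy the bound, which requires a careful accounting of the small constant $R_i^{(n-2s)\tau_i}$ and a precise calibration of $\delta_i$ in terms of $R_i$ so that the integral $\int_{r_i\le|y-x_i|\le 1}|y-x_i|^{(2s-n+\delta_i)p_i}|x-y|^{2s-n}dy$ beats the induction hypothesis by a definite factor. Two smaller technical points are (i) handling the linear-in-$v_i$ perturbation $a_iv_i$ in \eqref{g3.1}, which is strictly subcritical under \eqref{g3.2} and is absorbed by standard Riesz-potential estimates, and (ii) dealing with the nonlocal tail coming from $|y-x_i|\ge 1$, controlled by the interior Harnack inequality (Lemma \ref{gl3.1}) and the a priori control on $\|v_i\|_{L^\infty(B_3)}$ implicit in the hypotheses.
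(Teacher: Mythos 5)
Your skeleton (rough decay from the isolated-simple monotonicity plus the Harnack inequality of Lemma \ref{gl3.1}, then refinement through the integral equation with the bubble region of Proposition \ref{gp3.1} supplying the factor $R_i^{(n-2s)\tau_i}m_i^{-\lambda_i}|x-x_i|^{2s-n+\delta_i}$) matches the paper's Steps 1--2, but the mechanism you invoke to close the annulus estimate is wrong, and this is the crux of the lemma. For the $\delta_i=O(R_i^{-2s+o(1)})$ of the statement one has $(n-2s-\delta_i)p_i\to n+2s>n$, so your condition ``$(n-2s-\delta)p_i<n$'' never holds and is not ``the scaling identity defining $\lambda_i$'': solving $(n-2s-\delta)p_i=n-\delta$ gives $\delta\approx\frac{n-2s}{2}$, i.e. it merely reproduces the scale-invariant decay $|x-x_i|^{-2s/(p_i-1)}$ you started from. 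Indeed $-2s/(p_i-1)$ is the fixed point of the map ``insert a decay rate into $v_i^{p_i}$ and convolve with the Riesz kernel'', so the finite iteration you describe stalls at the rough bound; no amount of iterating the full power nonlinearity produces the exponent $2s-n+\delta_i$. What actually closes the estimate in the paper is linearization plus smallness: write $v_i^{p_i}=v_i^{p_i-1}\cdot v_i$, use Step 1 in the quantitative form $v_i^{p_i-1}\le C R_i^{-2s+o(1)}|x-x_i|^{-2s}$ (this, not $R_i^{(n-2s)\tau_i}\ge 1$, is the relevant small constant), and choose $\delta_i=O(R_i^{-2s+o(1)})$ so that the Riesz potential of this coefficient against either power $|y-x_i|^{-\delta_i}$ or $|y-x_i|^{2s-n+\delta_i}$ is at most one quarter of the same power, as in \eqref{g3.7}--\eqref{g3.8}; then the two-term barrier $\phi_i=M_i\rho_1^{\delta_i}|x-x_i|^{-\delta_i}+AR_i^{(n-2s)\tau_i}m_i^{-\lambda_i}|x-x_i|^{2s-n+\delta_i}$ satisfies $L_i\phi_i\le\phi_i$ in the annulus, and one concludes by a comparison argument for the integral operator rather than by iteration.

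A second genuine gap is your treatment of the exterior region: it is not an innocuous $O(1)$ error, it is bounded by $2^{n-2s}\max_{\partial B_{\rho_1}(x_i)}v_i=M_i/4$, and this is precisely why the comparison function must carry the extra component $M_i\rho_1^{\delta_i}|x-x_i|^{-\delta_i}$. Your final claimed bound contains no such boundary term, and removing it (showing that the $|x-x_i|^{2s-n+\delta_i}$ part dominates, which is the content of the paper's omitted Steps 3--4 following Jin--Li--Xiong and uses again the monotonicity of $r^{2s/(p_i-1)}\overline{v}_i(r)$) is a step your proposal never addresses. The parts of your sketch concerning the rough decay, the inner-bubble computation giving $m_i^{-\lambda_i}$, and the absorption of the linear term $a_iv_i$ via \eqref{g3.2} are consistent with the paper, but without (a) the correct smallness mechanism in the annulus and (b) the introduction and subsequent elimination of the boundary term, the bootstrap as written does not prove the lemma.
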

 \begin{proof}
 The proof consists of four steps. Steps 1, 3 and 4 are similar as in \cite{JLX}.

STEP 2. From step 1, we have 
\begin{equation}
\label{g3.6}
v_i(x)^{p_i-1} \leq C R_i^{-2s+o(1)}|x-x_i|^{-2s} \text{ for all } r_i\leq |x-x_i|\leq \rho,
\end{equation}
 where $o(1)$ denotes some quantity tending to $0$ as $i\rightarrow \infty$. 
 
Now,  let 
\begin{equation*}
L_i \phi(y):=\int_{\mathbb{R}^n}\frac{v_i(z)^{p_i-1}\phi(z)+a_i(z)\phi(z)}{|y-z|^{n-2s}}dz.
\end{equation*}
Thus \begin{equation*}
v_i=L_i v_i.
\end{equation*}
Note that for $2s<\alpha<n$ and $0<|x|<2$,
\begin{align*}
\int_{\mathbb{R}^n}\frac{1}{|x-y|^{n-2s}|y|^{\alpha}} dy& = |x|^{2s-n}\int_{\mathbb{R}^n}\frac{1}{||x|^{-1}x-|x|^{-1}y|^{n-2s}|y|^{\alpha}}dy \\
& = |x|^{-\alpha+2s} \int_{\mathbb{R}^n}\frac{1}{||x|^{-1}x-z|^{n-2s}|z|^{\alpha}}dz\\
& \leq C\left(\frac{1}{n-\alpha}+\frac{1}{\alpha-2s}+1 \right)|x|^{-\alpha+2s},
\end{align*}
where we made the change of variables $y=|x|z$. By (\ref{g3.6}), we can choose $0<\delta_i=O(R_i^{-2s+o(1)})$ such that
\begin{equation}
\label{g3.7}
\int_{r_i<|y-x_i|<\rho}\frac{v_i(y)^{p_i-1}|y-x_i|^{-\delta}}{|x-y|^{n-2s}}dy \leq \frac{1}{4}|x-x_i|^{-\delta_i},
\end{equation} 
and 
\begin{equation}
\label{g3.8}
\int_{r_i<|y-x_i|<\rho}\frac{v_i(y)^{p_i-1}|y-x_i|^{2s-n+\delta}}{|x-y|^{n-2s}}dy \leq \frac{1}{4}|x-x_i|^{2s-n+\delta_i},
\end{equation} 
for all $r_i<|x-x_i|<\rho$. 

Let 
\begin{equation*}
\begin{aligned}
\Omega_{1i} & =\{ y\in \mathbb{R}^n;~r_i<|y-x_i|<\rho,~|x_i-y|>|x-x_i|/2,~|x-y|<|x-x_i|/2\},\\
\Omega_{2i} & =\{y\in \mathbb{R}^n;~r_i<|y-x_i|<\rho,~|x_i-y|<|x-x_i|/2,~|x-y|>|x-x_i|/2\},\\
\Omega_{3i} & =\{ x\in \mathbb{R}^n;~r_i<|y-x_i|<\rho,~|x_i-y|>|x-x_i|/2,~|x-y|>|x-x_i|/2\},\\
\end{aligned}
\end{equation*}
where  $r_i<|x-x_i|<\rho$. Then
\begin{equation}
\label{g3.9}
\begin{aligned}
\int_{r_i<|y-x_i|<\rho}\frac{|y-x_i|^{-\delta_i}}{|x-y|^{n-2s}}dy &\leq C \int_{\Omega_{1i}}+\int_{\Omega_{2i}}+ \int_{\Omega_{3i}}\frac{|y-x_i|^{-\delta_i}}{|x-y|^{n-2s}}dy \\
&\leq  C \rho^{2s}|x-x_i|^{-\delta_i},
\end{aligned}
\end{equation}
where $C$ is a positive constant depending only on $n$ and $s$. Similarly
\begin{equation}
\label{g3.10}
\int_{r_i<|y-x_i|<\rho}\frac{|y-x_i|^{2s-n+\delta_i}}{|x-y|^{n-2s}}dy \leq  C \rho^{2s}|x-x_i|^{2s-n+\delta_i}.
\end{equation}

Set $M_i:=4\times 2^{n-2s}\max_{\partial B_{\rho_1}(x_i)}u_i$,
\begin{equation*}
f_i(x):=\{ M_i\rho_1^{\delta_i}|x-x_i|^{-\delta_i}+AR_i^{(n-2s)\tau_i}m_i^{-\lambda_i} |x-x_i|^{2s-n+\delta_i}\},
\end{equation*}
and 
\begin{equation*}
\phi_i(x):=\begin{cases}
					f_i(x), & r_i<|x-x_i|<\rho_1;\\
					u_i(x), & \text{otherwise},
					\end{cases}
\end{equation*}
where $0<\rho_1<\rho$ small enough and $A>1$ will be chosen later.

We can replace $\rho$ by $\rho_1$ in (\ref{g3.9}) and (\ref{g3.10}). Then from (\ref{g3.7})-(\ref{g3.10}) and  for $r_i<|x-x_i|<\rho_1$, we have
\begin{equation*}
\begin{aligned}
&L_i \phi_i(x)\\
&  = \int_{|y-x_i|\leq r_i}+\int_{r_i<|y-x_i|<\rho_1}+\int_{\rho_1\leq |y-x_i|} \frac{v_i(y)^{p_i-1}\phi(y)+a_i(y)\phi(y)}{|x-y|^{n-2s}}dy \\
& \leq C \int_{|y-x_i|\leq r_i} \frac{v_i(y)^{p_i}+v_i(y)}{|x-y|^{n-2s}}dy+\frac{f_i}{4}+\int_{\rho_1\leq |y-x_i|} \frac{v_i(y)^{p_i-1}\phi(y)+a_i(y)\phi(y)}{|x-y|^{n-2s}}dy
\end{aligned}
\end{equation*}
To estimate the third term, we define $\tilde{x}=\rho_1 \frac{x}{|x|}+x_i\in \partial B_{\rho_1}(x_i)$, and then
\begin{equation}
\label{g3.11}
\begin{aligned}
& \int_{\rho_1\leq |y-x_i|} \frac{v_i(y)^{p_i-1}\phi(y)+a_i(y)\phi(y)}{|x-y|^{n-2s}}dy\\
& =  \int_{\rho_1\leq |y-x_i|}\frac{|\tilde{x}-y|^{n-2s}}{|x-y|^{n-2s}} \frac{v_i(y)^{p_i-1}\phi(y)+a_i(y)\phi(y)}{|\tilde{x}-y|^{n-2s}}dy\\
& \leq 2^{n-2s} \int_{\rho_1\leq |y-x_i|} \frac{v_i(y)^{p_i-1}\phi(y)+a_i(y)\phi(y)}{|\tilde{x}-y|^{n-2s}}dy\\
& \leq 2^{n-2s}v_i(\tilde{x})\leq 2^{n-2s}\max_{\partial B_{\rho_1}(x_i)}v_i\leq \frac{M_i}{4}.
\end{aligned}
\end{equation}
 To estimate the first term, we use change of variables, Proposition \ref{gp3.1}, $R_i^{n-2s}=O(m_i)$ and some computations so that
\begin{equation}
\begin{aligned}
&\int_{|y-x_i|\leq r_i} \frac{v_i(y)^{p_i-1}\phi(y)+a_i(y)\phi(y)}{|x-y|^{n-2s}}dy\\
& \leq C m_i\int_{|z|\leq R_i}\frac{[m_i^{-1}v_i(m_i^{-\frac{p_i-1}{2s}}z+x_i)]^{p_i}+m_i^{-p_i}v_i(m_i^{-\frac{p_i-1}{2s}}z+x_i)}{|m_i^{\frac{p_i-1}{2s}}(x-x_i)-z|^{n-2s}}dz\\
& \leq C m_i\int_{|z|\leq R_i}\frac{\xi(z)^{p_i}+m_i^{-p_i}m_i \xi(z)}{|m_i^{\frac{p_i-1}{2s}}(x-x_i)-z|^{n-2s}}dz\\
& \leq C m_i\int_{|z|\leq R_i}\frac{\xi(z)^{p_i}}{|m_i^{\frac{p_i-1}{2s}}(x-x_i)-z|^{n-2s}}dz\\
& \leq C m_i R_i^{(n-2s)\tau_i}\int_{\mathbb{R}^n}\frac{\xi(z)^{\frac{n+2s}{n-2s}}}{|m_i^{\frac{p_i-1}{2s}}(x-x_i)-z|^{n-2s}}dz\\
& = Cm_iR_i^{(n-2s)\tau_i}\xi(m_i^{\frac{p_i-1}{2s}}(x-x_i)),
\end{aligned}
\end{equation} 
 where 
 \begin{equation*}
 \xi_s(z)=\left( \frac{2}{1+|z|^2} \right)^{\frac{n-2s}{2}}.
 \end{equation*}
 Since $|x-x_i|\geq r_i$, we see that
 \begin{equation*}
\begin{aligned}
m_i \xi_s(m_i^{\frac{p_i-1}{2s}}(x-x_i)) &  \leq C m_i^{\lambda_i}|x-x_i|^{2s-n+\delta_i}.  
\end{aligned} 
 \end{equation*}
 Therefore, we conclude that 
 \begin{equation*}
 L_i \phi_i(x)\leq \phi_i(x) ~\text{ for all } r_i<|x-x_i|<\rho_1,
 \end{equation*}
 provided that $A$ is sufficiently large and independent of $i$.
 \end{proof}
 
 The following identities are essential in the blow up analysis and the compactness of solutions of nonlinear differential and integral equations. Other types of identities can be found in \cite{JLX1, NPX}.
 \begin{proposition}
 \label{gp3.7}
 (Pohozaev type identities) Let $v$ be a non-negative continuous solution in $\mathbb{R}^n$ of
   \begin{equation*}
   v(x)=\int_{B_R}\frac{H(y)v(y)^p+a(y)v(y)}{|x-y|^{n-2s}}dy + h_R(x),
   \end{equation*}
   where $1<p\leq (n+2s)/(n-2s)$ and $h_R\in C^1(B_R)$, $\nabla h_R \in L^1(B_R)$. Then  we have the following:
   
  {\bf Identity 1.} 
   \begin{equation*}
   \begin{aligned}
  & \left(  \frac{n-2s}{2} - \frac{n+2}{p+1}\right) \int_{B_R}|x|^2H(x)v(x)^{p+1}dx -\frac{1}{p+1}\int_{B_R}|x|^2x\nabla H(x)v(x)^{p+1} \\
 &~\text{ }~ -(s+1)\int_{B_R}|x|^2a(x)v(x)^2dx   - \frac{1}{2}\int_{B_R}|x|^2x\nabla a(x) v(x)^2 dx\\
 &~\text{ }~ +\frac{R^3}{p+1}\int_{\partial B_R}H(x)v(x)^{p+1}d\sigma  + \frac{R^3}{2}\int_{\partial B_R}a(x)v(x)^2d\sigma \\
&=  \int_{B_R}|x|^2[H(x)v(x)^p+a(x)v(x)]\left(  \frac{n-2s}{2}h_R(x) +x\nabla h_R(x)\right)dx\\
 &~\text{ }~  -\frac{n-2s}{4}\int_{B_R}\int_{B_R}\frac{(|x|^2-|y|^2)^2}{|x-y|^{n-2s+2}}(Hv^p+av)(x)(Hv^p+av)(y)dxdy.
   \end{aligned}
   \end{equation*}
   
  {\bf Identity 2.} 
    \begin{equation*}
   \begin{aligned}
  & \left(  \frac{n-2s}{2} - \frac{n}{p+1}\right) \int_{B_R}H(x)v(x)^{p+1}dx -\frac{1}{p+1}\int_{B_R}x\nabla H(x)v(x)^{p+1} \\
 &~\text{ }~ -s\int_{B_R}a(x)v(x)^2dx   - \frac{1}{2}\int_{B_R}x\nabla a(x) v(x)^2 dx\\
 &~\text{ }~ +\frac{R}{p+1}\int_{\partial B_R}H(x)v(x)^{p+1}d\sigma  + \frac{R}{2}\int_{\partial B_R}a(x)v(x)^2d\sigma \\
  &=\int_{B_R}[H(x)v(x)^p+a(x)v(x)]\left(  \frac{n-2s}{2}h_R(x) +x\nabla h_R(x)\right)dx.
  \end{aligned}
   \end{equation*}
  
 \end{proposition}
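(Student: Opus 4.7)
The plan is to derive both identities from a single pointwise formula for $x\cdot\nabla v(x)$ obtained by differentiating the integral representation. Set
\begin{equation*}
f(y):=H(y)v(y)^p+a(y)v(y),
\end{equation*}
so that $v(x)=\int_{B_R}|x-y|^{2s-n}f(y)\,dy+h_R(x)$. Differentiating under the integral and using the elementary identity $x\cdot(x-y)=\tfrac12(|x-y|^2+|x|^2-|y|^2)$ yields
\begin{equation*}
x\cdot\nabla v(x)=-\frac{n-2s}{2}\bigl(v(x)-h_R(x)\bigr)-\frac{n-2s}{2}\int_{B_R}\frac{|x|^2-|y|^2}{|x-y|^{n-2s+2}}f(y)\,dy+x\cdot\nabla h_R(x),
\end{equation*}
which I rewrite as
\begin{equation*}
x\cdot\nabla v+\frac{n-2s}{2}v=\frac{n-2s}{2}h_R+x\cdot\nabla h_R-\frac{n-2s}{2}\int_{B_R}\frac{|x|^2-|y|^2}{|x-y|^{n-2s+2}}f(y)\,dy. \quad(\star)
\end{equation*}

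For \textbf{Identity 2}, I would multiply $(\star)$ by $f(x)$ and integrate over $B_R$. The resulting double integral $\iint\frac{|x|^2-|y|^2}{|x-y|^{n-2s+2}}f(x)f(y)\,dx\,dy$ vanishes by the antisymmetry of the kernel under $x\leftrightarrow y$. The remaining term $\int_{B_R}(x\cdot\nabla v)\,f\,dx$ splits into $\int(x\cdot\nabla v)Hv^p$ and $\int(x\cdot\nabla v)av$; writing $x\cdot\nabla(v^{p+1})=(p+1)v^p x\cdot\nabla v$ and $x\cdot\nabla(v^2)=2v\,x\cdot\nabla v$ and integrating by parts with $\nabla\cdot(xH)=nH+x\cdot\nabla H$ and $\nabla\cdot(xa)=na+x\cdot\nabla a$ produces the bulk coefficients $\tfrac{n-2s}{2}-\tfrac{n}{p+1}$ and $\tfrac{n-2s}{2}-\tfrac{n}{2}=-s$, the gradient terms $-\tfrac{1}{p+1}\int x\cdot\nabla H\,v^{p+1}$ and $-\tfrac12\int x\cdot\nabla a\,v^2$, and the boundary contributions $\tfrac{R}{p+1}\int_{\partial B_R}Hv^{p+1}$ and $\tfrac{R}{2}\int_{\partial B_R}av^2$, exactly matching the claimed identity.

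For \textbf{Identity 1}, the only change is to multiply $(\star)$ by $|x|^2 f(x)$ instead of $f(x)$. Now the double integral does not vanish; symmetrizing $x\leftrightarrow y$ I get
\begin{equation*}
\iint\frac{|x|^2(|x|^2-|y|^2)}{|x-y|^{n-2s+2}}f(x)f(y)\,dx\,dy=\frac12\iint\frac{(|x|^2-|y|^2)^2}{|x-y|^{n-2s+2}}f(x)f(y)\,dx\,dy,
\end{equation*}
which produces the claimed $-\tfrac{n-2s}{4}$ nonlocal term. The integration by parts is the same pattern as before, but with $\nabla\cdot(|x|^2xH)=(n+2)|x|^2H+|x|^2x\cdot\nabla H$ and likewise for $a$, yielding the coefficients $\tfrac{n-2s}{2}-\tfrac{n+2}{p+1}$, $\tfrac{n-2s}{2}-\tfrac{n+2}{2}=-(s+1)$, together with the boundary terms $\tfrac{R^3}{p+1}\int_{\partial B_R}Hv^{p+1}$ and $\tfrac{R^3}{2}\int_{\partial B_R}av^2$ since $|x|^2\equiv R^2$ on $\partial B_R$.

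The main technical obstacle is justifying the differentiation under the integral sign in $(\star)$ and the Fubini/symmetrization step for the nonlocal double integral, since the kernel $|x-y|^{2s-n-2}$ is more singular than the original one. I would handle this by the standard device of excising an $\varepsilon$-neighborhood of the diagonal, performing all manipulations there (where every integrand is smooth), and then sending $\varepsilon\to 0$; the factor $|x|^2-|y|^2=O(|x-y|)$ gives a gain of one power that tames the kernel to $|x-y|^{2s-n-1}$, and in the symmetrized double integral the factor $(|x|^2-|y|^2)^2=O(|x-y|^2)$ restores integrability (in $L^1$ against $f\otimes f$ uniformly as $\varepsilon\to 0$) provided $v$ is locally bounded and $p$, $s$ lie in the stated range, which is exactly the regularity available here.
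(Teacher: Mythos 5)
Your proposal is correct and takes essentially the same route as the paper's proof: differentiate the integral representation, pair it with $x\cdot$ (resp. $|x|^2x\cdot$) times the nonlinearity $Hv^p+av$, exploit $x\cdot(x-y)=\tfrac12\bigl(|x-y|^2+|x|^2-|y|^2\bigr)$ together with symmetrization of the kernel under $x\leftrightarrow y$, and integrate by parts via the divergence theorem, with your $\varepsilon$-excision playing the role of the paper's approximation argument for $0<s\le\tfrac12$ (the paper treats $s>\tfrac12$ directly). One small caveat: the single power gained from $|x|^2-|y|^2$ makes the differentiated kernel $|x-y|^{2s-n-1}$ locally integrable only when $s>\tfrac12$, so for $s\le\tfrac12$ your pointwise identity $(\star)$ must indeed be understood only through the excised and symmetrized double integral, exactly as you indicate at the end.
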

 \begin{proof}
 The proof of Identity 2 is similar to the proof of Identity 1. We first prove the case $s>1/2$. Note that
 \begin{equation*}
 |x|^2x(x-y)=\frac{1}{2}|x|^2|x-y|^2 +\frac{1}{4}(|x|^2-|y|^2)^2 +\frac{1}{4}(|x|^4-|y|^4) \text{ for all } x,y\in \mathbb{R}^n,
 \end{equation*}
 and 
\begin{align*}
&\frac{1}{p+1}\int_{B_R} |x|^2xH(x)\nabla v^{p+1}dx + \frac{1}{2}\int_{B_R}|x|^2x a(x)\nabla v^2dx\\
& =\int_{B_R} |x|^2x [H(x)v^{p}+ a(x)v]\nabla v~dx\\
 & =(2s-n)\int_{B_R} [H(x)v^{p}+ a(x)v] \int_{B_R}\frac{|x|^2x(x-y)[H(y)v^{p}+ a(y)v]}{|x-y|^{n-2s+2}}dydx \\
& ~\text{ }~+ \int_{B_R}|x|^2x [H(x)v^{p}+ a(x)v] \nabla h_R ~dx\\
 & =\frac{2s-n}{2}\int_{B_R} |x|^2[H(x)v^{p}+ a(x)v] \int_{B_R}\frac{H(y)v^{p}+ a(y)v}{|x-y|^{n-2s}}dydx \\
 & ~\text{ }~+\frac{2s-n}{4}\int_{B_R}\int_{B_R}\frac{(|x|^2-|y|^2)^2}{|x-y|^{n-2s+2}}(Hv^p+av)(x)(Hv^p+av)(y)dxdy\\
& ~\text{ }~+ \int_{B_R}|x|^2x [H(x)v^{p}+ a(x)v] \nabla h_R ~dx\\
 & =\frac{2s-n}{2}\int_{B_R} |x|^2[H(x)v(x)^{p}+ a(x)v(x)][v(x)-h_R(x)]dx \\
 & ~\text{ }~+\frac{2s-n}{4}\int_{B_R}\int_{B_R}\frac{(|x|^2-|y|^2)^2}{|x-y|^{n-2s+2}}(Hv^p+av)(x)(Hv^p+av)(y)dxdy\\
& ~\text{ }~+ \int_{B_R}|x|^2x [H(x)v^{p}+ a(x)v] \nabla h_R ~dx.
\end{align*}
On the other hand, by the divergence theorem,
\begin{align*}
\int_{B_R} |x|^2xH(x)\nabla v(x)^{p+1}dx & = -\int_{B_R}|x|^2[(n+2)H(x)+x\nabla H(x) ]v^{p+1}dx\\
 & ~\text{ }~ +R^3\int_{\partial B_R}H(x)v(x)^{p+1}d\sigma,
\end{align*}
and 
\begin{align*}
\int_{B_R} |x|^2x a(x)\nabla v(x)^{2}dx & = -\int_{B_R}|x|^2[(n+2)a(x)+x\nabla a(x)] v(x)^{2}dx\\
 & ~\text{ }~ +R^3\int_{\partial B_R}a(x)v^{2}d\sigma.
\end{align*}
Thus, Identity 1 follows immediately for $s>1/2$.

When $0<s\leq 1/2$, Identity 1 follows from an approximation argument as in the proof of Proposition 2.12 of \cite{JLX}.
 \end{proof}

 The following result shows the behavior of $\tau_i$ involving isolated simple blow up points.
 \begin{lemma}
 \label{gl3.8}
 Under the hypotheses in Lemma \ref{gl3.6}, we have that
 \begin{equation*}
 \tau_i=O(v_i(x_i)^{-c_1+o(1)}),
 \end{equation*}
where $c_1=\min\{1, 2/(n-2s)\}$. Thus
\begin{equation*}
v_i(x_i)^{\tau_i}=1+o(1).
\end{equation*}
 \end{lemma}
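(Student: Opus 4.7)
The plan is to apply Pohozaev Identity~2 from Proposition~\ref{gp3.7} with $H\equiv 1$ on a ball $B_\sigma(x_i)$ for fixed $\sigma\in(0,\min\{\rho,1\})$, and to extract $\tau_i$ from the fact that the coefficient of $\int v_i^{p_i+1}$ is precisely of order $\tau_i$ near the critical exponent. First I would split the equation \eqref{g3.1} as $v_i=\tilde v_i+h_\sigma$ with
\begin{equation*}
\tilde v_i(x)=\int_{B_\sigma(x_i)}\frac{v_i^{p_i}(y)+a_i(y)v_i(y)}{|x-y|^{n-2s}}\,dy,\qquad h_\sigma(x)=\int_{\mathbb{R}^n\setminus B_\sigma(x_i)}\frac{v_i^{p_i}(y)+a_i(y)v_i(y)}{|x-y|^{n-2s}}\,dy,
\end{equation*}
and show via the decay estimate in Lemma~\ref{gl3.6} and the argument leading to \eqref{g3.11} that $h_\sigma\in C^1(B_{\sigma/2}(x_i))$ with $\|h_\sigma\|_{C^1(B_{\sigma/2}(x_i))}=O(v_i(x_i)^{-1+o(1)})$.

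Next, I would apply Identity~2, translated so that $x_i$ is the origin. Taylor expansion gives $\frac{n-2s}{2}-\frac{n}{p_i+1}=-\frac{(n-2s)^2}{4n}\tau_i(1+o(1))$, so the identity rewrites schematically as
\begin{equation*}
\frac{(n-2s)^2}{4n}\tau_i\int_{B_\sigma(x_i)} v_i^{p_i+1}\,dx=s\int a_i v_i^2\,dx+\tfrac{1}{2}\int(x-x_i)\nabla a_i\,v_i^2\,dx+\mathcal{B}_i-\mathcal{R}_i,
\end{equation*}
where $\mathcal{B}_i$ collects the surface terms on $\partial B_\sigma(x_i)$ and $\mathcal{R}_i$ is the right-hand side involving $h_\sigma$ and $\nabla h_\sigma$. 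Proposition~\ref{gp3.1} then provides the lower bound $\int_{B_\sigma(x_i)}v_i^{p_i+1}\,dx\geq c_0>0$, since the rescaled $v_i$ converges to the standard bubble whose $L^{p_i+1}$-mass is bounded below.

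The bulk of the work is estimating the right-hand side of the displayed equation. Substituting $y=v_i(x_i)^{(p_i-1)/(2s)}(x-x_i)$ converts each $\int v_i^\alpha\,dx$ into a bubble integral weighted by a power of $v_i(x_i)$; one finds $\int v_i=\int v_i^{p_i}=O(v_i(x_i)^{-1+o(1)})$, and for the leading contribution $\int v_i^2$ a dichotomy arises: if $(1+|y|^2)^{-(n-2s)}$ is integrable on $\mathbb{R}^n$ then $\int v_i^2$ decays as a power determined purely by scaling, otherwise one must truncate at $|y|\leq\sigma v_i(x_i)^{(p_i-1)/(2s)}$ and the integral saturates at $v_i(x_i)^{-2+o(1)}$. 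Combining these with the $C^1$-bound on $h_\sigma$, both $\mathcal{B}_i$ and $\mathcal{R}_i$ are then seen to be $O(v_i(x_i)^{-c_1+o(1)})$ with $c_1=\min\{1,2/(n-2s)\}$, the minimum encoding precisely the above dichotomy. Dividing by the lower bound on $\int v_i^{p_i+1}$ yields $\tau_i=O(v_i(x_i)^{-c_1+o(1)})$, and since $c_1>0$, the identity $v_i(x_i)^{\tau_i}=\exp(\tau_i\log v_i(x_i))=1+o(1)$ follows immediately.

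The delicate point will be pinning down the exponent $c_1$ in the regime $n\leq 4s$, where the failure of integrability at infinity of the rescaled bubble forces a cut-off at the boundary scale; this is precisely what produces the saturation $c_1=1$ and prevents a single universal exponent.
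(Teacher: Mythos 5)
Your outline coincides with the paper's proof: split $v_i$ into the integral over a fixed ball plus a tail $h_i$ controlled in $C^1$ at size $v_i(x_i)^{-1+o(1)}$, apply Identity 2 of Proposition \ref{gp3.7}, use the bubble lower bound $\int v_i^{p_i+1}\geq c_0>0$ as in \eqref{g3.18}, and estimate the remaining terms by rescaling plus the decay of Lemma \ref{gl3.6}. However, your bookkeeping contains a sign error that matters. In Identity 2 the zero-order term enters as $-s\int a v^2$, so after moving the $O(\tau_i)$ coefficient across, both $s\int_{B_\sigma}a_i v_i^2$ and $\tfrac{n-2s}{2}\int [v_i^{p_i}+a_iv_i]\,h_i$ land on the favorable side (they are nonnegative) and are simply discarded, exactly as in \eqref{g3.17}; the only quantities that must be estimated are $\int |x-x_i|\,|\nabla a_i|\,v_i^2$, $\int |x-x_i|\,|\nabla h_i|\,[v_i^{p_i}+v_i]$ and the boundary integrals, all of which are $O(v_i(x_i)^{-c_1+o(1)})$. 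In your schematic identity $s\int a_i v_i^2$ appears with a plus sign on the right, so you are forced to treat the \emph{unweighted} $\int v_i^2$ as the leading error. That integral is genuinely of size $v_i(x_i)^{-\min\{2,\,4s/(n-2s)\}+o(1)}$ (it admits a matching lower bound), with dichotomy at $n=4s$ — not the dichotomy at $n-2s=2$ with exponents $\{1,\,2/(n-2s)\}$ that you claim it "encodes" — and for $s<1/2$ it is strictly larger than $v_i(x_i)^{-c_1}$: e.g.\ for $n=3$, $s=1/4$ your route gives only $\tau_i=O(v_i(x_i)^{-2/5+o(1)})$ against the claimed $c_1=4/5$. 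So as written the proposal does not establish the stated rate (though the weaker consequence $v_i(x_i)^{\tau_i}=1+o(1)$ still follows, since any positive rate suffices).

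A secondary point: applying the identity on $B_\sigma(x_i)$ requires $\nabla h_\sigma$ on the whole ball, and near $\partial B_\sigma$ it is not uniformly of size $v_i(x_i)^{-1+o(1)}$; one only gets the weighted bound $|\nabla h_i(x)|\leq C(\sigma-|x-x_i|)^{-\beta}v_i(x_i)^{-1+o(1)}$ with $\beta<1$, cf.\ \eqref{g3.15}--\eqref{g3.16}, which is integrable and suffices. Your $C^1(B_{\sigma/2})$ bound alone leaves the annulus next to $\partial B_\sigma$ uncontrolled, so this step needs the weighted estimate (or an equivalent device) to close.
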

\begin{proof}
By regularity theory in \cite{JLX} and Lemma \ref{gl3.6}, we have that
\begin{equation}
\label{g3.13}
\|v_i\|_{C^2(B_{3/2}(x_i)\backslash B_{1/2}(x_i))}\leq CR_i^{(n-2s)\tau_i}m_i^{-\lambda_i}.
\end{equation}
We choose $R_i\leq m_i^{o(1)}$ for work along the prove. We can write the equation \ref{g3.1} of $v_i$ as 
\begin{equation}
\label{g3.14}
v_i(x)=\int_{B_1(x_i)}\frac{v_i(y)^{p_i}+a_i(y)v_i(y)}{|x-y|^{n-2s}}dy+ h_i(x),
\end{equation}
where
\begin{equation*}
h_i(x)=\int_{|y-x_i|\geq 1}\frac{v_i(y)^{p_i}+a_i(y)v_i(y)}{|x-y|^{n-2s}}dy.
\end{equation*}

Now we will find some estimates to $h_i$ and its derivatives. By (\ref{g3.13}) and the computation in (\ref{g3.11}), we have for any $x\in B_1(x_i)$,
\begin{equation*}
h_i(x)\leq 2^{n-2s}\max_{\partial B_1(x_i)}v_i\leq C m_i^{-1+o(1)},
\end{equation*}
 for $|x-x_i|<7/8$,
\begin{equation}
\label{g3.15}
\begin{aligned}
|\nabla h_i(x)| & \leq C\int_{|y-x_i|\geq 1} \frac{v_i(y)^{p_i}+a_i(y)v_i(y)}{|x-y|^{n-2s+1}} dy \\
& \leq C\int_{|y-x_i|\geq 1} \frac{v_i(y)^{p_i}+a_i(y)v_i(y)}{|x-y|^{n-2s}} dy \\
& \leq  C\max_{\partial B_1(x_i)}v_i \leq C m_i^{-1+o(1)}.
\end{aligned}
\end{equation}
and  $7/8 \leq |x-x_i|<1$, 
\begin{equation}
\label{g3.16}
\begin{aligned}
|\nabla h_i(x)| & \leq C\int_{|y-x_i|\geq 1,~ |y-x|\geq 1/8 } + \int_{ |y-x_i|\geq 1, ~|y-x|<1/8}  \frac{v_i(y)^{p_i}+a_i(y)v_i(y)}{|x-y|^{n-2s}} dy \\
& \leq C \max_{\partial B_1(x_i)}v_i + C m_i^{-1+o(1)}\int_{ 1-|x-x_i|<|y-x|<1/8}  \frac{1}{|x-y|^{n-2s}} dy\\
& \leq \begin{cases}
			C \frac{|1-(1-|x-x_i|)^{2s-1}|}{|2s-1|} m_i^{-1+o(1)} ~ & \text{ if } s\neq \frac{1}{2}\\
			C |\text{ln}(1-|x-x_i|)|m_i^{-1+o(1)} ~ & \text{ if } s = \frac{1}{2}.\\
			\end{cases}\\
			& \leq C(1-|x-x_i|)^{-\beta} m_i^{-1+o(1)},
\end{aligned}
\end{equation}
where $0<\beta<1$ depends on $s$. Then, applying  Proposition \ref{gp3.7} Identity 2 to (\ref{g3.14}), it leads to
\begin{equation}
\label{g3.17}
\begin{aligned}
\tau_i \int_{B_1(x_i)}&  v_i(x)^{p_i+1}dx\\
 & \leq C \int_{B_1(x_i)}|x-x_i||\nabla h_i(x)|[v_i(x)^{p_i}+v_i(x)]dx \\
& +C \int_{B_1(x_i)}|x-x_i|v_i(x)^2dx + C\int_{\partial B_1(x_i)}[v_i(x)^{p_i+1}+v_i(x)^2]d\sigma.
\end{aligned}
\end{equation}
From Proposition \ref{gp3.1} and change of variables, we have that
\begin{equation}
\label{g3.18}
\begin{aligned}
\int_{B_1(x_i)} v_i(x)^{p_i+1} dx & \geq C \int_{B_{r_i}(x_i)}\frac{m_i^{p_i+1}}{[1+|m_i^{\frac{p_i-1}{2s}}(y-x_i)|^2]^{\frac{(n-2s)(p_i+1)}{2}}}dy\\
& \geq C m_i^{\tau_i(\frac{n}{2s}-1)}\int_{B_{R_i}(0)}\frac{1}{(1+|z|^2)^{\frac{(n-2s)(p_i+1)}{2}}}dz\\
& \geq C  m_i^{\tau_i(\frac{n}{2s}-1)}\geq C,
\end{aligned}
\end{equation}
\begin{equation*}
\begin{aligned}
 \int_{B_{r_i}(x_i)}&[v_i(x)^{p_i}+v_i(x)]dx \\
 & \leq C m_i^{-\frac{n(p_i-1)}{2s}}\int_{B_{R_i}(0)} [v_i(m_i^{-\frac{p_i-1}{2s}}z+x_i)]^{p_i}+v_i(m_i^{-\frac{p_i-1}{2s}}z+x_i)]dz\\
 & \leq C  m_i^{-\frac{n(p_i-1)}{2s}} \int_{B_{R_i}(0)}[ {m_i^{p_i}}{(1+|z|^2)^{-\frac{(n-2s)p_i}{2}}}+{m_i}{(1+|z|^2)^{-\frac{n-2s}{2}}}]dz\\
 & \leq C m_i^{-1+o(1)},
\end{aligned}
\end{equation*}
and 
\begin{equation*}
\int_{B_{r_i}(x_i)}|x-x_i|v_i(x)^2dx \leq C \begin{cases} m_i^{-\frac{2}{n-2s}+o(1)} & \text{ if } n \geq 4s+1\\
 m_i^{-2+o(1)}  & \text{ if } n< 4s+1.
\end{cases}
\end{equation*}
From Lemma \ref{gl3.6}, (\ref{g3.15}) and (\ref{g3.16}),
\begin{equation*}
\begin{aligned}
\int_{r_i<|x-x_i|<1}|x-x_i|v_i(x)^2dx & \leq C m_i^{-2\lambda_i} \int_{r_i<|x-x_i|<1}|x-x_i|(|x-x_i|^{2s-n+\delta_i})^2dx\\
& \leq C \begin{cases}
				 m_i^{-\frac{2}{n-2s}+ o(1)} & \text{ if } n> 4s+1\\
				m_i^{-2\lambda_i}  & \text{ if } n\leq 4s+1,
			\end{cases}
\end{aligned}
\end{equation*}
\begin{equation*}
\begin{aligned}
\int_{r_i<|x-x_i|<1}|\nabla h_i(x)|&[v_i(x)^{p_i}+v_i(x)]dx \\
&   \leq C \int_{r_i<|x-x_i|<\frac{7}{8}} + \int_{\frac{7}{8}<|x-x_i|<1}|\nabla h_i(x)|[v_i(x)^{p_i}+v_i(x)]dx\\
& \leq C m_i^{-2\lambda_i}
\end{aligned}
\end{equation*}
and 
\begin{equation*}
\int_{\partial B_1(x_i)}[v_i(x)^{p_i+1}+v_i(x)^2]d\sigma \leq C m_i^{-2\lambda_i+o(1)}.
\end{equation*}
Combining the above estimates in (\ref{g3.17}) and the fact $\tau_i=o(1)$, the lemma follows immediately.
\end{proof} 
 
 To obtain our desired upper bound of $v_i$ we need the next result.
\begin{lemma}
\label{gl3.9}
 Under the assumptions in Lemma \ref{gl3.6} we have that
 \begin{equation*}
 \limsup_{i \rightarrow \infty} \max_{x\in \partial B_{\theta}(x_i)}v_i(x)v_i(x_i)\leq C(\theta), \text{ for each } 0<\theta\leq 1.
 \end{equation*}
\end{lemma}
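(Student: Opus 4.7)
The plan is to read off the estimate directly from the pointwise upper bound already at hand in Lemma \ref{gl3.6} and to control the various $m_i$-powers that appear by pairing it with the rate of decay of $\tau_i$ furnished by Lemma \ref{gl3.8}; throughout I write $m_i := v_i(x_i)$.

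From Lemma \ref{gl3.6}, for any fixed $\theta \in (0,1]$ and all $i$ large enough that $r_i < \theta$, evaluating the bound on the sphere $|x - x_i| = \theta$ gives
\[
v_i(x)\, v_i(x_i) \leq C\, R_i^{(n-2s)\tau_i}\, m_i^{1-\lambda_i}\, \theta^{2s-n+\delta_i}, \qquad x \in \partial B_\theta(x_i).
\]
A direct substitution of $p_i = (n+2s)/(n-2s) - \tau_i$ into the definition $\lambda_i = (n-2s-\delta_i)(p_i-1)/(2s) - 1$ yields
\[
1 - \lambda_i = \frac{n-2s}{2s}\, \tau_i + \frac{2}{n-2s}\, \delta_i - \frac{1}{2s}\, \delta_i \tau_i,
\]
so $\lambda_i \to 1$, and the problem reduces to checking that each of the three factors $R_i^{(n-2s)\tau_i}$, $m_i^{1-\lambda_i}$, $\theta^{\delta_i}$ stays bounded as $i \to \infty$.

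For this I would choose $R_i = m_i^{\varepsilon_i}$ with a sequence $\varepsilon_i \to 0^+$ tending to zero slowly enough, which is permitted in Proposition \ref{gp3.1}. With such a choice $\delta_i = O(R_i^{-2s+o(1)}) = O(m_i^{-2s\varepsilon_i + o(1)})$, so $\delta_i \log m_i = o(1)$. By Lemma \ref{gl3.8}, $\tau_i = O(m_i^{-c_1 + o(1)})$ with $c_1>0$, hence $\tau_i \log m_i = o(1)$ and in particular $\varepsilon_i(n-2s)\tau_i \log m_i = o(1)$. Consequently
\[
R_i^{(n-2s)\tau_i} = 1 + o(1), \qquad m_i^{1-\lambda_i} = 1 + o(1), \qquad \theta^{\delta_i} = 1 + o(1),
\]
for each fixed $\theta > 0$. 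Taking the maximum over $\partial B_\theta(x_i)$ and passing to the limit superior one concludes
\[
\limsup_{i \to \infty}\, \max_{x \in \partial B_\theta(x_i)} v_i(x)\, v_i(x_i) \leq C\, \theta^{2s-n} =: C(\theta).
\]

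The only delicate point is verifying that $m_i^{1-\lambda_i}$ does not blow up: since $1 - \lambda_i$ is a linear combination of $\tau_i$ and $\delta_i$, this hinges squarely on the algebraic decay rate of $\tau_i$ proved in Lemma \ref{gl3.8} together with the freedom to pick $R_i$ as a slowly growing power of $m_i$. Once this bookkeeping is in place, the conclusion is a straightforward substitution $|x-x_i|=\theta$ into Lemma \ref{gl3.6}.
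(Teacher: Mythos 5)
Your argument is correct in substance, but it is a genuinely different route from the paper's. The paper proves the lemma by contradiction: assuming $v_i(\theta e+x_i)v_i(x_i)\to\infty$, it renormalizes $\phi_i=v_i(\theta e+x_i)^{-1+\tilde\delta_i}v_i$, uses the Harnack inequality and regularity to pass to a limit, identifies the limit of the inner integral as $a|x|^{2s-n}$, forces $a>0$ from the isolated-simple monotonicity of $r^{(n-2s)/2}\overline{\phi}(r)$, and contradicts the fact that the mass $\int_{B_{1/8}(x_i)}\bigl[v_i(\theta e+x_i)^{(p_i-1)(1-\tilde\delta_i)}\phi_i^{p_i}+a_i\phi_i\bigr]dy\to0$. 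You instead read the bound directly off Lemma \ref{gl3.6}, reducing everything to the boundedness of $m_i^{1-\lambda_i}=m_i^{\frac{(n-2s)\tau_i}{2s}}m_i^{\frac{\delta_i(p_i-1)}{2s}}$, which you control by Lemma \ref{gl3.8} (giving $m_i^{\tau_i}\to1$) together with an adaptive choice of $R_i$ in Proposition \ref{gp3.1} making $\delta_i\log m_i\to0$; your computation of $1-\lambda_i$ is exact, and since the hypotheses of Lemma \ref{gl3.9} do not fix $R_i$, instantiating Proposition \ref{gp3.1} and Lemma \ref{gl3.6} with $R_i$ depending on $m_i$ is legitimate and involves no circularity (Lemmas \ref{gl3.6} and \ref{gl3.8} are proved independently of Lemma \ref{gl3.9}, and the latter's conclusion is $R_i$-free). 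Your route is much shorter and yields the explicit constant $C(\theta)=C\theta^{2s-n}$ with $C=C(n,s,A_1,A_3,\rho)$; the paper's route, though longer, needs no re-invocation of Proposition \ref{gp3.1} with a new $R_i$, and its renormalized-limit machinery is reused later (Corollary \ref{gc3.12}, Proposition \ref{gp3.13}), so it is not wasted work. Two points you should tighten: first, Proposition \ref{gp3.1} with your chosen $R_i$ only holds after passing to a subsequence, so to bound the $\limsup$ of the full sequence you need the standard sub-subsequence argument (possible precisely because your constant does not depend on the extracted subsequence); second, ``$\varepsilon_i\to0$ slowly enough'' must be quantified, since $\delta_i=O\bigl(R_i^{-2s+o(1)}\bigr)=O\bigl(m_i^{\varepsilon_i(-2s+o(1))}\bigr)$, so $\delta_i\log m_i\to0$ requires $\varepsilon_i\log m_i$ to outgrow $\log\log m_i$ (for instance $R_i=e^{\sqrt{\log m_i}}$ works), while $\tau_i\log R_i\to0$ is automatic from Lemma \ref{gl3.8}.
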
 
\begin{proof}
By Lemma \ref{gl3.1}, it suffices to show the lemma for sufficiently small $\theta >0$. Suppose the contrary, then along a subsequence we have that
\begin{equation*}
\lim_{i\rightarrow \infty} v_i(\theta e+x_i)v_i(x_i)=+\infty \text{ with } |e|=1.
\end{equation*}
Since $v_i(x)\leq A_3 |x-x_i|^{-2s/(p_i-1)}$ in $B_2(x_i)$, it follows from Lemma \ref{gl3.1} that for any $0<\varepsilon<\theta$, there exists a positive constant $C_{\varepsilon}=C_{\varepsilon}(n,s,A_1,A_2,A_3,\varepsilon)$ such that
\begin{equation}
\label{g3.19}
\sup_{B_{\frac{5}{2}}(x_i)\backslash B_{\varepsilon}(x_i)}v_i \leq C_{\varepsilon} \inf_{B_{\frac{5}{2}}(x_i)\backslash B_{\varepsilon}(x_i)}v_i.
\end{equation}
Moreover, from Lemma \ref{gl3.6} and Lemma \ref{gl3.8},
\begin{equation}
\label{g3.20}
\begin{aligned}
v_i(\theta e+x_i)^{p_i-1}\rightarrow 0 & \text{ as } i\rightarrow +\infty,\\  v_i(\theta e+x_i)^{\tilde{\delta_i}} \leq C & \text{ for all }  i,
\end{aligned}
\end{equation}
where  $\tilde{\delta_i}= \delta_i(p_i-1)/2s$, $\delta_i$ is as in Lemma \ref{gl3.6} and $C$ is a positive constant that does not depends on $i$.

Let $\phi_i(x)= v_i( \theta e +x_i )^{-1+\tilde{\delta_i}}v_i(x)$. Then
\begin{equation*}
\phi_i(x) = \int_{\mathbb{R}^n}\frac{v_i(\theta e +x_i)^{(p_i-1)(1-\tilde{\delta_i})}\phi_i(y)^{p_i}+a_i(y)\phi_i(y)}{|x-y|^{n-2s}}dy,
\end{equation*}
and by (\ref{g3.19}) and (\ref{g3.20})
\begin{equation}
\label{g3.21} 
\phi_i(\theta e+x_i) = O(1) \text{ and } \|\phi_i\|_{L^{\infty}(B_{\frac{3}{2}}(x_i)\backslash B_{\varepsilon}(x_i))}\leq C_{\varepsilon} \text{ for } 0<\varepsilon<\theta.
\end{equation}

By regularity theory in \cite{JLX} to $\phi_i$, there exists $\phi \in C^2(B_1\backslash \{0\})$ such that, unless of subsequence, $\phi_i\rightarrow \phi$ in $C^2_{loc}(B_1\backslash \{0\})$.
By (\ref{g3.21}) and some computations as in (\ref{g3.11}), (\ref{g3.15}) and (\ref{g3.16}), there exists $h\in C^1(B_1)$ such that, unless of subsequence, 
\begin{equation}
\label{g3.22}
h_i\rightarrow h\geq 0 \text{ in } C^1_{loc}(B_1).
\end{equation}
Therefore,
\begin{equation*}
 \int_{B_1(x_i)}\frac{v_i(\theta e +x_i)^{(p_i-1)(1-\tilde{\delta_i})}\phi_i(y)^{p_i}+a_i(y)\phi_i(y)}{|x-y|^{n-2s}}dy = \phi_i(x)-h_i(x) \rightarrow \phi(x)-h(x) 
\end{equation*}
in $C^1_{loc}(B_1\backslash \{0\})$. 

On other hand, from (\ref{g3.20}), Lemma \ref{gl3.6} and Lemma \ref{gl3.8}, 
\begin{equation}
\label{g3.23}
\begin{aligned}
\int_{B_1(x_i)\backslash B_{\varepsilon}(x_i)}\frac{a_i(y)\phi_i(y)}{|x-y|^{n-2s}} dy & \leq  \frac{C \varepsilon^{2s-n+\delta_i}}{[v_i(\theta e+x_i)v_i(x_i)]^{1-\tilde{\delta_i}}}\int_{B_1(x_i)\backslash   B_{\varepsilon}(x_i)}\frac{1}{|x-y|^{n-2s}}dy \\
& \leq   \frac{C\varepsilon^{2s-n+\delta_i}}{[v_i(\theta e+x_i)v_i(x_i)]^{1-\tilde{\delta_i}}} \rightarrow 0 \text{ as } i\rightarrow +\infty.
\end{aligned}
\end{equation}

Now we define $G(x):=\phi(x)-h(x)$. We will determine the explicit form of $G$. For any $|x|> 0$, $2|x_i|<\varepsilon<|x-x_i|/2$ and $3\varepsilon<|x|$, from (\ref{g3.20}),  (\ref{g3.21}) and (\ref{g3.23}),  we have that
\begin{equation*}
\begin{aligned}
G(x) & = \lim_{i\rightarrow \infty}\left( \int_{B_1(x_i)\backslash B_{\varepsilon}(x_i)} + \int_{B_{\varepsilon}(x_i)} \frac{v_i(\theta e +x_i)^{(p_i-1)(1-\tilde{\delta_i})}\phi_i(y)^{p_i}+a_i(y)\phi_i(y)}{|x-y|^{n-2s}}dy \right)\\
& = \lim_{i\rightarrow \infty} \int_{B_{\varepsilon}(x_i)} \frac{v_i(\theta e +x_i)^{(p_i-1)(1-\tilde{\delta_i})}\phi_i(y)^{p_i}+a_i(y)\phi_i(y)}{|x-y|^{n-2s}}dy\\
& = |x|^{2s-n}(1+O(\varepsilon))\lim_{i\rightarrow \infty} \int_{B_{\varepsilon}(x_i)} [v_i(\theta e +x_i)^{(p_i-1)(1-\tilde{\delta_i})}\phi_i(y)^{p_i}+a_i(y)\phi_i(y)]dy \\
& = |x|^{2s-n}(1+O(\varepsilon))a(\varepsilon),
\end{aligned}
\end{equation*} 
for some non-negative function $a(\varepsilon)$ of $\varepsilon$. Since $a(\varepsilon)$ is nondecreasing, so $\lim_{\varepsilon\rightarrow 0}a(\varepsilon)$ exists and we denote as $a$. Taking $\varepsilon \rightarrow 0$, we have that
\begin{equation*}
G(x)=a|x|^{2s-n}.
\end{equation*}
Since $x_i\rightarrow 0$ is an isolated simple blow point, it follows from Proposition \ref{gp3.1} that $r^{(n-2s)/2}\overline{\phi}(r)$ is nonincreasing for all $0 <r<\rho$, i.e., for any $0 <r_1 \leq r_2<\rho$,
\begin{equation*}
r_1^{(n-2s)/2}\overline{\phi}(r_1)\geq r_2^{(n-2s)/2}\overline{\phi}(r_2).
\end{equation*}
It follows that $\phi$ has to have a singularity at $0$, and thus, $a>0$. Hence,
\begin{equation*}
\lim_{i\rightarrow \infty}\int_{B_{\frac{1}{8}}(x_i)} [v_i(\theta e +x_i)^{(p_i-1)(1-\tilde{\delta_i})}\phi_i(y)^{p_i}+a_i(y)\phi_i(y)] dy > C^{-1}G\left(\frac{e}{2} \right) >0.
\end{equation*}
However, from Proposition \ref{gp3.1}, Lemma \ref{gl3.6} and Lemma \ref{gl3.8}, 
\begin{equation*}
\begin{aligned}
\int_{B_{\frac{1}{8}}(x_i)} & [v_i(\theta e +x_i)^{(p_i-1)(1-\tilde{\delta_i})}\phi_i(y)^{p_i}+a_i(y)\phi_i(y)] dy \\
& \leq C v_i(\theta e +x_i)^{-1+\tilde{\delta_i}}\int_{B_{\frac{1}{8}}(x_i)}[v_i(y)^{p_i}+v_i(y)] dy\\
& \leq C \frac{1}{[ v_i(\theta e +x_i)v_i(x_i)]^{1-\tilde{\delta_i}}}\rightarrow 0 \text{ as } i\rightarrow \infty.
\end{aligned}
\end{equation*}
This is a contradiction.

Therefore, the lemma is proved.
\end{proof}
   
 \begin{proposition}
 \label{gp3.10}
 Under the assumptions in Lemma \ref{gl3.6} we have that
 \begin{equation*}
 v_i(x)\leq C v_i(x_i)^{-1}|x-x_i|^{2s-n} \text{ for  all } |x-x_i|\leq 1.
 \end{equation*}
 \end{proposition}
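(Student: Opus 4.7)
The plan is to split the ball $B_1(x_i)$ into the bubble region $|x-x_i|\le r_i$ and the outer annulus $r_i\le|x-x_i|\le 1$, and to prove the bound $v_i(x)v_i(x_i)\le C|x-x_i|^{2s-n}$ in each region separately.

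In the bubble region, Proposition \ref{gp3.1} gives $v_i(x)\le Cm_i(1+m_i^{(p_i-1)/s}|x-x_i|^2)^{(2s-n)/2}$. Multiplying by $m_i$ and extracting $|x-x_i|^{2s-n}$ produces a prefactor $m_i^{2+(2s-n)(p_i-1)/(2s)}=m_i^{(n-2s)\tau_i/(2s)}$, which equals $1+o(1)$ by Lemma \ref{gl3.8}. For the inner sub-region $|x-x_i|\le m_i^{-(p_i-1)/(2s)}$, the trivial bound $v_i\le m_i$ combined with the same exponent identity $m_i^2|x-x_i|^{n-2s}\le m_i^{(n-2s)\tau_i/(2s)}$ yields the same conclusion.

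In the outer region, I would bootstrap Lemma \ref{gl3.6}, which provides $v_i(y)\le Cm_i^{-\lambda_i+o(1)}|y-x_i|^{2s-n+\delta_i}$ with $\lambda_i\to 1$ and $\delta_i\to 0$, by inserting it into the integral representation
\[
v_i(x)=\int_{\mathbb{R}^n}\frac{v_i^{p_i}(y)+a_i(y)v_i(y)}{|x-y|^{n-2s}}\,dy
\]
and splitting the domain into $B_{r_i}(x_i)$, $B_1(x_i)\setminus B_{r_i}(x_i)$, and $B_1(x_i)^c$. On $B_{r_i}(x_i)$, rescaling $z=m_i^{(p_i-1)/(2s)}(y-x_i)$ and using the uniform convergence of Proposition \ref{gp3.1} together with $\int\xi_s^{p_i}<\infty$ gives a contribution of size $Cm_i^{p_i-n(p_i-1)/(2s)+o(1)}|x-x_i|^{2s-n}$; since the exponent simplifies to $p_i-n(p_i-1)/(2s)=-1+(n-2s)\tau_i/(2s)$, this is $Cm_i^{-1}|x-x_i|^{2s-n}$ by Lemma \ref{gl3.8}, and the linear piece $a_iv_i$ contributes a negligible factor. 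On $B_1(x_i)\setminus B_{r_i}(x_i)$, inserting the Lemma \ref{gl3.6} bound and rescaling $y=x_i+|x-x_i|w$ reduces the estimate to a $w$-integral dominated by its contribution near $w=0$, producing a bound of the form $CR_i^{-2s}m_i^{-1+o(1)}|x-x_i|^{2s-n}$, which is negligible since $R_i\to\infty$. On $B_1(x_i)^c$, the same trick as in (\ref{g3.11}) (comparing $|x-y|^{n-2s}$ with $|\tilde x-y|^{n-2s}$ for $\tilde x\in\partial B_1(x_i)$) combined with Lemma \ref{gl3.9} gives $Cm_i^{-1}$, which is dominated by $Cm_i^{-1}|x-x_i|^{2s-n}$ on $|x-x_i|\le 1$.

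The main obstacle is tracking the exponents of $m_i$ and $R_i$ sharply enough: the inner piece produces prefactors $m_i^{(n-2s)\tau_i/(2s)}$ and $R_i^{(n-2s)\tau_i}$, and the middle piece produces $R_i^{-2s}$; the first two must be $1+o(1)$ and the third must be $o(1)$. Together these force the choice $R_i\to\infty$ with $R_i\le m_i^{o(1)}$ already used in Lemma \ref{gl3.8}, in tandem with the decay $\tau_i=O(m_i^{-c_1+o(1)})$ from that lemma. Once these are in place, the three contributions add to the claimed bound $Cm_i^{-1}|x-x_i|^{2s-n}$.
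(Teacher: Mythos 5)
Your outline for the bubble region coincides with the paper's (Proposition \ref{gp3.1} plus Lemma \ref{gl3.8}), but your treatment of the outer region is a genuinely different route: the paper argues by contradiction, assuming $v_i(\tilde x_i)v_i(x_i)|\tilde x_i-x_i|^{n-2s}\to\infty$ for some points with $r_i\le|\tilde x_i-x_i|\le 1$, rescales by $\mu_i=|\tilde x_i-x_i|$ so that the bad point lands on a sphere of radius one, and applies Lemma \ref{gl3.9} to the rescaled sequence; you instead use Lemma \ref{gl3.9} only for the far-field term and try to extract the sharp bound on the annulus by feeding Lemma \ref{gl3.6} back into the integral equation. Your exterior estimate (the comparison trick of \eqref{g3.11} plus Lemma \ref{gl3.9} at $\theta=1$) is fine, and the bubble contribution does come out as $Cm_i^{-1}|x-x_i|^{2s-n}$, though to pull out $|x-x_i|^{2s-n}$ when $|x-x_i|$ is comparable to $r_i$ you need the Riesz-potential identity for the bubble (as in the proof of Lemma \ref{gl3.6}), not merely $\int\xi_s^{p_i}<\infty$, since $z$ may be close to $m_i^{(p_i-1)/(2s)}(x-x_i)$.

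The genuine gap is in the annulus step, where you claim a bound $CR_i^{-2s}m_i^{-1+o(1)}|x-x_i|^{2s-n}$ and call it ``negligible since $R_i\to\infty$''. The target is the \emph{sharp} bound $Cm_i^{-1}|x-x_i|^{2s-n}$, so an $m_i^{o(1)}$ prefactor is not automatically harmless: $m_i^{o(1)}$ may tend to infinity and may beat $R_i^{-2s}$ (for instance $m_i^{\delta_i}=\exp(\delta_i\ln m_i)$ with $\delta_i=O(R_i^{-2s+o(1)})$ is unbounded unless $R_i$ grows at least like a power of $\ln m_i$). For the nonlinear term $v_i^{p_i}$ the dangerous factors do cancel, but only after an explicit computation: the annulus integral is dominated near the radius $r_i$, and since $m_i^{-\lambda_i p_i}r_i^{\delta_i p_i}=R_i^{\delta_i p_i}m_i^{-p_i+(n-2s)\tau_i p_i/(2s)}$, multiplying by $r_i^{-2s+(n-2s)\tau_i}$ yields exactly $R_i^{-2s}(1+o(1))m_i^{-1}$ by Lemma \ref{gl3.8} and $R_i\le m_i^{o(1)}$; this cancellation, not the mere presence of $R_i^{-2s}$, is what saves you, and it must be displayed. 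For the linear term $a_iv_i$ on the annulus there is no such cancellation and no gain of $R_i^{-2s}$: inserting Lemma \ref{gl3.6} gives a contribution bounded by $CR_i^{(n-2s)\tau_i}m_i^{-\lambda_i}|x-x_i|^{2s-n+\delta_i}$, and since $\lambda_i=1-\frac{(n-2s)\tau_i}{2s}-\frac{\delta_i(p_i-1)}{2s}$, this is $C(1+o(1))m_i^{-1}m_i^{\delta_i(p_i-1)/(2s)}|x-x_i|^{2s-n}$, where $m_i^{\delta_i(p_i-1)/(2s)}$ is bounded only if $\delta_i\ln m_i=O(1)$, i.e.\ only if you additionally choose $R_i\gtrsim(\ln m_i)^{1/(2s)}$ — a calibration compatible with, but not implied by, the constraint $R_i\le m_i^{o(1)}$ that you invoke. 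So either impose and record this lower bound on the growth of $R_i$ (allowed by Proposition \ref{gp3.1}) and redo the annulus bookkeeping with the cancellations made explicit, or adopt the paper's shorter rescaling argument, which bypasses Lemma \ref{gl3.6} on the annulus entirely and rests on Lemma \ref{gl3.9} alone.
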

\begin{proof}
For $|x-x_i|\leq r_i$, the proposition follows immediately from Proposition \ref{gp3.1} and Lemma \ref{gl3.8}. To establish the inequality in the proposition for $r_i\leq |x-x_i|\leq 1$, we will suppose the contrary and scale the problem to reduce to case of $|x-x_i|=1$. Suppose that exists a subsequence $\{\tilde{x}_i\}_{i\in \mathbb{N}}$ such that $r_i\leq |\tilde{x}_i-x_i| \leq 1$ and 
\begin{equation*}
\lim_{i\rightarrow \infty} v_i(\tilde{x}_i)v_i(x_i)|\tilde{x}_i-x_i|^{n-2s} = \infty.
\end{equation*}

Set $\mu_i:=|\tilde{x}_i-x_i|$, $\tilde{v}_i(x)=\mu_i^{2s/(p_i-1)}v_i(\mu_i x + x_i)$. Then, each $\tilde{v}_i$ satisfies
\begin{equation*}
\tilde{v}_i(x)=\int_{\mathbb{R}^n}\frac{\tilde{v}_i(y)^{p_i}+\mu_i^{2s}a_i(\mu_i y+x_i)\tilde{v}_i(y)}{|x-y|^{n-2s}}dy.
\end{equation*}
Thus, we have all hypotheses of Lemma \ref{gl3.9}  for $\{\tilde{v}_i\}_{i\in \mathbb{N}}$, with $0$ being an isolated simple blow point. Hence,  
\begin{equation*}
\tilde{v}_i(0)\tilde{v}_i(\mu_i^{-1}(\tilde{x}_i-x_i))\leq C.
\end{equation*}
It follows that
\begin{equation*}
\lim_{i\rightarrow \infty} v_i(\tilde{x}_i)v_i(x_i)|\tilde{x}_i-x_i|^{n-2s}\leq +\infty.
\end{equation*}
This is a contradiction, which concludes the proof.
\end{proof} 
 
 \begin{corollary}
 \label{gc3.11}
 Assume as in Lemma \ref{gl3.6}. We have
 \begin{equation*}
 \int_{B_{1}(x_i)}|x-x_i|^{\alpha}v_i(x)^{p_i+1}dy  = \begin{cases}
 																											O(v_i(x_i)^{-\frac{2\alpha}{n-2s}}), & \text{ if } -n<\alpha<n,\\
 																											O(v_i(x_i)^{-\frac{2n}{n-2s}}\ln v_i(x_i)), & \text{ if } \alpha =n,\\
 																											O(v_i(x_i)^{-\frac{2n}{n-2s}}), &\text{ if } \alpha>n,
 																											
 																											\end{cases}
 \end{equation*}
 and
  \begin{equation*}
 \int_{B_{1}(x_i)}|x-x_i|^{\alpha}v_i(x)^{2}dy  = \begin{cases}
 																											O(v_i(x_i)^{-\frac{4s+2\alpha}{n-2s}}), & \text{ if } 4s+\alpha<n,\\
 																											O(v_i(x_i)^{-2}\ln v_i(x_i)), & \text{ if } 4s+\alpha=n,\\
 																											O(v_i(x_i)^{-2}), &\text{ if } 4s+\alpha>n.
 																											
 																											\end{cases}
 \end{equation*}
 \end{corollary}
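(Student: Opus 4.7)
The plan is to split each integral into an inner piece over $B_{r_i}(x_i)$ and an outer piece over $B_1(x_i)\setminus B_{r_i}(x_i)$, where $r_i=R_i m_i^{-(p_i-1)/(2s)}$ is the radius from Proposition \ref{gp3.1}, with $m_i=v_i(x_i)$ and $R_i\to\infty$ chosen slowly (say $R_i\leq m_i^{o(1)}$). On the inner region I will invoke the bubble approximation of Proposition \ref{gp3.1}, and on the outer region I will use the sharp decay bound $v_i(x)\leq C m_i^{-1}|x-x_i|^{2s-n}$ from Proposition \ref{gp3.10}. Recall $(p_i-1)/(2s) = 2/(n-2s) - \tau_i/(2s)$ and $p_i+1=2n/(n-2s)-\tau_i$, so all factors of the form $m_i^{\tau_i\cdot(\text{bounded})}$ will be absorbed into a $(1+o(1))$ multiplicative constant by Lemma \ref{gl3.8}.

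For the first identity, the change of variables $y=x_i+m_i^{-(p_i-1)/(2s)}z$ on the inner region yields a contribution
\begin{equation*}
m_i^{-2\alpha/(n-2s)}(1+o(1))\int_{|z|\leq R_i}|z|^\alpha\,\xi_s(z)^{p_i+1}\,dz,\qquad \xi_s(z)=\left(\tfrac{2}{1+|z|^2}\right)^{\frac{n-2s}{2}},
\end{equation*}
whose integrand decays like $|z|^{\alpha-2n+O(\tau_i)}$ at infinity; hence the inner integral is bounded for $\alpha<n$, of order $\ln R_i$ for $\alpha=n$, and of order $R_i^{\alpha-n}$ for $\alpha>n$. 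On the outer region, Proposition \ref{gp3.10} gives
\begin{equation*}
\int_{r_i\leq |y-x_i|\leq 1}|y-x_i|^\alpha v_i^{p_i+1}\,dy\leq C m_i^{-(p_i+1)}\int_{r_i}^{1}r^{\alpha-n-1+O(\tau_i)}\,dr,
\end{equation*}
which evaluates, in the three cases, to $O(r_i^{\alpha-n})$, to $-\ln r_i\sim (2/(n-2s))\ln m_i$, and to $O(1)$. Using $r_i^{\alpha-n}=R_i^{\alpha-n}m_i^{-2(\alpha-n)/(n-2s)+o(1)}$ and the prefactor $m_i^{-(p_i+1)}=m_i^{-2n/(n-2s)+O(\tau_i)}$, and adding the inner and outer contributions, one obtains exactly the three stated bounds.

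The second identity is handled identically, but with $v_i^{p_i+1}$ replaced by $v_i^2$ and the outer bound $v_i^2\leq C m_i^{-2}|y-x_i|^{2(2s-n)}$. After rescaling, the inner integrand decays like $|z|^{\alpha-2(n-2s)}$ at infinity, so the critical threshold shifts from $\alpha=n$ to $\alpha+4s=n$; the outer integral becomes $\int_{r_i}^{1}r^{\alpha+4s-n-1}\,dr$ with the analogous trichotomy, and the prefactor becomes $m_i^{-(4s+2\alpha)/(n-2s)+O(\tau_i)}$.

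The only subtlety lies in the critical and supercritical cases. In the critical case the $\ln m_i$ factor arises from the outer integral via $-\ln r_i\sim (2/(n-2s))\ln m_i$, the inner contribution $\ln R_i$ being negligible since $R_i\leq m_i^{o(1)}$. In the supercritical case one must verify that the inner contribution $R_i^{\alpha-n}$ (respectively $R_i^{\alpha+4s-n}$) does not overwhelm the target bound; this is the main book-keeping point, and it is ensured by the slow growth of $R_i$, together with the strict sign of $-2\alpha/(n-2s)+2n/(n-2s)$ (respectively $-(4s+2\alpha)/(n-2s)+2$) in that regime.
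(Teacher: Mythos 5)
Your proposal is correct and follows essentially the same route as the paper, whose proof of Corollary \ref{gc3.11} is precisely the combination of Proposition \ref{gp3.1} (bubble profile on the inner region $|x-x_i|\leq r_i$), Proposition \ref{gp3.10} (the decay bound $v_i\leq Cm_i^{-1}|x-x_i|^{2s-n}$ on the outer region), and Lemma \ref{gl3.8} (to absorb the $m_i^{O(\tau_i)}$ factors). Your bookkeeping of the three regimes, including the logarithm in the critical case and the slow choice $R_i\leq m_i^{o(1)}$ in the supercritical case, matches the intended computation.
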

 \begin{proof}
 It follows from Proposition \ref{gp3.1}, Lemma \ref{gl3.8} and Proposition \ref{gp3.10}.
 \end{proof}
 
 By the proof of Lemma \ref{gl3.9}, we have the following corollary.
 \begin{corollary}
 \label{gc3.12}
 Assume as in Lemma \ref{gl3.6}. Let $T_i(x)=T_i'(x)+T_{i}''(x)$, where
 \begin{equation*}
 T_i'(x)=v_i(x_i)\int_{B_1(x_i)}\frac{v_i^{p_i}(y)+\mu_i a_i(y)v_i(y)}{|x-y|^{n-2s}}dy,
 \end{equation*}
  \begin{equation*}
 T_i''(x)=v_i(x_i)\int_{\mathbb{R}^n \backslash B_1(x_i)}\frac{v_i^{p_i}(y)+\mu_i a_i(y)v_i(y)}{|x-y|^{n-2s}}dy,
 \end{equation*}
 and $\mu_i\rightarrow 0$ as $i\rightarrow \infty$.
 Then, unless of subsequence, 
 \begin{equation*}
 T_i'(x) \rightarrow d_0|x|^{2s-n} \text{ in } C^2_{loc}(B_1\backslash \{0\})
 \end{equation*}
 and 
 \begin{equation*}
 T_i''(x)\rightarrow h(x) \text{ in } C^1_{loc}(B_1)
 \end{equation*}
 for some $h\in C^1(B_1)$, where
 \begin{equation}
 d_0=\left( \frac{\pi^\frac{n}{2}\Gamma(s)}{\Gamma(\frac{n}{2}+s)} \right)^{-\frac{n}{2s}}\int_{\mathbb{R}^n}(1+|y|^2)^{\frac{2s-n}{2}}dy.
 \end{equation}
 Consequently, we have that
\begin{equation}
v_i(x_i)v_i(x)\rightarrow d_0|x|^{2s-n}+h(x) \text{ in } C^1_{loc}(B_1\backslash \{0\}).
\end{equation}
 \end{corollary}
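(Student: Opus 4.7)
The plan is to adapt the convergence arguments already developed inside the proof of Lemma \ref{gl3.9} and run them separately on the two integration regions, and then identify the explicit constant $d_0$. I would split the argument into three stages, one for each of $T_i''$, $T_i'$, and the ``consequently'' assembly.

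First, for $T_i''(x)$: since the integration is over $\{|y - x_i| \ge 1\}$, the kernel $|x-y|^{-(n-2s)}$ is smooth and uniformly bounded for $x$ in compact subsets of $B_1$. The contribution of the $\mu_i a_i v_i$ term is $O(\mu_i) = o(1)$ because $\mu_i \to 0$ and $v_i(x_i) \int a_i v_i / |x-y|^{n-2s}\,dy$ is uniformly bounded (by the argument used in (\ref{g3.11}) combined with Lemma \ref{gl3.9}). The $v_i^{p_i}$ piece gives a uniformly bounded family, and differentiating once in $x$ under the integral (legitimate since $|x - y| \ge c(K) > 0$ for $x \in K \Subset B_1$ and $y \notin B_1(x_i)$) yields uniform $C^1$ bounds. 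Arzel\`a--Ascoli, after passing to a subsequence, produces the limit $h \in C^1(B_1)$.

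Second, for $T_i'(x)$ on $B_1 \setminus \{0\}$: decompose $B_1(x_i) = B_{r_i}(x_i) \cup (B_1(x_i) \setminus B_{r_i}(x_i))$ with $r_i = R_i m_i^{-(p_i-1)/(2s)}$. On the outer annulus, I would apply Proposition \ref{gp3.10} to conclude $v_i^{p_i}(y) \le C m_i^{-p_i}|y-x_i|^{(2s-n)p_i}$ and compute directly that both $v_i^{p_i}$ and $\mu_i a_i v_i$ contributions vanish uniformly on compact subsets of $B_1 \setminus \{0\}$ (the former is $O(R_i^{-2s + o(1)})$ after invoking Lemma \ref{gl3.8}, the latter is $O(\mu_i)$). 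On the inner ball I perform the rescaling $y = x_i + m_i^{-(p_i-1)/(2s)} z$: Proposition \ref{gp3.1} supplies $v_i(y) = m_i U(z)(1 + o(1))$ with $U(z) = (1+|z|^2)^{(2s-n)/2}$ in $C^2(B_{R_i})$, while for $|x - x_i| \ge \delta$ the factor $|x - y|^{2s-n}$ equals $|x - x_i|^{2s-n}(1 + o(1))$ uniformly in $|z| \le R_i$, since $r_i \to 0$. Feeding this in, and using Lemma \ref{gl3.8} to neutralize $m_i^{\tau_i} = 1 + o(1)$, the inner piece converges to $|x|^{2s-n}$ times a constant integral $\int_{\mathbb{R}^n} U^{(n+2s)/(n-2s)}\,dz$; after reconciling the Riesz potential normalization implicit in (\ref{g3.1}) with the normalization of the bubble in Proposition \ref{gp3.1}, this constant equals $d_0$. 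Upgrading $C^0_{loc}$ to $C^2_{loc}$ is done by applying exactly the same analysis to $\partial_x^\alpha |x - y|^{2s - n}$ for $|\alpha| \le 2$, which only changes integer powers of $|x - x_i|$ in the outer estimates.

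Finally, the ``consequently'' statement comes from writing $v_i(x_i) v_i(x) = T_i(x) + v_i(x_i)(1 - \mu_i) \int_{\mathbb{R}^n} a_i v_i / |x-y|^{n-2s}\,dy$ and noting that the extra term is handled by the same control on $a_i v_i$ already used in the analysis of $T_i''$, yielding a further $C^1_{loc}(B_1)$-perturbation which can be absorbed into $h$. The main obstacle is the precise identification of $d_0$: matching the bubble normalization in Proposition \ref{gp3.1} with the absence of a constant coefficient in the integral equation (\ref{g3.1}) is bookkeeping but must be done carefully so that the constant dropping out of the inner rescaling is exactly the $d_0$ claimed in the statement.
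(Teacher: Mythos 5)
Your overall strategy is the same as the paper's: the paper's two-line proof simply notes that $v_i(x_i)v_i = T_i'+T_i''$ and that ``all the ingredients'' are already in the proof of Lemma \ref{gl3.9} (inner/outer splitting around $x_i$, Proposition \ref{gp3.1} for the rescaled bubble, Proposition \ref{gp3.10} and Lemma \ref{gl3.8} for the outer region, and the computations of type (\ref{g3.11})--(\ref{g3.16}) for $T_i''=h_i$), while the value of $d_0$ is quoted from \cite[Corollary 2.19]{JLX}; you instead compute the constant directly from the rescaling, which is fine in principle. However, one step of your argument would fail as written. You upgrade the convergence of $T_i'$ to $C^1_{loc}$ and $C^2_{loc}$ by ``applying the same analysis to $\partial_x^\alpha|x-y|^{2s-n}$ for $|\alpha|\le 2$'', but the singular point $y=x$ lies inside the domain of integration $B_1(x_i)$, and the differentiated kernels $|x-y|^{2s-n-1}$ and $|x-y|^{2s-n-2}$ are not locally integrable when $s\le 1/2$, respectively $s\le 1$; since the statement is needed for all $s\in(0,n/2)$, differentiation under the integral sign is not available, and smallness of the density $v_i^{p_i}+\mu_i a_iv_i$ near $y=x$ does not cure the non-integrability. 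The correct route, which is what the paper uses inside the proof of Lemma \ref{gl3.9}, is to invoke the regularity estimates of \cite{JLX} for the integral equation (together with bounds of the form (\ref{g3.13}) away from $x_i$) to get uniform local $C^2$ control and then pass to the limit.

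A second, smaller point concerns your ``consequently'' step. You keep equation (\ref{g3.1}) with coefficient $a_i$ and write $v_i(x_i)v_i(x)=T_i(x)+v_i(x_i)(1-\mu_i)\int_{\mathbb{R}^n}a_iv_i|x-y|^{2s-n}dy$, absorbing the last term into $h$ as a $C^1_{loc}(B_1)$ perturbation. That term is a full-space Riesz potential of a density whose mass concentrates at $x_i\to 0$; its convergence is not established, and its natural bound (via Proposition \ref{gp3.10}) behaves like $|x-x_i|^{4s-n}$ when $n>4s$, so it cannot be absorbed into a function $h\in C^1(B_1)$ that is simultaneously claimed to be the $C^1_{loc}(B_1)$ limit of $T_i''$. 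The intended hypothesis --- the one used both in the paper's own proof (where $\phi_i=T_i'+T_i''$ exactly) and in the application inside Proposition \ref{gp3.13} --- is that $v_i$ solves the integral equation whose linear coefficient already carries the factor $\mu_i$, so that $T_i'(x)+T_i''(x)=v_i(x_i)v_i(x)$ identically and the final convergence follows at once from the first two. With these two repairs your argument coincides with the paper's.
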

 \begin{proof}
 As in the proof of Lemma \ref{gl3.9}, we set $\phi_i(x)=v_i(x_i)v_i(x)$, which satisfies
 \begin{equation*}
 \begin{aligned}
 \phi_i(x) & = \int_{\mathbb{R}^n}\frac{v_i(x_i)^{1-p_i}v_i(y)^{p_i}+\mu_i a_i(y)\phi_i(y)}{|x-y|^{n-2s}}dy\\
 & =\int_{B_1(x_i)}\frac{v_i(x_i)^{1-p_i}\phi_i(y)^{p_i}+\mu_i a_i(y)\phi_i(y)}{|x-y|^{n-2s}}dy+h_i(x) = T_i'(x)+T_i''(x).
 \end{aligned}
\end{equation*}  
Then we have all the ingredients as in the proof of Lemma \ref{gl3.9}. Besides, we can use the Proposition \ref{gp3.10} to simplify the computations.  

On the other hand, the value of  the positive constant $d$ is shown in \cite[Corollary 2.19]{JLX}.
 \end{proof}

\subsection{Isolated blow up points have to be isolated simple} \
 
 Let $v_i$ be a nonnegative smooth solutions of 
 \begin{equation}
 \label{g3.26}
 v_i(x)=\int_{\mathbb{R}^n}\frac{H_i^{\tau_i}(y)v_i^{p_i}(y)+ a_i(y)v_i(y)}{|x-y|^{n-2s}}dy \text{ for } x\in \Omega=B_3,
 \end{equation}
 where $a_i$ satisfies (\ref{g3.2}), and, for some positive constants $A_4$ and $A_5$, $H_i\in C^{1,1}(\Omega)$ satisfies
 \begin{equation}
 \label{g3.27}
 A_4^{-1}\leq H_i(x) \leq A_4 \text{ for } x\in \Omega,~\|H_i\|_{C^{1,1}(\Omega)}\leq A_5.
 \end{equation}
 We assume that $x_i\rightarrow 0$ is an isolated blow up point of $\{v_i\}_{i\in \mathbb{N}}$. If $x_i\rightarrow 0$ is an isolated simple blow up point, then Proposition \ref{gp3.1} and Proposition \ref{gp3.10} hold for the solutions $v_i$ of (\ref{g3.26}).
 
 The following result shows that an isolated blow up point is also an isolated simple blow up point.
 \begin{proposition}
 \label{gp3.13}
Let $v_i$ be a nonnegative solution of \eqref{g3.26}, where $H_i$ and $a_i$ satisfy \eqref{g3.2} and \eqref{g3.27}, respectively. Suppose that $x_i\rightarrow 0$ is an isolated blow up point of $\{v_i\}_{i\in \mathbb{N}}$ in $B_2$ for some constant positive constant $A_3$, i.e., 
 \begin{equation*}
 |x-x_i|^{\frac{2s}{p_i-1}}v_i(x)\leq A_3 \text{ in } B_2.
 \end{equation*}
 Then $x_i\rightarrow 0$ is an isolated simple blow up point.
 \end{proposition}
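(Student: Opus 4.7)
The plan is to argue by contradiction. Suppose $x_i\to 0$ is an isolated blow-up point which is \emph{not} isolated simple. Then for every $\rho>0$ the function $\bar w_i(r):=r^{2s/(p_i-1)}\bar v_i(r)$ has at least two critical points in $(0,\rho)$ for infinitely many $i$. Because the bubble rescaling of $v_i$ at scale $m_i^{-(p_i-1)/(2s)}$ (with $m_i:=v_i(x_i)$) converges in $C^2_{loc}$ to the standard Aubin--Talenti profile, whose spherical average has a \emph{unique} critical radius, I may select $\mu_i\to 0$ with $\mu_i\, m_i^{(p_i-1)/(2s)}\to\infty$ such that $\mu_i$ is the smallest critical point of $\bar w_i$ lying strictly above the bubble scale.

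I would then rescale by $\tilde v_i(y):=\mu_i^{2s/(p_i-1)}v_i(\mu_i y+x_i)$. A direct change of variables shows $\tilde v_i$ satisfies an equation of the same form as \eqref{g3.26} with $\tilde H_i(y):=H_i(\mu_i y+x_i)$ and $\tilde a_i(y):=\mu_i^{2s}a_i(\mu_i y+x_i)$, so the structural bounds \eqref{g3.2} and \eqref{g3.27} persist (in fact $\tilde a_i\to 0$ uniformly on compact sets). The scaling identity $\overline{\tilde w}_i(r)=\bar w_i(\mu_i r)$ together with the choice of $\mu_i$ yields two key facts: $r=1$ is a critical point of $\overline{\tilde w}_i$, and $0$ is an isolated \emph{simple} blow-up point of $\{\tilde v_i\}$ on any $B_\rho$ with $\rho<1$; in particular $\tilde v_i(0)\to\infty$.

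At this stage every result from Subsection~3.1 applies to the rescaled sequence $\tilde v_i$. In particular Corollary~\ref{gc3.12} gives, after passing to a subsequence,
\begin{equation*}
\tilde v_i(0)\,\tilde v_i(y)\longrightarrow G(y):=d_0\,|y|^{2s-n}+h(y)\quad\text{in }C^1_{loc}(\overline{B_1}\setminus\{0\}),
\end{equation*}
for some $d_0>0$ and $h\in C^1(\overline{B_1})$. Passing to the limit in the critical-point relation $\frac{d}{dr}\overline{\tilde w}_i(1)=0$ yields an explicit linear identity between $d_0$, $\bar h(1)$, and $\bar h'(1)$.

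The crux of the argument is then to apply the Pohozaev identity of Proposition~\ref{gp3.7} (Identity~2) to $\tilde v_i$ on $B_1$, normalize by the appropriate power of $\tilde v_i(0)$, and pass to the limit. Using Lemma~\ref{gl3.8} to absorb the $\tau_i$-terms, Corollary~\ref{gc3.11} to bound the interior integrals of $\tilde v_i^{p_i+1}$ and $\tilde v_i^2$, the decay $\tilde a_i=O(\mu_i^{2s})$, the upper bound in Proposition~\ref{gp3.10}, and the $C^1$-convergence $\tilde v_i(0)\tilde v_i\to G$ away from $0$, all interior contributions vanish in the limit and the identity reduces to a boundary statement on $\partial B_1$ expressed through $G$. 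Combined with the critical-point relation at $r=1$ derived above, this forces a sign condition incompatible with $d_0>0$, yielding the desired contradiction. The main technical obstacle is exactly this bookkeeping of the three different scales $\tau_i$, $\mu_i$, and $\tilde v_i(0)$, ensuring that the surviving term in the Pohozaev identity is precisely the boundary contribution of the Green-type limit $G$; once this is carried out, the contradiction is immediate and the proof of Proposition~\ref{gp3.13} is complete.
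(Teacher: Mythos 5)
Your setup coincides with the paper's: argue by contradiction, take $\mu_i$ to be the second critical point of $r^{2s/(p_i-1)}\overline v_i(r)$, rescale $\tilde v_i(y)=\mu_i^{2s/(p_i-1)}v_i(\mu_i y+x_i)$, observe that $0$ becomes an isolated \emph{simple} blow up point of $\{\tilde v_i\}$ with $\rho=1$, and invoke Corollary \ref{gc3.12} to get $\tilde v_i(0)\tilde v_i\to d_0|y|^{2s-n}+h$. The divergence, and the gap, is in your endgame. For the integral Pohozaev identities of Proposition \ref{gp3.7} the identity does \emph{not} ``reduce to a boundary statement on $\partial B_1$'': after multiplying by $\tilde v_i(0)^2$, the boundary terms $\frac{R}{p_i+1}\int_{\partial B_R}H\tilde v_i^{p_i+1}$ and $\frac{R}{2}\int_{\partial B_R}\tilde a_i\tilde v_i^2$ are $O(\tilde v_i(0)^{-2n/(n-2s)})$ and $O(\mu_i^{2s}\tilde v_i(0)^{-2})$ by Proposition \ref{gp3.10}, hence vanish in the limit. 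The term that survives, and that the paper uses to produce the contradiction, is an \emph{interior} term: the right-hand side $\int_{B_\delta}(H\tilde v_i^{p_i}+\tilde a_i\tilde v_i)\bigl(\tfrac{n-2s}{2}b_i+x\nabla b_i\bigr)dx$, where $b_i$ is the tail of the integral representation; since $\tilde v_i(0)\,b_i(x)\geq 2^{2s-n}b_i(0)\to 2^{2s-n}d_0>0$ directly from the integral formula, this term is bounded below by $c\,\tilde v_i(0)^{-2}$ with $c>0$, while the left-hand side is $\leq o(\tilde v_i(0)^{-2})$. If, as you assert, all interior contributions vanished and only boundary terms remained, the normalized identity would collapse to $0=0$ and yield no contradiction at all; moreover the critical-point relation at $r=1$, which you propose to combine with it, is not needed here (the positivity of the regular part is automatic from the nonnegative tail integrand), and passing to the limit exactly at $r=1$ is anyway delicate because Corollary \ref{gc3.12} only gives $C^1_{loc}(B_1\setminus\{0\})$ convergence, not convergence up to $\partial B_1$.

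A second, quantitative gap is the scale bookkeeping you defer. To kill the unsigned interior terms $\int_{B_\delta}|x\nabla H_i^{\tau_i}(\mu_i x)|\tilde v_i^{p_i+1}$ and $\mu_i^{2s}\int_{B_\delta}|x\nabla a_i(\mu_i x)|\tilde v_i^{2}$ after multiplying by $\tilde v_i(0)^2$, one needs $\tau_i\lesssim \tilde v_i(0)^{-2}$ and $\mu_i^{2s}\lesssim \tilde v_i(0)^{-2+\frac{4s}{n-2s}}$ (with logarithmic variants in the borderline dimensions). Quoting Lemma \ref{gl3.8} for the original sequence only gives $\tau_i=O(v_i(x_i)^{-c_1+o(1)})$, and since $\tilde v_i(0)=\mu_i^{2s/(p_i-1)}v_i(x_i)\ll v_i(x_i)$ this does not imply $\tau_i=o(\tilde v_i(0)^{-2})$; likewise ``$\tilde a_i=O(\mu_i^{2s})$'' says nothing about $\mu_i^{2s}$ versus $\tilde v_i(0)$. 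This is exactly why the paper re-applies Lemma \ref{gl3.14} and Lemma \ref{gl3.16} to the rescaled sequence $\tilde v_i$ (its ``Affirmation''), exploiting the positive lower bound $A_1^{-1}\leq a_i$ to convert Lemma \ref{gl3.16} into the needed bound on $\mu_i^{2s}$. Without these two points — identifying the surviving positive tail term and establishing the Affirmation-type bounds for the rescaled sequence — the proposed proof does not close.
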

 
  In order to prove the above proposition, we need the following estimates.
\begin{lemma}
\label{gl3.14}
  Assume as in Proposition \ref{gp3.13}. In addition, suppose that $x_i\rightarrow 0$ is an isolated simple blow up point. Then, for $i$ large enough,
  \begin{equation*}
  \tau_i \leq C m_i^{-2} +C (|\nabla a_i(x_i)|+ \|\nabla a_i\|_{C^{0,1}(B_2)})\times\begin{cases}
  			m_i^{-\frac{4s+2}{n-2s}} & \text{if } 4s+1 < n,\\
  			m_i^{-2}\ln m_i & \text{if } 4s+1=n,\\
  			m_i^{-2} & \text{if } 4s+1> n,\\
  			\end{cases}
  \end{equation*}
  where $m_i=v_i(x_i)$.
\end{lemma}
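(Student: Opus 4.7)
The plan is to apply Identity 2 of the Pohozaev-type Proposition \ref{gp3.7} to the equation \eqref{g3.26} on the ball $B_1(x_i)$ (centered at $x_i$), writing $v_i$ as $v_i(x)=\int_{B_1(x_i)}\frac{H_i^{\tau_i}v_i^{p_i}+a_iv_i}{|x-y|^{n-2s}}\,dy+\tilde h_i(x)$ with $\tilde h_i$ the tail over $\mathbb{R}^n\setminus B_1(x_i)$. Applying Identity 2 with $H=H_i^{\tau_i}$, $p=p_i$, $R=1$ centered at $x_i$, and Taylor expanding $\tfrac{n-2s}{2}-\tfrac{n}{p_i+1}=-\tfrac{(n-2s)^2}{4n}\tau_i(1+o(1))$, one obtains an identity whose leading left-hand contribution is $-C_0\tau_i\int H_i^{\tau_i}v_i^{p_i+1}-s\int a_iv_i^2$ plus the boundary terms, with the $H_i^{\tau_i}$-gradient term $-\frac{1}{p_i+1}\int(x-x_i)\nabla H_i^{\tau_i}v_i^{p_i+1}=O(\tau_i)\cdot\int|x-x_i|v_i^{p_i+1}=O(\tau_im_i^{-2/(n-2s)})$ by Corollary \ref{gc3.11}, which can be absorbed into the $\tau_i$-term on the left once $m_i$ is large, using $\int H_i^{\tau_i}v_i^{p_i+1}\geq c>0$ (cf.\ \eqref{g3.18} and $H_i^{\tau_i}=1+o(1)$).

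After rearranging so that the two nonnegative quantities $C_0\tau_i\int H_i^{\tau_i}v_i^{p_i+1}$ and $s\int a_iv_i^2$ are on one side, and taking absolute values of the remaining terms, one arrives at
\[
C_0\tau_i\int H_i^{\tau_i}v_i^{p_i+1}+s\int a_iv_i^2\leq \mathrm{Bdry}+|\mathrm{RHS}|+\tfrac{1}{2}\Bigl|\int(x-x_i)\nabla a_iv_i^2\,dx\Bigr|+o(\tau_i).
\]
Dropping the nonnegative $s\int a_iv_i^2$ (using $a_i>0$) converts this into an upper bound on $\tau_i$ alone. The boundary term is $O(m_i^{-2})$ since by Lemma \ref{gl3.1}/Proposition \ref{gp3.10} we have $v_i\leq Cm_i^{-1}$ on $\partial B_1(x_i)$, and $|\mathrm{RHS}|=O(m_i^{-2})$ by mimicking the estimates \eqref{g3.15}--\eqref{g3.16} and $\int v_i^{p_i}+\int v_i=O(m_i^{-1})$ obtained in the proof of Lemma \ref{gl3.8}.

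The decisive step is handling $\int(x-x_i)\nabla a_iv_i^2$: Taylor expand $\nabla a_i(x)=\nabla a_i(x_i)+O(\|\nabla a_i\|_{C^{0,1}(B_2)}|x-x_i|)$ to split
\[
\int(x-x_i)\nabla a_iv_i^2=\nabla a_i(x_i)\!\cdot\!\!\int(x-x_i)v_i^2+\int(x-x_i)\bigl[\nabla a_i(x)-\nabla a_i(x_i)\bigr]v_i^2,
\]
which is bounded in modulus by $|\nabla a_i(x_i)|\int|x-x_i|v_i^2+\|\nabla a_i\|_{C^{0,1}(B_2)}\int|x-x_i|^2v_i^2$. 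Apply Corollary \ref{gc3.11} with $\alpha=1$ and $\alpha=2$; since $(4s+4)/(n-2s)>(4s+2)/(n-2s)$ the $\alpha=2$ contribution is dominated by the $\alpha=1$ one in every regime, so combining the bounds produces exactly the factor $m_i^{-(4s+2)/(n-2s)}$, $m_i^{-2}\ln m_i$, or $m_i^{-2}$ according as $4s+1$ is less than, equal to, or greater than $n$. Assembling everything and dividing by $C_0\int H_i^{\tau_i}v_i^{p_i+1}$ yields the claim. I expect the main obstacle to be getting the $O(m_i^{-2})$ estimate of $|\mathrm{RHS}|$ uniformly in $s\in(0,n/2)$, since $|\nabla\tilde h_i|$ has an integrable but unbounded singularity as $|x-x_i|\to 1$; once this is handled as in the proof of Lemma \ref{gl3.8}, the remaining argument is careful bookkeeping of the exponents coming from Corollary \ref{gc3.11}.
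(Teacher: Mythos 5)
Your proposal is correct and follows essentially the same route as the paper's proof: decompose $v_i$ with the tail $h_i$ over $\mathbb{R}^n\setminus B_1(x_i)$, apply Pohozaev Identity 2 of Proposition \ref{gp3.7}, bound the boundary and tail terms by $O(m_i^{-2})$ via Proposition \ref{gp3.10} and the gradient estimates of type \eqref{g3.15}--\eqref{g3.16} (cf.\ \eqref{g3.29}), absorb the $\nabla H_i^{\tau_i}$ term into the left-hand side using Corollary \ref{gc3.11} and the lower bound \eqref{g3.18}, and then split $\nabla a_i(x)=\nabla a_i(x_i)+O(\|\nabla a_i\|_{C^{0,1}(B_2)}|x-x_i|)$ and invoke Corollary \ref{gc3.11} with $\alpha=1,2$. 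Your bookkeeping of the exponents (including the observation that the $\alpha=2$ contribution is dominated by the $\alpha=1$ rate in every regime) matches the paper's conclusion.
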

\begin{proof}
Let us write equation (\ref{g3.26}) as 
\begin{equation}
\label{g3.28}
v_i(x)=\int_{B_1(x_i)}\frac{H_i(y)^{\tau_i }v_i(y)^{p_i}+a_i(y)v_i(y)}{|x-y|^{n-2s}}dy+h_i(x),
\end{equation}
where 
\begin{equation*}
h_i(x)=\int_{\mathbb{R}^n\backslash B_1(x_i)}\frac{H_i(y)^{\tau_i }v_i(y)^{p_i}+a_i(y)v_i(y)}{|x-y|^{n-2s}}dy.
\end{equation*}
From Proposition \ref{gp3.10} and the proof of (\ref{g3.15}) and (\ref{g3.16}), we have that
\begin{equation}
\label{g3.29}
|\nabla h_i(x)| \leq C\begin{cases}
										m_i^{-1} & \text{ if } 0<|x-x_i|<\frac{7}{8},\\
										(1-|x-x_i|^{-\beta})m_i^{-1} &\text{ if } \frac{7}{8}<|x-x_i|<1,
										\end{cases}
\end{equation}
where $0<\beta<1$ depends on $s$. Using Proposition \ref{gp3.7} Identity 2, Lemma \ref{gl3.8}, Proposition \ref{gp3.10} and the same proof of Lemma \ref{gl3.8}, we have
\begin{align*}
\notag
 \tau_i &\leq C m_i^{-2}+C\int_{B_1(x_i)}|x-x_i||\nabla H_i(x)^{\tau_i}|v_i(x)^{p_i+1}dx \notag \\
 & + C\int_{B_1(x_i)}|x-x_i||\nabla a_i(x)|v_i^2dx  \notag \\
& \leq C m_i^{-2}+C\tau_i \int_{B_1(x_i)}|x-x_i|v_i(x)^{p_i+1}dx +C|\nabla a_i(x_i)| \int_{B_1(x_i)}|x-x_i|v_i(x)^2dx \notag \\
&  + C\int_{B_1(x_i)}|x-x_i||\nabla a_i(x) -\nabla a_i(x_i)|v_i(x)^2dx. 
\end{align*}
By Corollary \ref{gc3.11}, the second term of the right hand side is less than $\tau_i/2$ for $i $ large enough, and thus,
\begin{equation*}
\begin{aligned}
 \tau_i & \leq C m_i^{-2} + C|\nabla a_i(x_i)| \int_{B_1(x_i)}|x-x_i|v_i(x)^2dx \\
  &+C \|\nabla a_i\|_{C^{0,1}(B_2)} \int_{B_1(x_i)}|x-x_i|^2v_i(x)^2dx.
 \end{aligned}
\end{equation*}
Therefore, the lemma follows from Corollary \ref{gc3.11}.
\end{proof}

\begin{lemma}
  \label{gl3.15}
   Assume as in Lemma \ref{gl3.14}. Then
    \begin{align*}
| \nabla  a_i(x_i)| \leq C \begin{cases}
  						((\ln m_i)^{-1})(1+\|\nabla a_i\|_{C^{0,1}(B_2)}) & \text{ if } n=4s,\\
  						m_i^{-2+\frac{4s}{n-2s}}(1+\|\nabla a_i\|_{C^{0,1}(B_2)}) & \text{ if } 4s<n<4s+1,\\
  						m_i^{-2+\frac{4s}{n-2s}}(1+\|\nabla a_i\|_{C^{0,1}(B_2)}\ln m_i) & \text{ if } n=4s+1,\\
  						m_i^{-2+\frac{4s}{n-2s}}+m_i^{-\frac{4}{n-2s}}\|\nabla a_i\|_{C^{0,1}(B_2)} & \text{ if } n>4s+1.\\
  						\end{cases}
\end{align*} 
\end{lemma}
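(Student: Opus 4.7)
\smallskip

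\textbf{Proof plan for Lemma \ref{gl3.15}.} The plan is to derive a Pohozaev-type identity adapted to extract a single partial derivative $\partial_k a_i(x_i)$ for each $k=1,\dots,n$, then use Corollary \ref{gc3.11}, Proposition \ref{gp3.10}, and Lemma \ref{gl3.14} in a bootstrap to isolate $|\nabla a_i(x_i)|$. Concretely, I would start from the integral equation (\ref{g3.28}) written on $B_1(x_i)$ and write $v_i = \int_{B_1(x_i)} K(x,y)[H_i^{\tau_i}v_i^{p_i}+a_i v_i](y)\,dy + h_i(x)$ with $K(x,y)=|x-y|^{2s-n}$. Applying a componentwise Pohozaev identity obtained by testing the equation against a cutoff of $\partial_{k}v_i$ on $B_1(x_i)$ (the analogue of Identity~2 of Proposition \ref{gp3.7} with the radial vector field $x\cdot\nabla$ replaced by the constant vector field $\partial_k$), one obtains for each $k$
\begin{equation*}
\frac{1}{p_i+1}\int_{B_1(x_i)}\partial_k(H_i^{\tau_i})\,v_i^{p_i+1}\,dx
+\frac12\int_{B_1(x_i)}\partial_k a_i\cdot v_i^2\,dx
= \mathcal{B}_i^{(k)}+\mathcal{H}_i^{(k)},
\end{equation*}
where $\mathcal{B}_i^{(k)}$ collects boundary integrals on $\partial B_1(x_i)$ and $\mathcal{H}_i^{(k)}$ collects contributions from the tail term $h_i$.

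The boundary integrals $\mathcal{B}_i^{(k)}$ and tail contributions $\mathcal{H}_i^{(k)}$ are both $O(m_i^{-2})$: for $\mathcal{B}_i^{(k)}$ I would invoke Proposition \ref{gp3.10} (so $v_i|_{\partial B_1(x_i)} = O(m_i^{-1})$ and hence $v_i^{p_i+1}+v_i^2 = O(m_i^{-2})$ there), and for $\mathcal{H}_i^{(k)}$ I would use the gradient bound (\ref{g3.29}) on $h_i$ together with the $L^{p_i+1}$ and $L^2$ estimates on $v_i$ from Corollary \ref{gc3.11}. The first term on the left-hand side is bounded by $C\tau_i \int_{B_1(x_i)}v_i^{p_i+1}\leq C\tau_i$ by (\ref{g3.27}) and Corollary \ref{gc3.11} with $\alpha=0$.

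For the key second term on the left, I would split $\partial_k a_i(x) = \partial_k a_i(x_i) + \bigl(\partial_k a_i(x) - \partial_k a_i(x_i)\bigr)$ to obtain
\begin{equation*}
\frac{\partial_k a_i(x_i)}{2}\int_{B_1(x_i)} v_i^2\,dx
\;=\;O(\tau_i)+O(m_i^{-2})+O\!\left(\|\nabla a_i\|_{C^{0,1}(B_2)}\int_{B_1(x_i)}|x-x_i|\,v_i^2\,dx\right).
\end{equation*}
Now I would plug in Corollary \ref{gc3.11}: $\int v_i^2$ is of order $m_i^{-4s/(n-2s)}$ when $4s<n$ (and $m_i^{-2}\ln m_i$ when $4s=n$), and $\int |x-x_i|v_i^2$ is $O(m_i^{-(4s+2)/(n-2s)})$ when $4s+1<n$ (with logarithmic or $O(m_i^{-2})$ replacements in the remaining cases). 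Substituting the bound on $\tau_i$ from Lemma \ref{gl3.14} introduces a term $C|\nabla a_i(x_i)|\cdot m_i^{-(4s+2)/(n-2s)}$ (or its logarithmic/bounded variant) on the right, which can be absorbed on the left since, after dividing by $\tfrac12\int v_i^2$, its coefficient is $O(m_i^{-2/(n-2s)})=o(1)$. Examining the four regimes $n=4s$, $4s<n<4s+1$, $n=4s+1$, $n>4s+1$ separately, the balance between $\int v_i^2$ on the left and $\int |x-x_i|v_i^2$ on the right reproduces exactly the four dichotomies stated in the lemma.

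The main obstacle is the first one: rigorously deriving the componentwise Pohozaev identity in the nonlocal integral-equation setting, since the identities in Proposition \ref{gp3.7} are built from the scaling field $x\cdot\nabla$ and do not directly isolate a single partial derivative of $a_i$. I expect this to require testing (\ref{g3.28}) against a smooth cutoff of $\partial_k v_i$ and carefully handling the double integral $\iint K(x,y)\bigl[H_i^{\tau_i}v_i^{p_i}+a_iv_i\bigr](y)\,\partial_k v_i(x)\,dy\,dx$ via symmetrization in $(x,y)$ and an integration by parts in $y$ using $\partial_k^x K=-\partial_k^y K$; the translation invariance of $K$ is what makes the bilinear term cancel and the identity close up, while the nonsmoothness of the kernel for $s<1/2$ is handled by an approximation as in the proof of Identity~1 in Proposition \ref{gp3.7}. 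Once this identity is in hand, the estimates above together with the bootstrap through Lemma \ref{gl3.14} yield the stated dichotomy.
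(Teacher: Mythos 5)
Your proposal is correct and follows essentially the same route as the paper: the paper derives exactly this componentwise (translation) Pohozaev identity from (\ref{g3.28}) — the bilinear double integral with the antisymmetric kernel $(x-y)_j|x-y|^{2s-n-2}$ cancels by symmetry, with an approximation argument for $s<1/2$ as in Proposition \ref{gp3.7} — then bounds the boundary, $\tau_i$ and $\nabla h_i$ contributions via Proposition \ref{gp3.10}, Corollary \ref{gc3.11} and (\ref{g3.29}), splits $\nabla a_i(x)$ around $x_i$ by the triangle inequality, divides by $\int_{B_1(x_i)}v_i^2$, and absorbs the $|\nabla a_i(x_i)|$ term coming from Lemma \ref{gl3.14}, exactly as you outline. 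The only slip is a citation: the needed lower bound $\int_{B_1(x_i)}v_i^2\gtrsim m_i^{-\frac{4s}{n-2s}}$ (resp.\ $m_i^{-2}\ln m_i$ when $n=4s$) does not follow from Corollary \ref{gc3.11}, which gives only upper bounds, but from Proposition \ref{gp3.2} and Lemma \ref{gl3.8} as in the paper's estimate (\ref{g3.31}).
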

\begin{proof}
From (\ref{g3.28}) and integrating by parts, we have
\begin{align*}
&\frac{1}{p_i+1}\int_{\partial B_1(x_i)}(x-x_i)_j H_i(x)^{\tau_i}v_i^{p_i+1}d\sigma - \frac{1}{p_i+1}\int_{B_1(x_i)}\partial_jH_i(x)^{\tau_i}v_i^{p_i+1}dx\\
& ~\text{ }~+\frac{1}{2}\int_{\partial B_1(x_i)}(x-x_i)_j a_i(x)v_i^2d\sigma - \frac{1}{2}\int_{B_1(x_i)}\partial_ja_i(x)v_i^2dx\\
&= \frac{1}{p_i+1}\int_{B_1(x_i)}H_i(x)^{\tau_i}\partial v_i^{p_i+1}dx + \frac{1}{2}\int_{B_1(x_i)}a_i(x)\partial v_i^2dx\\
&=(2s-n)\int_{B_1(x_i)}(H_i^{\tau_i}v_i^{p_i}+a_iv_i)\int_{B_1(x_i)}\frac{(x-y)_j}{|x-y|^{n-2s+2}}(H_i^{\tau_i}v_i^{p_i}+a_i v_i)dydx\\
&~\text{ }~ +\int_{B_1(x_i)}(H_i^{\tau_i}v_i^{p_i}+a_iv_i)\partial_j h_i dx \\
&= \int_{B_1(x_i)}(H_i^{\tau_i}v_i^{p_i}+a_iv_i)\partial_j h_i dx.
\end{align*}
(If $s<1/2$), then one can use approximation arguments as in the proof of Proposition \ref{gp3.7}.)
Hence,
\begin{align*}
\left| \int_{B_1(x_i)}\partial_ja_i(x)v_i^2dx  \right| & \leq C \int_{\partial B_1(x_i)} (v_i(x)^{p_i+1}+v_i(x)^2) d\sigma + C\tau_i \int_{B_1(x_i)}v_i(x)^{p_i+1}dx\\
& + C\int_{B_1(x_i)}(v_i(x)^{p_i} + v_i(x))|\nabla h_i(x)|dx.
\end{align*}
It follows from Proposition \ref{gp3.10}, Corollary \ref{gc3.11} and (\ref{g3.29}) that
\begin{equation*}
\left| \int_{B_1(x_i)}\partial_ja_i(x)v_i^2dx  \right| \leq  C m_i^{-2} + C \tau_i.
\end{equation*}
Using the triangle inequality, we have that
\begin{align}
\notag |\nabla a_i(x_i)| & \int_{B_1(x_i)}v_i^2dx\\
 & \leq C \left| \int_{B_1(x_i)}\nabla a_i(x)v_i^2dx \right| + C\|\nabla a_i\|_{C^{0,1}(B_2)}\int_{B_1(x_i)} |x-x_i|v_i^2dx \\
\label{g3.30}
& \leq C m_i^{-2} + C\tau_i + C\|\nabla a_i\|_{C^{0,1}(B_2)}\int_{B_1(x_i)} |x-x_i|v_i(x)^2dx.
\end{align}
By Proposition \ref{gp3.2}, Lemma \ref{gl3.8} and change of variables,
\begin{equation}
\label{g3.31}
\begin{aligned}
\int_{B_1(x_i)}v_i(x)^2dx & \geq C^{-1} m_i^{-\frac{4s}{n-2s}}\int_{B_{m_i^{\frac{p_i-1}{2s}}}}(1+|y|^2)^{2s-n}dy\\
& \geq C^{-1} \begin{cases}
m_i^{-2}\ln m_i & \text{ if } 4s=n\\
m_i^{-\frac{4s}{n-2s}} & \text{ if } 4s<n.
\end{cases}
\end{aligned}
\end{equation}

Therefore, for $i$ large enough, the lemma follows from Corollary \ref{gc3.11}  and combining Lemma \ref{gl3.14}, (\ref{g3.30}) and (\ref{g3.31}). 
\end{proof}

\begin{lemma}
\label{gl3.16}
  Assume as in Lemma \ref{gl3.14}. Then
  \begin{align*}
  a_i(x_i)\leq C \begin{cases}
  						((\ln m_i)^{-1})(1+\|\nabla a_i\|_{C^{0,1}(B_2)}) & \text{ if } n=4s,\\
  						m_i^{-2+\frac{4s}{n-2s}}(1+\|\nabla a_i\|_{C^{0,1}(B_2)}) & \text{ if } 4s<n<4s+1,\\
  						m_i^{-2+\frac{4s}{n-2s}}(1+\|\nabla a_i\|_{C^{0,1}(B_2)}\ln m_i) & \text{ if } n=4s+1,\\
  						m_i^{-2+\frac{4s}{n-2s}}+m_i^{-\frac{4}{n-2s}}\|\nabla a_i\|_{C^{0,1}(B_2)} & \text{ if } n>4s+1.\\
  						\end{cases}
\end{align*}   
\end{lemma}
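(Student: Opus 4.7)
The plan is to mimic the proof of Lemma \ref{gl3.15} but to use Identity 2 of Proposition \ref{gp3.7} directly (without first differentiating the equation), so that the principal quantity one isolates is $s\int_{B_1(x_i)} a_i v_i^2$ rather than $\int_{B_1(x_i)} \partial_j a_i\,v_i^2$. The pointwise value $a_i(x_i)$ will then be extracted from this integral by a Taylor expansion of $a_i$ around $x_i$.

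Concretely, rewriting \eqref{g3.26} as \eqref{g3.28} and applying Identity 2 of Proposition \ref{gp3.7} centered at $x_i$ (with $R=1$, $H=H_i^{\tau_i}$, $a=a_i$, $h_R=h_i$), one obtains an identity of the form
\begin{align*}
s\int_{B_1(x_i)} a_i v_i^2\,dx
&= \Bigl(\tfrac{n-2s}{2}-\tfrac{n}{p_i+1}\Bigr)\int_{B_1(x_i)} H_i^{\tau_i} v_i^{p_i+1}\,dx \\
&\quad -\tfrac{1}{p_i+1}\int_{B_1(x_i)} (x-x_i)\cdot\nabla H_i^{\tau_i}\,v_i^{p_i+1}\,dx
 -\tfrac{1}{2}\int_{B_1(x_i)} (x-x_i)\cdot\nabla a_i\,v_i^2\,dx \\
&\quad + \text{boundary terms} + (h_i\text{-terms}).
\end{align*}
The coefficient $\tfrac{n-2s}{2}-\tfrac{n}{p_i+1}$ is $O(\tau_i)$, and $|\nabla H_i^{\tau_i}|$ is of size $\tau_i$; by Proposition \ref{gp3.10} together with the pointwise estimates \eqref{g3.29} on $\nabla h_i$ (exactly as in the treatment of the analogous quantities in the proof of Lemma \ref{gl3.14}), the boundary and $h_i$ contributions are $O(m_i^{-2})$. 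Applying Corollary \ref{gc3.11} to the weighted $v_i^{p_i+1}$-integrals one is left with
\begin{equation*}
\Bigl|\int_{B_1(x_i)} a_i v_i^2\,dx\Bigr| \;\leq\; C(m_i^{-2}+\tau_i) + C\int_{B_1(x_i)}|x-x_i|\,|\nabla a_i|\,v_i^2\,dx.
\end{equation*}

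Next, I would Taylor-expand, $|a_i(x)-a_i(x_i)|\leq |\nabla a_i(x_i)|\,|x-x_i|+\|\nabla a_i\|_{C^{0,1}(B_2)}|x-x_i|^2$, to pass from $\int a_i v_i^2$ to $a_i(x_i)\int v_i^2$; and similarly apply the triangle inequality $|\nabla a_i(x)|\leq |\nabla a_i(x_i)|+\|\nabla a_i\|_{C^{0,1}(B_2)}|x-x_i|$ inside the last integral above. Combining,
\begin{equation*}
a_i(x_i)\int_{B_1(x_i)} v_i^2 \leq C(m_i^{-2}+\tau_i)+ C|\nabla a_i(x_i)|\int_{B_1(x_i)}|x-x_i|v_i^2 + C\|\nabla a_i\|_{C^{0,1}(B_2)}\int_{B_1(x_i)}|x-x_i|^2 v_i^2.
\end{equation*}

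Finally, I would divide by $\int_{B_1(x_i)} v_i^2$ using the lower bound from \eqref{g3.31} ($\geq C m_i^{-4s/(n-2s)}$ for $n>4s$, or $\geq C m_i^{-2}\ln m_i$ for $n=4s$), invoke Corollary \ref{gc3.11} at $\alpha=1,2$ to control the numerators, and substitute the bound on $\tau_i$ from Lemma \ref{gl3.14} and on $|\nabla a_i(x_i)|$ from Lemma \ref{gl3.15}. The four regimes $n=4s$, $4s<n<4s+1$, $n=4s+1$, $n>4s+1$ stated in Lemma \ref{gl3.16} correspond precisely to the three case splits of Corollary \ref{gc3.11} applied at $\alpha=1$ and $\alpha=2$. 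The main obstacle will be this case-by-case bookkeeping: one must verify that in each of the four regimes, after feeding in Lemma \ref{gl3.15} for $|\nabla a_i(x_i)|$, the cross term $|\nabla a_i(x_i)|\cdot\int|x-x_i|v_i^2 \big/ \int v_i^2$ does not upgrade the principal order and that the $\|\nabla a_i\|_{C^{0,1}(B_2)}$ factors end up exactly in the claimed positions (linearly in the three non-critical cases, and paired with $\ln m_i$ in the borderline case $n=4s+1$).
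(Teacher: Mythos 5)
Your proposal follows essentially the same route as the paper: apply Identity 2 of Proposition \ref{gp3.7} to the decomposition \eqref{g3.28} together with Corollary \ref{gc3.11} to bound $\int_{B_1(x_i)}a_i v_i^2$, then pass to $a_i(x_i)\int_{B_1(x_i)}v_i^2$ by the Lipschitz/Taylor bound on $a_i$ and $\nabla a_i$, and conclude by dividing with the lower bound \eqref{g3.31} and inserting Lemma \ref{gl3.14} and Lemma \ref{gl3.15}. The case bookkeeping you flag is exactly what the paper leaves implicit, and your placement of the $|x-x_i|^2$ weight on the $\|\nabla a_i\|_{C^{0,1}(B_2)}$ term is consistent with (indeed slightly more careful than) the paper's \eqref{g3.32}--\eqref{g3.33}.
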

\begin{proof}
By Proposition \ref{gp3.7} Identity 2 and Corollary \ref{gc3.11}, we have that
\begin{equation}
\begin{aligned}
\label{g3.32}
\int_{B_1(x_i)}a_i(x)v_i^2 & \leq C m_i^{-2} + \tau_i  + C \int_{B_1(x_i)}|x-x_i||\nabla a_i(x)|v_i^2(x)dx\\
& \leq C m_i^{-2} +C\tau_i + (|\nabla a_i(x_i)|+\|\nabla a_i\|_{C^{0,1}(B_2)}) \int_{B_1(x_i)}|x-x_i|v_i^2dx.
\end{aligned}
\end{equation}
Moreover,
\begin{equation}
\label{g3.33}
a_i(x_i)\int_{B_1(x_i)}v_i(x)^2dx \leq C \int_{B_1(x_i)}a_i(x)v_i^2dx + C \|\nabla a_i\|_{C^{0,1}(B_2)} \int_{B_1(x_i)}|x-x_i|v_i^2dx.
\end{equation}
The lemma follows from (\ref{g3.33}), (\ref{g3.32}), (\ref{g3.31}), Lemma \ref{gl3.14} and Lemma \ref{gl3.15}.
\end{proof}

 {\it Proof of Proposition \ref{gp3.13}.} 
 By Proposition \ref{gp3.1}, $r^{2s/(p_i-1)}\overline{v}_i(r)$ has precisely one critical point in the interval $0 <r<r_i:=R_i v_i(x_i)^{-(p_i-1)/(2s)}$. If the proposition were wrong, let $\mu_i $ be the second critical point of $r^{2s/(p_i-1)}\overline{v}_i(r)$. Then we see that
 \begin{equation}
 \mu_i \geq r_i \text{ and } \lim_{i\rightarrow \infty} \mu_i = 0.
 \end{equation}

Without loss of generality, we assume that $x_i=0$. Set
\begin{equation*}
\phi_i(x)=\mu_i^{\frac{2s}{p_i-1}}v_i(\mu_i x), ~x\in \mathbb{R}^n.
\end{equation*} 
 Clearly, $\phi_i$ satisfies
 \begin{align*}
 \phi_i(x) & = \int_{\mathbb{R}^n}\frac{H_i^{\tau_i}(\mu_i y)\phi_i(y)^{p_i}+\mu_i^{2s}a_i(\mu_i y) \phi_i(y)}{|x-y|^{n-2s}}dy \text{ in } B_{3\mu_i^{-1}},\\
 |x|^{\frac{2s}{p_i-1}}\phi_i(x) & \leq A_3, \text{ for } |x|<2\mu_i^{-1}\rightarrow \infty,\\
 \lim_{i\rightarrow \infty} \phi_i(0)& = \infty,\\  
 r^{\frac{2s}{p_i-1}}\overline{\phi}_i(r)& \text{ has precisely one critical point in } 0<r<1,
 \end{align*}
 and
 \begin{equation*}
 \frac{d}{dr}\left\{ \left. r^{\frac{2s}{p_i-1}}\overline{\phi}_i(r) \right \}  \right|_{r=1} = 0, 
 \end{equation*}
 where $\overline{\phi}_i(r)$ is the average of $\phi_i$ over the sphere of radius $r$ with center at $0$. Therefore, $0$ is an isolated simple blow up point of $\{\phi_i\}_{i\in \mathbb{N}}$. 
 
 From Corollary \ref{gc3.12} and the proof of \cite[Proposition 2.21]{JLX}, we have that
 \begin{align*}
 \phi_i(0)\int_{B_1}\frac{H_i^{\tau_i}(\mu_i y)\phi_i(y)^{p_i} + \mu_i^{2s}a_i(\mu_i y)\phi_i(y)}{|x-y|^{n-2s}}dy  \rightarrow \frac{d_0}{|x|^{n-2s}} \text{ in } C^2_{loc}(B_1\backslash \{0\}),\\
 h_i(x)= \phi_i(0)\int_{\mathbb{R}^n\backslash B_1}\frac{H_i^{\tau_i}(\mu_i y)\phi_i(y)^{p_i} + \mu_i^{2s}a_i(\mu_i y)\phi_i(y)}{|x-y|^{n-2s}}dy  \rightarrow d_0>0 \text{ in } C^1_{loc}(B_1),
 \end{align*}
 and thus, 
 \begin{equation*}
 \phi_i(0)\phi_i(x) \rightarrow \frac{d_0}{|x|^{n-2s}}+a \text{ in } C^1_{loc}(B_1 \backslash \{0\}).
 \end{equation*}
 
In order to prove the proposition,  we are going to derive a contradiction to Pohozaev Identity 2 in Proposition \ref{gp3.7}. We need the following:\\
{ \bf Affirmation:} 
\begin{equation}
 \label{g3.35}
\lim_{i \rightarrow \infty} \phi_i(0)^2\int_{B_{\delta}}|x\nabla H_i^{\tau_i}(\mu_i x) \phi_i(x)^{p_i+1}|dx =0,
\end{equation} 
 and 
 \begin{equation}
 \label{g3.36}
\lim_{i \rightarrow \infty} \phi_i(0)^2\int_{B_{\delta}}\mu_i^{2s}|x\nabla a_i(\mu_i x) \phi_i(x)|dx =0.
\end{equation} 
{\it Proof of Affirmation.}
 Applying Lemma \ref{gl3.16} for $\phi_i$ and from the positive lower bound of $a_i$, we have that
  \begin{align*}
 \mu_i^{2s}\leq C \begin{cases}
 						(\ln \phi_i(0))^{-1}(1+\mu_i^{2s}\|\nabla a_i\|_{C^{0,1}(B_2)}) & \text{ if } n=4s,\\
  						\phi_i(0)^{-2+\frac{4s}{n-2s}}(1+\mu_i^{2s}\|\nabla a_i\|_{C^{0,1}(B_2)}) & \text{ if } 4s<n<4s+1,\\
  						\phi_i(0)^{-2+\frac{4s}{n-2s}}(1+\mu_i^{2s}\|\nabla a_i\|_{C^{0,1}(B_2)}\ln \phi_i(0)) & \text{ if } n=4s+1,\\
  						\phi_i(0)^{-2+\frac{4s}{n-2s}}+\phi_i(0)^{-\frac{4}{n-2s}}\mu_i^{2s}\|\nabla a_i\|_{C^{0,1}(B_2)} & \text{ if } n>4s+1.\\
  						\end{cases}
\end{align*}   
Since $\lim_{i\rightarrow \infty}\mu_i= 0$, one can see that the second term of the right hand side is less than $\mu_i^{2s}/2$ for $i$ large, and then
\begin{equation*}
\mu_i^{2s}\leq C\begin{cases}
									\mu_i^{2s} &\text{ if }  4s>n,\\
									(\ln \phi_i(0))^{-1} & \text{ if } 4s = n,\\
									\phi_i(0)^{-2+\frac{4s}{n-2s}} & \text{ if } 4s<n.
								\end{cases}
\end{equation*}
So, (\ref{g3.36}) follows applying Corollary \ref{gc3.11} in the following  inequality
\begin{align*}
 \phi_i(0)^2\int_{B_{\delta}}\mu_i^{2s}|x\nabla a_i(\mu_i x) \phi_i(x)|dx & \leq C\mu_i^{2s}\phi_i(0)^2 \int_{B_1}|x|\phi_i(x)^2dx.
\end{align*}

To prove (\ref{g3.35}), we apply Lemma \ref{gl3.14} for $\phi_i$, and thus we obtain
\begin{align*}
\tau_i & \leq C \phi_i(0)^{-2} + C\mu_i^{2s}\|\nabla a_i\|_{C^{0,1}(B_2)}\times\begin{cases}
  			\phi_i(0)^{-2} & \text{if } 4s+1> n,\\
  			\phi_i(0)^{-2}\ln \phi_i(0) & \text{if } 4s+1=n,\\
  			\phi_i(0)^{-\frac{4s+2}{n-2s}} & \text{if } 4s+1 < n,\\
  			\end{cases}\\
  			& \leq C  \phi_i(0)^{-2}.
\end{align*}
So, (\ref{g3.35}) follows from the above inequality and applying Corollary \ref{gc3.11} in the following inequality
\begin{equation*}
\phi_i(0)^2\int_{B_{\delta}}|x\nabla H_i^{\tau_i}(\mu_i x) \phi_i(x)^{p_i+1}|dx \leq \tau_i\phi_i(0)^2\int_{B_{\delta}}|x| \phi_i(x)^{p_i+1}dx.
\end{equation*}

This proves the affirmation. 
\hfill $\square$
 
 Now, we continue with the proof of the proposition.  Since
 \begin{equation*}
 -\frac{\tau_i(n-2s)}{2(p_i+1)}=\frac{n-2s}{2}-\frac{n}{p_i+1}\leq 0,
 \end{equation*}
 by Affirmation we have
 \begin{equation}
 \label{gie1}
 \begin{aligned}
\lim_{i \rightarrow \infty}\phi_i(0)^2 & \left(- \frac{\tau_i(n-2s)}{2(p_i+1)}\int_{B_{\delta}} H_i(\mu_i x)^{\tau_i} \phi_i^{p_i+1}dx -s\int_{B_{\delta}} \mu_i^{2s}a_i(\mu_i x) \phi_i^2dx \right. \\
 &\left. -\frac{1}{p_i+1}\int_{B_{\delta}}x\nabla H_i^{\tau_i}(\mu_i x) \phi_i^{p_i+1}dx - \frac{1}{2}\int_{B_{\delta}}\mu_i^{2s}x\nabla a_i(\mu_i x) \phi_idx \right) \leq 0.
\end{aligned}
 \end{equation}
 
 On the other hand, if we let
 \begin{equation*}
 b_i(x):=\int_{\mathbb{R}^n\backslash B_{\delta}}\frac{H_i^{\tau_i}(\mu_i y)\phi_i(y)^{p_i} + \mu_i^{2s}a_i(\mu_i y) \phi_i(y)}{|x-y|^{n-2s}}dy,
 \end{equation*}
 then, 
 \begin{align*}
 \phi_i(0)b_i(x) & \geq  \phi_i(0)\int_{\mathbb{R}^n\backslash B_1}\frac{H_i^{\tau_i}(\mu_i y)\phi_i(y)^{p_i} + \mu_i^{2s}a_i(\mu_i y)\phi_i(y)}{|x-y|^{n-2s}}dy\\
 							& \geq \phi_i(0)\int_{\mathbb{R}^n\backslash B_1}\frac{H_i^{\tau_i}(\mu_i y)\phi_i(y)^{p_i} + \mu_i^{2s}a_i(\mu_i y)\phi_i(y)}{|y|^{n-2s}}\frac{|y|^{n-2s}}{|x-y|^{n-2s}}dy\\
 							& \geq b_i(0)\frac{1}{(1+|x|)^{n-2s}}\\
 							& \geq b_i(0) 2^{2s-n} \rightarrow d_0 2^{2s-n},
 \end{align*}
for $x\in B_{\delta}$ provided that $\delta$ is small, and
\begin{equation*}
|\nabla b_i(x)|\leq \begin{cases}
								C_{\delta}\phi_i(0)^{-1} & \text{ if } |x|\leq\frac{7\delta}{8},\\	
								C_{\delta}(\delta-|x|)^{-\beta} \phi_i(0)^{-1} & \text{ if } \frac{7\delta}{8}<|x|<\delta,
								\end{cases}
\end{equation*}
 where $0<\beta<1$ and $C_{\delta}$ a positive constant depending on $\delta$.

 Hence, from Proposition \ref{gp3.1}, Lemma \ref{gl3.8} and Proposition \ref{gp3.10}, we have
\begin{equation*}
\begin{aligned}
\int_{B_{\delta}}[H_i(\mu_i x)^{\tau_i}\phi_i^{p_i} + \mu_i^{2s}a_i(\mu_i x)\phi_i] b_i(x)dx & \geq C^{-1} d_0 \phi_i(0)^{-1}\int_{B_{\delta}}\phi_i^{p_i}dx\\
& \geq C^{-1}d_0\phi_i(0)^{-2}\int_{B_1}(1+|z|^2)^{\frac{(2s-n)p_i}{2}}dz,
\end{aligned}
\end{equation*}
\begin{equation*}
\begin{aligned}
\left| \int_{B_{\delta}}x\nabla b_i H_i(\mu_i x)^{\tau_i} \phi_i^{p_i}dx\right| & \leq C\left(\int_{B_{7\delta/8}} + \int_{B_{\delta}\backslash B_{7\delta/8}}|x||\nabla b_i|\phi_i^{p_i} dx \right) \\
& \leq C o(1)\phi_i(0)^{-2},
\end{aligned}
\end{equation*}
\begin{equation*}
\int_{\partial B_{\delta}} H_i(\mu_i x)^{\tau_i}\phi_i(x)^{p_i+1}d\sigma \leq C \phi_i(0)^{-\frac{2n}{n-2s}},
\end{equation*}
\begin{equation*}
\begin{aligned}
\left| \int_{B_{\delta}}x\nabla b_i \mu_i^{2s} a(\mu_i x) \phi_i dx\right| & \leq C\mu_i^{2s}\left(\int_{B_{7\delta/8}} + \int_{B_{\delta}\backslash B_{7\delta/8}}|x||\nabla b_i|\phi_i dx \right) \\
& \leq C \mu_i^{2s}\phi_i(0)^{-2},
\end{aligned}
\end{equation*}
and
\begin{equation*}
\int_{\partial B_{\delta}} \mu_i^{2s}a_i(\mu_i x)\phi_i(x)^2d\sigma \leq C \mu_i^{2s}\phi_i(0)^{-2}.
\end{equation*}
 It follows from the above inequalities that
 \begin{align*}
 \lim_{i\rightarrow \infty}
 & \phi_i(0)^2 \left( \frac{n-2s}{2} \int_{B_{\delta}}[H_i(\mu_i)^{\tau_i}\phi_i^{p_i}+\mu_i^{2s}a_i(\mu_i x)\phi_i]b_idx \right.\\
&  + \int_{B_{\delta}}x[H_i(\mu_i)^{\tau_i}\phi_i^{p_i}+\mu_i^{2s}a_i(\mu_i x)\phi_i]\nabla b_idx \\
& \left .-\frac{\delta}{p_i+1}\int_{B_{\delta}} H_i(\mu_i x)^{\tau_i}\phi_i^{p_i+1}d\sigma  -\frac{\delta}{2}\int_{B_{\delta}} \mu_i^{2s}a_i(\mu_i x)\phi_i^2d\sigma \right)>0.
 \end{align*}
This  contradicts (\ref{gie1}) and Pohozaev Identity 2 in Proposition \ref{gp3.7}.
  \hfill $\square$
 
\subsection{Proof of Lemma \ref{gl8.1} and Theorem \ref{gt2.1}} \

Let $u\in C^2(\mathbb{S}^n)$ be a solution of
\begin{equation}
\label{gp8.1}
P_s u = u^{p} + k u, ~u>0 \text{ in } \mathbb{S}^n,
\end{equation}
where $k: \mathbb{S}^n \rightarrow \mathbb{R}$ is a positive smooth function.

\begin{proposition}
\label{gp3.17}
Assume as above. Then for any $0<\varepsilon<1$ and $R>1$, there exist large positive constants $C_1$, $C_2$ depending on $n$, $s$, $\|k\|_{C^2(\mathbb{S}^n)}$, $\varepsilon$ and $R$ such that, if 
\begin{equation*}
\max_{\mathbb{S}^n} u\geq C_1,
\end{equation*}
then $(n+2s)/(n-2s)-p<\varepsilon$, and there exists a finite set $\mathcal{T}(v)\subset \mathbb{S}^n$ such that
\begin{itemize}
\item[(i)] If $P\in \mathcal{T}(u)$, then it is a local maximum of $u$ and in the stereographic projection coordinate system $\{ y_1,...,y_n\}$ with $P$ as the south pole,
\begin{equation}
\label{t2.e1}
\|u^{-1}(P)u(u^{-\frac{p-1}{2s}}(P) y)- (1+|y|^2)^{\frac{2s-n}{2}}\|_{C^2(B_{2R})}<\varepsilon.
\end{equation}
\item[(ii)] If $P_1$, $P_2$ belonging to $\mathcal{T}(u)$, then 
\begin{equation*}
B_{Ru(P_1)^{-\frac{p-1}{2s}}}(P_1)\cap B_{Rv(P_2)^{-\frac{p-1}{2s}}}(P_2)=\emptyset.
\end{equation*} 
\item[(iii)] $u(P)\leq C_2\{dist(P,\mathcal{T}(u))^{-\frac{2s}{p-1}}\}$ for all $P\in \mathbb{S}^n$.
\end{itemize}
\end{proposition}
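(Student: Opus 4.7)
The plan is to produce $\mathcal{T}(u)$ by a Schoen-type iterative selection procedure and then to verify the three listed properties using the blow-up machinery developed earlier in this section. I would argue by contradiction: if the conclusion fails for some $\varepsilon \in (0,1)$ and $R > 1$, there is a sequence of solutions $(u_i, p_i)$ of \eqref{gp8.1} with $\max u_i \to \infty$ for which no pair $(C_1, C_2)$ produces the required finite set. In stereographic coordinates centered at a chosen point and using Lemma \ref{l2.1}, the equation $P_s u_i = u_i^{p_i} + k u_i$ becomes an integral equation for $v_i = \mathcal{P} u_i$ of the form (\ref{g3.26}), with $H_i \equiv 1$ and $a_i(y) = k(\mathcal{F}(y))$ satisfying (\ref{g3.2}) and (\ref{g3.27}).

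The selection is defined iteratively. Set $P_{i,1}$ to be a maximum of $u_i$. Given $P_{i,1}, \ldots, P_{i,j}$, define
\begin{equation*}
d_{i,j}(Q) = \min_{1 \le k \le j} \mathrm{dist}(Q, P_{i,k}), \qquad \Psi_{i,j}(Q) = u_i(Q)\, d_{i,j}(Q)^{2s/(p_i - 1)}.
\end{equation*}
Stop if $\max_Q \Psi_{i,j}(Q) \le C_2$ and set $\mathcal{T}(u_i) = \{P_{i,1}, \ldots, P_{i,j}\}$; otherwise pick $P_{i,j+1}$ as a maximizer of $\Psi_{i,j}$. This termination is genuine for each fixed $u_i$: since $u_i$ is smooth on the compact sphere, $\|u_i\|_\infty < \infty$, and the stopping criterion $\Psi_{i,j}(P_{i,j+1}) > C_2$ forces $d_{i,j}(P_{i,j+1}) > (C_2/\|u_i\|_\infty)^{(p_i-1)/(2s)}$, so the selected points are uniformly separated on the compact sphere and hence form a finite set.

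The three properties are then verified as follows. Once $C_1$ is taken large enough, the selection places each $P_{i,j}$ in a position where its stereographic representative $v_i$ satisfies a local bound $v_i(x) \le A_3 |x|^{-2s/(p_i-1)}$ near the origin, with $A_3$ depending only on $R$ and the stopping threshold, so $P_{i,j}$ is an isolated blow-up point in the sense of Definition \ref{gd3.1}; Proposition \ref{gp3.13} upgrades this to an isolated simple blow-up. Proposition \ref{gp3.1} then supplies the rescaled $C^2$-convergence to $(1+|y|^2)^{(2s-n)/2}$ and gives (i), while Lemma \ref{gl3.8} applied at $P_{i,1}$ yields $\tau_i = (n+2s)/(n-2s) - p_i = o(1)$, establishing the exponent-closeness statement. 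Property (iii) is exactly the stopping condition. For property (ii), maximality of $\Psi_{i,j}$ at $P_{i,j+1}$ together with $\Psi_{i,j}(P_{i,j+1}) > C_2$ gives
\begin{equation*}
\mathrm{dist}(P_{i,j+1}, P_{i,k}) \ge d_{i,j}(P_{i,j+1}) > C_2^{(p_i-1)/(2s)} u_i(P_{i,j+1})^{-(p_i-1)/(2s)}
\end{equation*}
for every $k \le j$; choosing $C_2$ large relative to $R$ and using the comparison $u_i(P_{i,k}) \gtrsim u_i(P_{i,j+1})$ that follows from the iterative maximization of $\Psi_{i,\cdot}$ yields the required disjointness of the balls $B_{R u_i(P_{i,j})^{-(p_i-1)/(2s)}}(P_{i,j})$.

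The main obstacle is to close the contradiction with the original failure assumption in a way that is uniform along the sequence. Once the selection has stopped, Proposition \ref{gp3.10} controls each bubble by $u_i(x) \lesssim u_i(P_{i,j})^{-1}|x - P_{i,j}|^{2s-n}$ near each $P_{i,j}$, and combined with the stopping inequality this pins $u_i$ down globally in terms of a standard bubble configuration. Verifying that this forces (i)-(iii) and the exponent bound for large $C_1, C_2$ uniformly in $i$ is the technical heart of the argument, and in particular one must ensure that the inductive comparison of the values $u_i(P_{i,k})$ used to deduce (ii) really persists through the selection when consecutive selected maxima are comparable but unequal. This last point, and the extraction of the constants $C_1$, $C_2$ depending only on $n$, $s$, $\|k\|_{C^2(\mathbb{S}^n)}$, $\varepsilon$ and $R$ but not on $u$, is where I expect the bookkeeping to be the most delicate.
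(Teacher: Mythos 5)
The paper itself does not write this proof out: it points to the rescaling argument in the proof of Proposition \ref{gp3.1} together with the Liouville-type classification theorem of \cite{JLX1} (cf.\ \eqref{fractional} and \cite{CLO1, XY3}), and your Schoen-type selection is indeed the standard skeleton for such a statement. However, your verification of the listed properties has a genuine gap: you never invoke the Liouville theorem, and instead lean on Proposition \ref{gp3.1}, Proposition \ref{gp3.13}, Proposition \ref{gp3.10} and Lemma \ref{gl3.8}, whose hypotheses are not available at this stage. All of those results presuppose an isolated (or isolated simple) blow-up point, i.e.\ a bound $v_i(x)\le A_3|x-x_i|^{-2s/(p_i-1)}$ on a ball of \emph{fixed} radius with a uniform constant $A_3$; your selection only yields $u_i(Q)\,d_{i,j}(Q)^{2s/(p_i-1)}\le C_2$ after the process stops, which controls $u_i$ by the distance to the whole selected set and does not prevent selected points from accumulating as $i\to\infty$. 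Ruling out accumulation at a definite scale is precisely the content of Proposition \ref{gp3.18}, which comes after, and is proved using, Proposition \ref{gp3.17}, so you cannot import that structure here. The correct route to (i) and to the exponent statement is self-contained: at a maximizing point $P$ of the weighted function, rescale by $u_i(P)^{-(p_i-1)/2s}$; maximality gives uniform bounds of the rescaled functions on balls whose radii tend to infinity, local regularity estimates give $C^2_{loc}$ convergence to an entire positive solution of $(-\Delta)^s w = w^{p_\infty}$ with $w(0)=1$ a maximum, and the Liouville theorem forces $p_\infty=(n+2s)/(n-2s)$ (this, not Lemma \ref{gl3.8}, is what yields $(n+2s)/(n-2s)-p<\varepsilon$) and identifies the limit with the standard bubble, giving \eqref{t2.e1}.

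The deduction of (ii) is also unsubstantiated. Maximality of $\Psi_{i,j}$ at $P_{i,j+1}$ only compares the \emph{weighted} quantities $u_i(\cdot)\,d_{i,j}(\cdot)^{2s/(p_i-1)}$; no comparison $u_i(P_{i,k})\gtrsim u_i(P_{i,j+1})$ between the plain values at selected points follows from it, and consecutive selected maxima need not be comparable. The standard way to obtain (ii) is either to build it into the selection (choose each new point outside the union of the balls $B_{Ru(P_k)^{-(p-1)/2s}}(P_k)$ already produced) or to deduce it from (i): if two such balls intersected, then in the rescaled coordinates centred at one of the points the other would appear inside $B_{2R}$ as a second interior local maximum with value bounded below, which is incompatible with $\varepsilon$-closeness in $C^2$ to $(1+|y|^2)^{(2s-n)/2}$, whose only critical point in $B_{2R}$ is the origin. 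So the proposal has the right selection scheme and the finiteness argument for each fixed $u$ is fine, but the heart of the proof --- the rescaling-plus-Liouville step and the disjointness argument --- is either missing or rests on results whose hypotheses you cannot verify at this point of the paper.
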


The proof is standard by now, which follows from the blow-up argument as the proof of Proposition \ref{gp3.1} and Liouville theorem in Jin, Li and Xiong \cite{JLX1}. We omit it here.

\begin{proposition}
\label{gp3.18}
For any  $0<\varepsilon<1$, $R>1$ and any solution $u\in C^2(\mathbb{S}^n)$ of \eqref{gp8.1} with $\max_{\mathbb{S}^n}u>C_1$, we have
\begin{equation*}
dist(P_1,P_2)\geq d_0, \text{ for any }P_1,~P_2 \in \mathcal{P}(u) \text{ and } P_1\neq P_2, 
\end{equation*}
where $d_0$ depends only on $n$, $s$, $\varepsilon$, $R$, $\inf_{\mathbb{S}^n} k$ and an upper bound of $\|k\|_{C^{2}(\mathbb{S}^n)}$.
\end{proposition}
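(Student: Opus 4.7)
The plan is a proof by contradiction combined with a blow-up rescaling, after which the machinery of Proposition \ref{gp3.13} and Corollary \ref{gc3.12} yields a contradiction via the Pohozaev identity. Suppose the conclusion fails: there exist $\varepsilon,R$ fixed and a sequence of solutions $u_i$ of \eqref{gp8.1} satisfying $\max_{\mathbb{S}^n}u_i\geq C_1$, with blow-up points $P_1^i,P_2^i\in\mathcal{T}(u_i)$, $P_1^i\neq P_2^i$, such that $\sigma_i:=\mathrm{dist}_{g_{\mathbb{S}^n}}(P_1^i,P_2^i)\to 0$. Using Lemma \ref{l2.1} and the stereographic projection placing $P_1^i$ at the origin, transfer each $u_i$ to a solution $v_i\in D^{s,2}(\mathbb{R}^n)$ of an integral equation of type \eqref{g3.26} whose coefficients $H_i,a_i$ satisfy \eqref{g3.27}--\eqref{g3.2} uniformly in $i$; in these coordinates $P_1^i$ corresponds to $0$ and $P_2^i$ to some $y_2^i$ with $|y_2^i|=\sigma_i(1+o(1))$.

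I then perform the natural rescaling $\tilde v_i(y):=\sigma_i^{2s/(p_i-1)}v_i(\sigma_i y)$. This $\tilde v_i$ still satisfies an equation of type \eqref{g3.26} with $\tilde H_i(y):=H_i(\sigma_i y)$ and $\tilde a_i(y):=\sigma_i^{2s}a_i(\sigma_i y)$, and has two candidate blow-up points: $\tilde P_1:=0$ and $\tilde P_2^i:=\sigma_i^{-1}y_2^i$. Along a subsequence, $\tilde P_2^i\to\tilde P_2$ with $|\tilde P_2|=1$. Properties (ii)--(iii) of Proposition \ref{gp3.17}, rescaled, ensure both $\tilde P_1$ and $\tilde P_2$ are isolated blow-up points of $\{\tilde v_i\}$ (property (ii) guarantees that the heights $\tilde v_i(\tilde P_j^i)$ tend to $\infty$, property (iii) provides the pointwise upper bound), and Proposition \ref{gp3.13} upgrades them to isolated simple blow-up points. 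Swapping $P_1^i$ and $P_2^i$ if necessary, I may assume $m_i:=\tilde v_i(0)\geq N_i:=\tilde v_i(\tilde P_2^i)$.

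Fix $\delta\in(0,1/4)$ small enough that $\tilde P_2^i\notin B_{1/2}$ for all large $i$. Applying Corollary \ref{gc3.12} at $\tilde P_1=0$ (stated for the ball of radius $1$ but valid for any fixed smaller radius by an identical argument) gives
\[
m_i\,\tilde v_i(x)\longrightarrow d_0|x|^{2s-n}+h(x)\qquad\text{in }C^1_{loc}(B_{1/2}\setminus\{0\}),
\]
where $h\in C^1(B_{1/2})$ is the limit of the exterior Riesz potential $m_i\int_{\mathbb{R}^n\setminus B_{1/2}}\frac{\tilde v_i^{p_i}+\tilde a_i\tilde v_i}{|x-y|^{n-2s}}\,dy$. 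The crucial extra observation, exploiting the second blow-up, is that $h(0)>0$: a change of variables around $\tilde P_2^i$ together with Proposition \ref{gp3.1} yields $\int_{B_\rho(\tilde P_2^i)}\tilde v_i^{p_i}\,dy\sim N_i^{-1}$ for any fixed small $\rho$, so the mass of the second bubble contributes to $h(0)$ an amount of order $m_i/N_i\geq 1$, and the positive $\tilde a_i\tilde v_i$ term only strengthens this.

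Finally, I apply Pohozaev Identity 2 of Proposition \ref{gp3.7} to $\tilde v_i$ on $B_\delta$, multiply through by $m_i^2$, and let $i\to\infty$. Exactly as in the derivation of \eqref{gie1} in the proof of Proposition \ref{gp3.13}, the interior left-hand side is nonpositive in the limit, using $\tau_i=O(m_i^{-2})$ from Lemma \ref{gl3.14}, the decay of $|\nabla\tilde a_i(0)|$ and $\tilde a_i(0)$ from Lemmas \ref{gl3.15}--\ref{gl3.16}, and the integral bounds of Corollary \ref{gc3.11}; whereas the boundary terms on $\partial B_\delta$ and the gradient-of-exterior-Riesz-potential term on the right-hand side converge, by the $C^1$ convergence of the previous paragraph, to a strictly positive quantity proportional to $h(0)$. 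This is the desired contradiction. The main obstacle is producing the strict lower bound $h(0)\geq c>0$: one must first arrange $m_i\geq N_i$ by the symmetric choice of blow-up and then extract a quantitative lower estimate on the second bubble's Riesz-potential contribution from the isolated-simple-blow-up structure supplied by Proposition \ref{gp3.13}.
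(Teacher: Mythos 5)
Your overall strategy -- contradiction, rescaling by the mutual distance $\sigma_i$, upgrading both points to isolated simple blow-up points via Proposition \ref{gp3.13}, extracting $h(0)>0$ from the second bubble, and contradicting Pohozaev Identity 2 -- is exactly the argument the paper intends (its proof is a citation of \cite[Proposition 3.2]{JLX} together with Proposition \ref{gp3.13}), and your $h(0)>0$ computation and final Pohozaev step are essentially right. However, there is a genuine gap where you claim that properties (ii)--(iii) of Proposition \ref{gp3.17} make $0$ and $\tilde P_2$ isolated blow-up points of $\{\tilde v_i\}$. Property (ii) does \emph{not} give $\tilde v_i(\tilde P_j^i)\to\infty$: disjointness of the two balls only yields $\sigma_i \ge R\,u_i(P_j^i)^{-(p_i-1)/2s}$, hence $\tilde v_i(\tilde P_j^i)=\sigma_i^{2s/(p_i-1)}u_i(P_j^i)\ge R^{2s/(p_i-1)}$, which for fixed $R$ is merely a constant of order $R^{(n-2s)/2}$. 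Definition \ref{gd3.1} requires the rescaled heights to diverge, and everything downstream (Proposition \ref{gp3.13}, Corollary \ref{gc3.12}, Lemmas \ref{gl3.14}--\ref{gl3.16}, multiplying the Pohozaev identity by $m_i^2$) relies on this. Ruling out the scenario in which the two bubbles sit at distance comparable to their widths (so the rescaled heights stay bounded) needs an extra argument: if $\tilde v_i$ were locally bounded, then, since $\sigma_i^{2s}a_i(\sigma_i\cdot)\to0$ and $\tau_i\to0$, it would converge to a positive entire solution of $(-\Delta)^s w=w^{(n+2s)/(n-2s)}$ with critical points at both $0$ and $\tilde P_2$, contradicting the classification \eqref{fractional} of \cite{CLO1}, whose solutions have a single critical point. (Also, $\tau_i\to0$ along the contradiction sequence is itself not automatic for fixed $\varepsilon$; it follows by reapplying Proposition \ref{gp3.17} with better parameters, using that $u_i(P_j^i)\to\infty$.)

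A second, related omission: property (iii) controls $u$ only by the distance to the whole set $\mathcal{T}(u_i)$, so after rescaling it gives the isolated-blow-up bound $\tilde v_i(y)\le C\min(|y|,|y-\tilde P_2^i|)^{-2s/(p_i-1)}$ on a fixed ball only if no other point of $\mathcal{T}(u_i)$ lies within $o(\sigma_i)$ of $P_1^i$ or $P_2^i$. You must therefore choose $P_1^i,P_2^i$ realizing (up to a constant) the minimal distance between points of $\mathcal{T}(u_i)$ -- permissible, since if the proposition fails that minimal distance also tends to zero -- but your proposal never makes this selection, and without it the isolated-blow-up property at $0$ can fail. Finally, a small imprecision in the last step: the strictly positive term of order $m_i^{-2}$ in the contradiction is the interior term $\tfrac{n-2s}{2}\int_{B_\delta}[H\tilde v_i^{p_i}+\tilde a_i\tilde v_i]\,b_i\,dx$ coming from the exterior Riesz potential $b_i$ on the right-hand side of Identity 2 (as in the display following \eqref{gie1}), not the boundary terms or the $x\nabla b_i$ term, which are lower order.
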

\begin{proof}
See the proofs of \cite[Proposition 3.2]{JLX} and Proposition \ref{gp3.13}.
\end{proof} 

 {\it Proof of Lemma \ref{gl8.1}.} We first prove that $\|u\|_{L^{\infty}(\mathbb{S}^n)}<C$. Suppose the contrary, i.e., there exist sequences $\{\lambda_i\}_{i\in \mathbb{N}}$ and $\{u_i\}_{i\in\mathbb{N}}$ such that $\lambda_i \rightarrow 0^+$, $u_i$ is solution of (\ref{pc4.1}) with $\lambda=\lambda_i$ and $\|u_i\|_{L^{\infty}(\mathbb{S}^n)}\rightarrow +\infty$ as $i \rightarrow +\infty$. Denote $\tilde{u}=\varphi u$. Then, by (\ref{r1.1}), and after passing to a subsequence, $\tilde{u}_i$ solves (\ref{gp8.1}) with $k_i = R^g_s - \lambda_i>0$ for all $i\in \mathbb{N}$. Moreover,  there exists a sequence $\{\zeta_i\}_{i\in\mathbb{N}}$ in $\mathbb{S}^n$ such that, unless of subsequence,
 \begin{equation*}
\lim_{i \rightarrow \infty} \zeta_i =\zeta_0 \text{  for some } \zeta_0 \in \mathbb{S}^n, 
 \end{equation*}
 and 
 \begin{equation*}
 \lim_{i\rightarrow \infty} \tilde{u}_i(\zeta_i)= \lim_{i\rightarrow \infty}\|\tilde{u}_i\|_{L^{\infty}(\mathbb{S}^n)} = \infty.
 \end{equation*}
 
 For any fixed $\varepsilon >0$ sufficiently small and $R>>1$, by Proposition \ref{gp3.18} we have that $Card[ \mathcal{T}(\tilde{u}_i)]$ is uniformly bounded ($Card$ denotes the cardinality). By Proposition \ref{gp3.17} (iii), we can take a sequence of elements $P_i\in \mathcal{T}(\tilde{u}_i)$ such that, after passing a subsequence, $\zeta_i = P_i$ or $\lim_{i\rightarrow \infty} P_i=\zeta_0$, and thus, $\lim_{i \rightarrow \infty}\tilde{u}_i(P_i)=\infty$. Using the stereographic projection $\mathcal{F}^{-1}$ with $P_i$ being the south pole, the equation (\ref{gp8.1}) is transformed  into 
 \begin{equation}
 \label{g3.40}
 v_i(y)=\int_{\mathbb{R}^n}\frac{\xi_s(y)^{\tau_i}v_i(y)^{p_i} + a_i(y)v_i(y)}{|x-y|^{n-2s}}dy \text{ for all } x\in \mathbb{R}^n,
 \end{equation}
 where
 \begin{equation*}
 v_i(y)=\xi_s(y)\tilde{u}_i(\mathcal{F}(y)) \text{ and } a_i(y)= \xi_s^{\frac{4s}{n-2s}}k_i(\mathcal{F}(y)), ~y\in\mathbb{R}^n.
 \end{equation*}
 It follows that $0$ is an isolated blow up point of $\{v_i\}_{i\in\mathbb{N}}$, and from Proposition \ref{gp3.13}, $0$ an isolated simple blow up point.

As in the proof of Proposition \ref{gp3.13}, we will derive a contradiction to the Pohozaev identities in Proposition \ref{gp3.7}. We first consider the case $s\leq n/4$. By Lemma \ref{gl3.14} and Corollary \ref{gc3.11} we have
\begin{equation*}
\begin{aligned}
 &\left|\int_{B_{\delta}}x\nabla \xi_s^{\tau_i} v_i^{p_i+1}dx + \int_{B_{\delta}}x\nabla a_i v_i^2dx \right| \\
&  \leq C  \begin{cases}
v_i(0)^{-2} & \text{ if } 4s\leq n< 4s+1\\
v_i(0)^{-2}+ v_i(0)^{-2}\ln v_i(0) & \text{ if } n=4s+1\\
v_i(0)^{-2}+ v_i(0)^{-\frac{4s+2}{n-2s}} & \text{ if } n>4s+1,
\end{cases}
\end{aligned}
\end{equation*}
where $0<\delta<1$. It follows from (\ref{g3.31}) and the above inequality that 
\begin{align}
\notag
\lim_{i\rightarrow \infty}& v_i(0)^2  \left( \frac{\tau_i(n-2s)}{2(p_i+1)}\int_{B_{\delta}}\xi_s^{\tau_i}v_i^{p_i+1}dx + s\int_{B_\delta}a_i(x)v_i^2dx \right.\\
\notag
& \left. \text{ }~+\frac{1}{2}\int_{B_{\delta}}x\nabla a_i v_i^2dx +\frac{1}{p_i+1}x\nabla \xi_s^{\tau_i} v_i^{p_i}dx \right) \\
\label{g3.39}
 & \geq \lim_{i\rightarrow \infty}\left( C v_i(0)^2 \int_{B_\delta}v_i^2dx - v_i(0)^2\left|\int_{B_{\delta}}x\nabla \xi_s^{\tau_i} v_i^{p_i+1}dx + \int_{B_{\delta}}x\nabla a_i v_i^2dx\right|\right) \\
 \notag
 &=\infty.
\end{align}
On the other hand, if we let
\begin{equation}
\label{ge3.85}
h_i(x)= \int_{\mathbb{R}^n\backslash B_{\delta}}\frac{\xi_s(y)^{\tau_i}v_i(y)^{p_i}+a_i(y)v_i(y)}{|x-y|^{n-2s}}dy, ~x\in \mathbb{R}^n,
\end{equation}
then
\begin{equation*}
\nabla h_i(x)\leq \begin{cases}
								C_{\delta}v_i(0)^{-1} & \text{ if } |x|\leq\frac{7\delta}{8},\\	
								C_{\delta}(\delta-|x|)^{-\beta} v_i(0)^{-1} & \text{ if } \frac{7\delta}{8}<|x|<\delta.
								\end{cases}
\end{equation*}
By Proposition \ref{gp3.1}, Lemma \ref{gl3.8} and Proposition \ref{gp3.10}, we have
\begin{align*}
v_i(0)^2\left(-\frac{n-2s}{2}\int_{B_{\delta}}(\xi_s^{\tau_i}v_i^{p_i}+a_iv_i)h_idx -\int_{B_{\delta}}x(\xi_s^{\tau_i}v_i^{p_i}+a_iv_i)\nabla h_i~ dx \right. &\\
\left.+\frac{\delta}{p_i+1}\int_{\partial B_{\delta}}\xi_s^{\tau_i}v_i^{p_i+1}d\sigma + \frac{\delta}{2}\int_{\partial B_{\delta}}a_iv_i^2d\sigma \right)& \leq C_{\delta}.
\end{align*}
This contradicts (\ref{g3.39}) and Pohozaev Identity 2 in Proposition \ref{gp3.7}.

Now we use Pohozaev Identity 1 in Proposition \ref{gp3.7} for the case $n<4s$. Note that  by Proposition \ref{gp3.2}, Lemma \ref{gl3.8} and change of variables
\begin{equation*}
\label{ge3.80}
\begin{aligned}
\int_{B_{\delta}(x_i)}|x|^2v_i(x)^{p_i+1}dx & \geq C^{-1} m_i^{p_i+1 -\frac{n(p_i-1)}{2s}}\int_{B_{m_i^{\frac{p_i-1}{2s}}}}|y|^2(1+|y|^2)^{2s-n}dy\\
& \geq C^{-1},
\end{aligned}
\end{equation*}
for some positive constant $C$.
Then
\begin{equation}
\label{ge3.81}
\begin{aligned}
\lim_{i\rightarrow \infty} & v_i(0)^2\left[ \left( \frac{n+2}{p_i+1} -\frac{n-2s}{2} \right) \int_{B_{\delta}}|x|^2\xi_s^{\tau_i}v_i^{p_i+1}dx +(s+1)\int_{B_{\delta}}|x|^2a_i v_i^2dx\right]\\
&  \geq C \lim_{i\rightarrow \infty}v_i(0)^2\int_{B_{\delta}(x_i)}|x|^2v_i^{p_i+1}dx =\infty.
\end{aligned}
\end{equation}
On the other hand, by Proposition \ref{gp3.1}, Lemma \ref{gl3.8}, Proposition \ref{gp3.10} and Corollary \ref{gc3.11}, we have
\begin{equation}
\label{ge3.82}
\begin{aligned}
v_i(0)^2 &\left(- \frac{1}{p_i+1}\int_{B_{\delta}}|x|^2x\nabla \xi_s^{\tau_i} v_i^{p_i+1}dx -\frac{1}{2}\int_{B_{\delta}}|x|^2x \nabla a_i v_i^2 dx \right.\\
& ~\text{ }-\frac{n-2s}{2}\int_{B_{\delta}}|x|^2(\xi_s^{\tau_i}v_i^{p_i}+a_iv_i)h_idx -\int_{B_{\delta}}|x|^2 x(\xi_s^{\tau_i}v_i^{p_i}+a_iv_i)\nabla h_i~ dx\\
& ~\text{ }\left. +\frac{\delta^3}{p_i+1}\int_{\partial B_{\delta}}\xi_s^{\tau_i}v_i^{p_i+1}d\sigma + \frac{\delta^3}{2}\int_{\partial B_{\delta}}a_iv_i^2d\sigma \right) \leq C
\end{aligned}
\end{equation}
and 
\begin{equation}
\label{ge3.83}
\begin{aligned}
& \frac{n-2s}{4}v_i(0)^2\int_{B_{\delta}}\int_{B_{\delta}}\frac{(|x|^2-|y|^2)^2}{|x-y|^{n-2s+2}}(\xi_s^{\tau_i}v^{p_i}+a_iv_i)(x)(\xi_s^{\tau_i}v^{p_i}+a_iv_i)(y)dxdy\\
& \leq Cv_i(0)^2 \int_{B_{\delta}}\int_{B_{\delta}}\frac{(|x|^2+|y|^2)}{|x-y|^{n-2s}}(\xi_s^{\tau_i}v^{p_i}+a_iv_i)(x)(\xi_s^{\tau_i}v^{p_i}+a_iv_i)(y)dxdy\\
& \leq  Cv_i(0)^2\int_{B_{\delta}}|x|^2(\xi_s^{\tau_i}v^{p_i}+a_iv_i)(x) \int_{B_{\delta}}\frac{\xi_s^{\tau_i}v^{p_i}+a_iv_i)(y)}{|x-y|^{n-2s}}dydx\\
& \leq  Cv_i(0)^2\int_{B_{\delta}}|x|^2(\xi_s^{\tau_i}v^{p_i}+a_iv_i)(x)(v_i(x)-h_i(x))dx \leq C,
\end{aligned}
\end{equation}
where $h_i$ is defined by (\ref{ge3.85}). So, (\ref{ge3.82}) and (\ref{ge3.83}) contradict (\ref{ge3.81}) and Pohozaev Identity 1 in Proposition \ref{gp3.7}.

Therefore, $\|u_i\|_{L^{\infty}(\mathbb{S}^n)}<C$.

Now let $\zeta$ be an arbitrary point on $\mathbb{S}^n$. Using the  stereographic projection $\mathcal{F}^{-1}$ with $\zeta$ being the south pole, $v(y)=\xi_s(y)\varphi(\mathcal{F}(y)) u\mathcal{F}(y)$, $y\in \mathbb{R}^n$, satisfies (\ref{pc4.3}). The theorem then follows from interior estimates of solutions of linear equations in \cite{ABR, JLX} to $v$ and the compactness of $\mathbb{S}^n$.
\hfill $\square$

 {\it Proof of Theorem \ref{gt2.1} (critical).}
 By (\ref{r1.1}) and Lemma \ref{gl8.1}, we have that there exist some positive constants $\tilde{C}$ and $\tilde{\lambda}$ such that $\tilde{\lambda}<\min_{\mathbb{S}^n} R^g_s$ and for $0<\lambda<\tilde{\lambda}$, any solution $u$ of (\ref{pc4.1}), satisfies
 \begin{equation}
 \label{g3.41}
 \|u\|_{C^2(\mathbb{S}^n)}\leq \tilde{C}.
 \end{equation}
 Integrating equation (\ref{pc4.1}) on $\mathbb{S}^n$ leads to, using H\"{o}lder inequality,
 \begin{equation}
  \label{g3.42}
 \|u\|_{L^{p}(\mathbb{S}^n)}\leq C\lambda^{\frac{1}{p-1}},
 \end{equation}
 where $p=(n+2s)/(n-2s)$ and $C$ depends on $n$, $s$ and $\mathbb{S}^n$.
 As in the last part of the proof of Lemma \ref{gl8.1}, we use the stereographic projection $\mathcal{F}^{-1}$, the compactness of $\mathbb{S}^n$, \cite[Theorem 2.5]{JLX}, (\ref{g3.41}) and (\ref{g3.42}) to obtain
 \begin{equation}
 \label{g3.43}
 \|u\|_{L^{\infty}(\mathbb{S}^n)}\leq C\lambda^{\frac{1}{p-1}},
 \end{equation}
 where $C$ is a positive constant that depends on $n$, $s$, $\tilde{C}$ and $\mathbb{S}^n$. 
 The theorem follows from (\ref{g3.43}) and \cite[Lemma 4.1]{ABR} for $\lambda$ small.
\hfill $\square$

\section{Existence of infinitely many non-constant solutions}\label{gs5}

The solutions of the problem \eqref{g1.1} are related with the critical point of the functional  $J: H^s(\mathbb{S},g) \rightarrow \mathbb{R}$ defined by 
\begin{equation}
\label{ge4.1}
J(u)=\frac{1}{2}\int_{\mathbb{S}^n}u L^g_su d\upsilon_g - \int_{\mathbb{S}^n}F(x,u)d\upsilon_g, ~ u\in H^s(\mathbb{S},g),
\end{equation}
where $F(\zeta,t):=\int_{0}^{t}f(\zeta,s)ds$ for each $\zeta\in \mathbb{S}^n$ and any $t\in\mathbb{R}$.  By conditions on $f$, the functional $J$ is well defined and differentiable in $H^s(\mathbb{S}^n,g)$, but it fails to satisfy the Palais-Smale compactness condition in $H^s(\mathbb{S}^n,g)$. However, applying the fountain theorem \cite{BW}, and the principle of symmetric criticality \cite{P1} we obtain the following.
\begin{lemma}
\label{gl4.1}
 Let $G$ be a group acting isometrically on $H^s(\mathbb{S}^n, g)$ such that
 \begin{enumerate}
 \item[(i)] $J$ is $G$-invariant;
\item[(ii)] the embedding $X_G\hookrightarrow L^p(\mathbb{S},g)$ is compact;
\item[(iii)] $X_G$ has infinite dimension.
 \end{enumerate}
Then $J$ has a unbounded sequence of critical points $\{v_{l}\}_{l\in \mathbb{N}}$ in $H^s(\mathbb{S}^n, g)$.
\end{lemma}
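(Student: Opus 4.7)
The plan is to apply the Fountain Theorem of Bartsch--Willem to the restriction $J\big|_{X_G}$ and then invoke the Principle of Symmetric Criticality of Palais to lift the critical points so produced to critical points of $J$ on the full space $H^s(\mathbb{S}^n,g)$. By hypothesis $(i)$ and assumption $(F_1)$, the functional $J$ is invariant under the product action of $G$ and the $\mathbb{Z}/2$-action $u\mapsto -u$; the symmetric criticality principle then guarantees that every critical point of $J\big|_{X_G}$ is a critical point of $J$ on $H^s(\mathbb{S}^n,g)$. Hence it suffices to produce an unbounded sequence $\{v_l\}$ of critical points of the restricted, even functional $J\big|_{X_G}$.

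Since $X_G$ is separable and infinite-dimensional by $(iii)$, I would fix a Schauder-type basis $\{e_k\}_{k\in\mathbb{N}}$ of $X_G$ and set
\begin{equation*}
Y_k := \bigoplus_{j=1}^{k}\mathbb{R}\,e_j, \qquad Z_k := \overline{\bigoplus_{j\ge k+1}\mathbb{R}\,e_j},
\end{equation*}
and then verify the three hypotheses of the Fountain Theorem: $(A_1)$ there exist $\rho_k>r_k>0$ with $\max_{u\in Y_k,\,\|u\|_{s,g}=\rho_k}J(u)\le 0$; $(A_2)$ $b_k:=\inf_{u\in Z_k,\,\|u\|_{s,g}=r_k}J(u)\to+\infty$ as $k\to\infty$; and $(A_3)$ the Palais--Smale condition holds for $J\big|_{X_G}$.

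Condition $(A_1)$ follows from the Ambrosetti--Rabinowitz assumption $(F_3)$: standard integration yields $F(\zeta,t)\ge C_1|t|^\mu - C_2$ with $\mu>2$, and since all norms on the finite-dimensional space $Y_k$ are equivalent, this superlinear term dominates the quadratic $\tfrac{1}{2}\|u\|_{s,g}^2$ as $\|u\|_{s,g}\to\infty$. For $(A_2)$, set $\beta_k:=\sup_{u\in Z_k,\,\|u\|_{s,g}=1}\|u\|_{L^{p+1}(\mathbb{S}^n,g)}$; the compact embedding in $(ii)$ forces $\beta_k\to 0$ via the standard argument (any weak limit of extremizers lies in $\bigcap_k Z_k=\{0\}$, and compactness promotes weak convergence to strong). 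Combined with the growth estimate $|F(\zeta,t)|\le C(|t|+|t|^{p+1})$ coming from $(F_2)$, one obtains $J(u)\ge \tfrac{1}{2}\|u\|_{s,g}^2 - C\beta_k^{p+1}\|u\|_{s,g}^{p+1} - C\|u\|_{s,g}$ on $Z_k$, and the choice $r_k\sim\beta_k^{-(p+1)/(p-1)}\to\infty$ makes $b_k\to+\infty$.

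The main obstacle is verifying $(A_3)$, and this is exactly where hypothesis $(ii)$ is indispensable: on the full space $H^s(\mathbb{S}^n,g)$ the functional $J$ fails the Palais--Smale condition in the critical regime because the Sobolev embedding there is not compact. Given a sequence $\{u_n\}\subset X_G$ with $J(u_n)\to c$ and $J'(u_n)\to 0$, the combination $J(u_n)-\tfrac{1}{\mu}\langle J'(u_n),u_n\rangle$ together with $(F_3)$ yields boundedness $\|u_n\|_{s,g}\le C$; the compact embedding then upgrades the weak limit $u_n\rightharpoonup u$ to strong convergence in the relevant Lebesgue space, and the identity $\langle J'(u_n)-J'(u),u_n-u\rangle\to 0$ finally forces $u_n\to u$ strongly in $X_G$. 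The Fountain Theorem therefore produces critical values $c_l\to+\infty$; applying $(F_3)$ once more to the corresponding critical points $v_l$ gives $\|v_l\|_{s,g}\to\infty$, yielding the desired unbounded sequence.
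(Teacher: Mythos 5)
Your proposal is correct and follows essentially the same route as the paper: the principle of symmetric criticality reduces the problem to the restriction $J\big|_{X_G}$, and the fountain theorem is then applied after checking the same three ingredients the paper checks, namely the superlinear condition $(F_3)$ on the finite-dimensional subspaces, the vanishing of the embedding constants on the tail spaces $Z_k$ coming from the compact embedding in (ii), and the Palais--Smale condition obtained from $(F_3)$ together with that compactness. Your final step, using $(F_3)$ to convert the unbounded critical values produced by the fountain theorem into an unbounded sequence of critical points, is likewise the intended conclusion, so nothing essential is missing.
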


To prove the condition (i) of the above lemma, it is sufficient to show that the subgroup $G\subset O(n+1)$ acts isometrically on $H^s(\mathbb{S}^n, g)$ by the action 
\begin{equation*}
gu(\zeta):=u(g^{-1}(\zeta)), ~\zeta\in \mathbb{S}^n, ~g\in G.
\end{equation*}
\begin{lemma}
\label{gl5.10}
Let $G$ be a closed subgroup of $O(n+1)$ and $s\in (0,n/2)$. Let $g=\varphi^{4/(n-2s)}g_{\mathbb{S}^n}$ be a conformal metric on $\mathbb{S}^n$ such $\varphi$ is $G$-invariant. Then $G$ acts isometrically on $H^s(\mathbb{S}^n, g)$.
\end{lemma}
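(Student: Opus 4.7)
The plan is to show directly that for every $\sigma \in G$ and every $u \in H^s(\mathbb{S}^n, g)$, the action $\sigma u := u \circ \sigma^{-1}$ preserves the inner product $\langle u, w\rangle_{s,g} := \int_{\mathbb{S}^n} w\, P^g_s u\, d\upsilon_g$ inducing the norm (\ref{g1.10}). The argument splits into three natural steps.

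First, I would observe that $G$ in fact acts by isometries of the Riemannian manifold $(\mathbb{S}^n, g)$, not just of $(\mathbb{S}^n, g_{\mathbb{S}^n})$. Since $G \subset O(n+1)$, each $\sigma \in G$ satisfies $\sigma^* g_{\mathbb{S}^n} = g_{\mathbb{S}^n}$; combined with $\varphi \circ \sigma = \varphi$ this yields
\begin{equation*}
\sigma^* g \;=\; (\varphi \circ \sigma)^{4/(n-2s)}\, \sigma^* g_{\mathbb{S}^n} \;=\; \varphi^{4/(n-2s)} g_{\mathbb{S}^n} \;=\; g.
\end{equation*}
In particular $\sigma^* d\upsilon_g = d\upsilon_g$, so the change of variables $\zeta \mapsto \sigma(\zeta)$ is measure-preserving.

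Second, I would establish the covariance identity $P^g_s(u \circ \sigma^{-1}) = (P^g_s u) \circ \sigma^{-1}$ for every smooth $u$. By the conformal transformation rule (\ref{r1.1}),
\begin{equation*}
P^g_s(u \circ \sigma^{-1}) \;=\; \varphi^{-\frac{n+2s}{n-2s}}\, P_s\bigl(\varphi \cdot (u \circ \sigma^{-1})\bigr),
\end{equation*}
and the $G$-invariance of $\varphi$ lets me rewrite $\varphi \cdot (u \circ \sigma^{-1}) = (\varphi u) \circ \sigma^{-1}$. Now the standard sphere operator $P_s = P^{g_{\mathbb{S}^n}}_s$ is a spectral function of $-\Delta_{g_{\mathbb{S}^n}}$, as displayed in the introduction via $P_s = \Gamma(B+\tfrac12+s)/\Gamma(B+\tfrac12-s)$ with $B = \sqrt{-\Delta_{g_{\mathbb{S}^n}} + ((n-1)/2)^2}$; since $\sigma \in O(n+1)$ acts by isometries of $(\mathbb{S}^n, g_{\mathbb{S}^n})$, the Laplace--Beltrami operator commutes with the pullback by $\sigma^{-1}$, and hence so does $P_s$. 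Thus $P_s\bigl((\varphi u) \circ \sigma^{-1}\bigr) = (P_s(\varphi u)) \circ \sigma^{-1}$, and invoking $\varphi \circ \sigma = \varphi$ one more time to factor $\varphi^{-(n+2s)/(n-2s)}$ inside the composition yields the desired covariance.

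Third, combining the two previous steps with the change of variables $\zeta \mapsto \sigma(\zeta)$ gives, for $u \in C^\infty(\mathbb{S}^n)$,
\begin{align*}
\|\sigma u\|_{s,g}^2
&= \int_{\mathbb{S}^n} (u\circ \sigma^{-1})(\zeta)\;\bigl(P^g_s(u\circ\sigma^{-1})\bigr)(\zeta)\, d\upsilon_g(\zeta) \\
&= \int_{\mathbb{S}^n} (u\circ \sigma^{-1})(\zeta)\;(P^g_s u)(\sigma^{-1}\zeta)\, d\upsilon_g(\zeta) \\
&= \int_{\mathbb{S}^n} u(\zeta)\, P^g_s u(\zeta)\, d\upsilon_g(\zeta) \;=\; \|u\|_{s,g}^2,
\end{align*}
and the identity extends to all $u \in H^s(\mathbb{S}^n, g)$ by density, since $\sigma$ clearly maps $C^\infty(\mathbb{S}^n)$ into itself. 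The main obstacle is the covariance of $P^g_s$ in Step 2; once this is in hand the rest is a routine change of variables. Note that the $G$-invariance of $\varphi$ is used twice in the argument, first to realize $G$ as a subgroup of $\mathrm{Isom}_g(\mathbb{S}^n)$ and then to transfer the covariance of the spectrally-defined operator $P_s$ on the round sphere to the conformally-transformed operator $P^g_s$.
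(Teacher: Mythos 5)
Your argument is correct, and it reaches the same key identity as the paper by a somewhat different route. The crux in both cases is that $P_s$ commutes with rotations so that, after invoking \eqref{r1.1} and the $G$-invariance of $\varphi$, the norm $\|gu\|_{s,g}$ reduces by a change of variables to $\|u\|_{s,g}$. The paper verifies this commutation in two pieces: for the fractional part $P_{s-\lfloor s\rfloor}$ it uses the Pavlov--Samko singular-integral representation \eqref{g6.15}, and for general $s\in(0,n/2)$ it invokes the factorization $P_s=\prod_{j=1}^{\lfloor s\rfloor}(P_1+c_{j,s})\,P_{s-\lfloor s\rfloor}$ (cited from the literature), each factor being manifestly equivariant under $O(n+1)$. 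You instead argue once and for all from the spectral definition $P_s=\Gamma(B+\tfrac12+s)/\Gamma(B+\tfrac12-s)$ with $B=\sqrt{-\Delta_{g_{\mathbb{S}^n}}+((n-1)/2)^2}$: since the pullback by an isometry of the round sphere commutes with $-\Delta_{g_{\mathbb{S}^n}}$ (equivalently, preserves each spherical-harmonic eigenspace, on which $P_s$ acts by a scalar), it commutes with $P_s$ for every $s\in(0,n/2)$, with no case distinction between the integer and fractional parts. You also add the clean preliminary observation that $\sigma^*g=g$, so that $G\subset \mathrm{Isom}_g(\mathbb{S}^n)$ and $d\upsilon_g$ itself is preserved, and you phrase the conclusion as genuine covariance $P^g_s(u\circ\sigma^{-1})=(P^g_s u)\circ\sigma^{-1}$ before integrating; the paper works directly with the round-sphere form $\int \varphi(gu)P_s(\varphi\,gu)\,d\upsilon_{g_{\mathbb{S}^n}}$ and $d\upsilon_{g_{\mathbb{S}^n}}$. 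What each approach buys: yours is uniform in $s$, avoids the explicit product formula and the integral representation, and makes the conceptual point that the whole group acts by $g$-isometries; the paper's is more explicit and self-contained relative to formulas already displayed in the text. Your density extension at the end is the standard one and is fine, since the action is linear and norm-preserving on $C^\infty(\mathbb{S}^n)$.
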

\begin{proof}
Using (\ref{g6.15}), it is easy to show that if $\sigma\in(0,1)$, then
\begin{equation}
\label{ge5.63}
P_{\sigma}(\varphi g u)(g\zeta) = P_{\sigma}(\varphi u)(\zeta) \text{ for all } \zeta \in \mathbb{S} \text{ and } g\in G.
\end{equation} 
From (\ref{r1.1}), \cite{MMG1} and (\ref{ge5.63}), we have
\begin{equation*}
\begin{aligned}
\|gu\|_{s,g} & =\int_{\mathbb{S}^n}\varphi(\zeta) (gu)(\zeta)P_s(\varphi g u)(\zeta)d\upsilon_{g_{\mathbb{S}^n}} \\
& = \int_{\mathbb{S}^n}\varphi(g\zeta) u(\zeta)P_s(\varphi g u)(g\zeta)d\upsilon_{g_{\mathbb{S}^n}} \\
& =  \int_{\mathbb{S}^n}\varphi(g\zeta) u(\zeta) \prod^{\lfloor s \rfloor}_{j=1}\left( P_1 + c_{j,s} \right)P_{s-\lfloor s \rfloor}(\varphi g u)(g\zeta)d\upsilon_{g_{\mathbb{S}^n}} \\
& = \int_{\mathbb{S}^n}(\varphi u)(\zeta)P_s(\varphi u)(\zeta)d\upsilon_{g_{\mathbb{S}^n}} ,
\end{aligned}
\end{equation*}
where $c_{j,s}$ are constants depending on $n$ and $s$.
\end{proof}

In the case $s=1$, $g=g_{\mathbb{S}^n}$ and $f(\cdot,t) = t^{(n+2)/(n-2)}-R^g_s t$,  Ding \cite{DW} considered an infinite dimensional closed subset $X_G\subset H^1(\mathbb{S}^n,g_{\mathbb{S}^n})$ and showed that the embedding $X_G\hookrightarrow L^p$ is compact. To show this compactness he used Sobolev's inequality for functions in $H^1$ on limited domains of $\mathbb{R}^k$, $k<n$, and some integral inequalities. In order to show the item (ii) of Lemma \ref{gl4.1} we will use the Sobolev's inequality in fractional spaces on domains of $\mathbb{R}^n$ (see \cite{DNPV}).

For $s\in(0,n/2)$, and $n, k$ two integers, we define $p^*=p^*(n,k,s)$ by
\begin{equation*}
p^*=\frac{(n-k)2}{n-k-2s} \text{ if } n-k>2s \text{ and } p^*=+\infty \text{ if } n-k<2s. 
\end{equation*}
When $k\geq 1$ and $2s<n$, one then has that $p^*>\frac{2n}{n-2s}$ (the critical Sobolev exponent
for the embedding of $H^s(\mathbb{S}^n, g)$ in $L^p(\mathbb{S}^n,g)$). The purpose of the following result is to evidence a subset $X_G\subset H^s(\mathbb{S}^n,g)$ such that it can satisfy the conditions of Lemma \ref{gl4.1}:

\begin{lemma}
\label{gl4.3}
 Let $g=\varphi^{\frac{4}{n-2s}}g_{\mathbb{S}^n}$ be a conformal metric on $\mathbb{S}^n$ with $0<\varphi\in C^{\infty}(\mathbb{S}^n)$ and $s\in(0,n/2)$. Let $G$ be a closed subgroup of $Isom_g(\mathbb{S}^n)$, $k=\min_{x\in \mathbb{S}^n}\dim O^x_G$ and $p^*(n,k,s)$ be as above.
\begin{enumerate}
\item[(a)] If $0<s<1$, then the embedding  $H^s_G(\mathbb{S}^n,g) \hookrightarrow L^p(\mathbb{S}^n,g)$ is continuous if $1\leq p \leq p^*(n,k,s)$ and is compact if $p < p^*(n,k,s)$.
\item[(b)] If $1\leq s< n/2$, then the embedding  $H^s_G(\mathbb{S}^n,g) \hookrightarrow L^p(\mathbb{S}^n,g)$ is continuous if $1\leq p \leq p^*(n,k,\lfloor s \rfloor p_0)p_0$ and is compact if $p < p^*(n,k, \lfloor s \rfloor p_0)p_0$, where 
\begin{equation*}
p_0=\frac{n}{n-2(s - \lfloor s \rfloor)}.
\end{equation*}
\end{enumerate} 
 
\end{lemma}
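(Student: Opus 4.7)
My plan is to adapt the symmetry-reduction method of Hebey--Vaugon \cite{HV} to the fractional setting, combining $G$-invariance with the Euclidean fractional Sobolev embeddings of \cite{DNPV}. The key geometric input is that a $G$-invariant function on $\mathbb{S}^n$ is constant along every orbit, so near a point whose orbit has dimension $k_0$ it depends on only $n-k_0$ transverse variables; since $p^*(n,\cdot,s)$ is monotone in the orbit dimension and $k=\min_x\dim O^x_G$, the worst case is along orbits of dimension $k$, and it suffices to work locally on tubular neighborhoods of such orbits. Using compactness of $\mathbb{S}^n$ and the slice theorem for the smooth action of the compact Lie group $G$, I would produce a finite $G$-equivariant cover $\{U_\alpha\}$ with $U_\alpha\cong V_\alpha\times\mathcal{O}_\alpha$, where $V_\alpha\subset\mathbb{R}^{n-k_\alpha}$ is a transverse slice and $\mathcal{O}_\alpha$ sits inside an orbit of dimension $k_\alpha\geq k$. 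A $G$-invariant $u$ pulls back to $\tilde u_\alpha(y,z)=\bar u_\alpha(y)$, and a $G$-invariant partition of unity (obtained by averaging over $G$) reduces the global embedding to local estimates on the $\bar u_\alpha$.

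For case (a) I would start from the Gagliardo representation of $\|\cdot\|_{s,g}$ that follows from \eqref{g6.15} and \eqref{r1.1}. In each $U_\alpha$, integrating the kernel $|\zeta-\omega|^{-(n+2s)}$ along the orbit fiber $\mathcal{O}_\alpha$ is dominated below by a positive multiple of $|y-y'|^{-((n-k_\alpha)+2s)}$, thanks to the smooth embedding of orbits, the positivity and smoothness of $\varphi$, and a positive lower bound for the orbit volume; this is the fractional analog of the dimensional reduction used by Hebey--Vaugon. The Euclidean embedding $W^{s,2}(\mathbb{R}^{n-k_\alpha})\hookrightarrow L^{p^*(n,k_\alpha,s)}$ from \cite{DNPV} then yields continuity with the claimed exponent $p^*(n,k,s)$ (using the monotonicity $p^*(n,k_\alpha,s)\geq p^*(n,k,s)$), while the fractional Rellich--Kondrachov theorem gives compactness for $p<p^*(n,k,s)$.

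For case (b) split $s=\lfloor s\rfloor+\sigma$ with $\sigma=s-\lfloor s\rfloor\in[0,1)$. Using the factorization $P_s\sim\prod_{j=1}^{\lfloor s\rfloor}(P_1+c_{j,s})\circ P_\sigma$ (the product representation used in the proof of Lemma \ref{gl5.10}), one sees that $u\in H^s(\mathbb{S}^n,g)$ forces the derivatives of $u$ up to order $\lfloor s\rfloor$ to carry an extra fractional regularity of order $\sigma$. Applying the standard fractional Sobolev embedding on $\mathbb{R}^n$ in full dimension (no invariance used at this step, as derivatives of $u$ are only $G$-equivariant) yields $D^{\lfloor s\rfloor}u\in L^{2p_0}$ with $p_0=n/(n-2\sigma)$, so $u$ itself belongs to $W^{\lfloor s\rfloor,2p_0}_G$. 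Now the classical Hebey--Vaugon embedding in reduced dimension $n-k$ gives
\[
W^{\lfloor s\rfloor,2p_0}_G\hookrightarrow L^{\frac{(n-k)(2p_0)}{(n-k)-2\lfloor s\rfloor p_0}}=L^{p^*(n,k,\lfloor s\rfloor p_0)\,p_0},
\]
which is exactly the claimed exponent, under the condition $(n-k)>2\lfloor s\rfloor p_0$; compactness for strictly smaller $p$ follows by the same bootstrap, combining fractional Rellich--Kondrachov with the classical compact embedding in the integer step.

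The main obstacle I anticipate is making the dimensional reduction of the nonlocal Gagliardo kernel rigorous: the fiber integral over an orbit requires uniform geometric control (injectivity radius of the orbit foliation, bounded second fundamental form) that is clean on the principal stratum of the $G$-action but must be extended to non-principal orbits. I would handle this by first proving the estimate on the open dense principal stratum, where the orbit dimension is maximal and the slices fit into a smooth fiber bundle, and then extending by a density argument using $G$-invariant cutoffs, mirroring the corresponding extension step in \cite{HV}. Once this geometric piece is in place, the remaining fractional and integer Sobolev bounds are routine.
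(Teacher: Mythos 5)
Your plan is essentially the paper's own proof: in (a) the Hebey--Vaugon chart/slice reduction of the Gagliardo kernel to the transverse dimension $n-k$ followed by the Euclidean fractional embedding of \cite{DNPV} over a finite cover (with the conformal factor handled by positivity of $\varphi$ --- though note the $P^g_s$-energy is the Gagliardo seminorm of $\varphi u$, not of $u$, so the absorption of the cross term via an $\varepsilon$-Young argument as in the paper should be made explicit), and in (b) exactly the paper's two-step route $H^s_G(\mathbb{S}^n,g)\hookrightarrow W^{\lfloor s\rfloor,2p_0}_G\hookrightarrow L^{p^*(n,k,\lfloor s\rfloor p_0)p_0}$, where the first step is the full-dimensional fractional estimate (Theorem \ref{gat1.1}(iii)) and the second is the integer-order symmetric embedding, which the paper derives by Aubin's one-derivative-at-a-time iteration and you invoke directly in its higher-order Hebey--Vaugon form. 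The proposal is correct and takes the same approach.
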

\begin{proof}
\
(a) For $0<s<1$, the proof is divided into two cases.

{\it Case 1: $g=g_{\mathbb{S}^n}$.}  If $k = 0$ the result is a straightforward consequence of the standard Sobolev embedding theorem. Hence, we assume in the sequel that $k \geq 1$. By \cite[ Lemma 1]{HV}, one then has that for each $z \in \mathbb{S}^n$ there are a chart $(U, h)$ and $\delta>0$ such that
\begin{enumerate}
\item[(i)] $h(U)= B^{r}_{6\delta}\times  B^{n-r}_{\delta}$, where $B^{r}_{6\delta}$ and $B^{n-r}_{\delta}$ are open balls of $\mathbb{R}^{r}$ and $\mathbb{R}^{n-r}$ with center at $h(z)$, and $r\in \mathbb{N}$ satisfies $r\geq k$,
\item[(ii)] $h$ and $h^{-1}$ are Lipschitz on $U$ and $ B^{r}_{6\delta}\times B^{n-r}_{\delta}$,
\item[(iii)] for any $\tilde{z}\in U$, $B^{r}_{6\delta}\times \Pi_2(h(\tilde{z}))\subset h(O^z_G\cap U)$, where $\Pi_2: \mathbb{R}^r\times \mathbb{R}^{n-r} \rightarrow \mathbb{R}^{n-r}$ is the second projection,
\item[(iv)] there is a constant $c>0$ such that $c^{-1}\delta_i^j \leq g_{ij}\leq c\delta_i^j $ as linear form, where $g_{ij}$ are the components of $g$ on $(U,h)$.  
\end{enumerate}

Now, we consider a function $G$-invariant $u \in C^{\infty}(\mathbb{S}^n)$. According to (iii), one has that for any $x, x' \in  B^{r}_{6\delta}$, and any $ y\in B^{n-r}_{\delta} $, $u\circ h^{-1}(x,y)=u\circ h^{-1}(x',y)$. Then we can define a function $\tilde{u}\in C^{\infty}(B^{n-r}_{\delta},\mathbb{R})$ such that for any $x\in B^{r}_{6\delta}$ and any $y\in  B^{n-r}_{\delta}  $,
\begin{equation*}
\tilde{u}(y)=u\circ h^{-1}(x,y).
\end{equation*}
 We then get that
\begin{align}
& \int_{U}\int_{U}\frac{[u(\zeta)-u(\omega)]^2}{|\zeta-\omega|^{n+2s}}d\upsilon_g(\zeta) d\upsilon_g(\omega) \notag \\
& =\int_{(B^r_{6\delta}\times B^{n-r}_{\delta})^2}\frac{[u\circ h^{-1}(x,y)-u\circ h^{-1}(x',y')]^2}{|h^{-1}(x,y)-h^{-1}(x',y')|^{n+2s}}\sqrt{det({g_{ij}}_{\zeta})det({g_{ij}}_{\omega})}dxdydx'dy' \notag \\
\label{3.8}
& \geq C\int_{B^{n-r}_{\delta}}\int_{B^{n-r}_{\delta}}\int_{B^{r}_{6\delta}}\int_{B^{r}_{6\delta}}\frac{[\tilde{u}(y)-\tilde{u}(y')]^2}{|(x,y)-(x',y')|^{n+2s}}dxdx'dydy'.
\end{align}
Making change variable we have 
\begin{align}
 \int_{B^{r}_{6\delta}}\int_{B^{r}_{6\delta}}&\frac{1}{|(x,y)-(x',y')|^{n+2s}} dx'dx \notag \\
& = \int_{B^{r}_{6\delta}(0)}\int_{B^{r}_{6\delta}(0)}\frac{1}{[|x-x'|^2+|y-y'|^2]^{\frac{n+2s}{2}}}dx'dx \notag \\
& = \int_{B^{r}_{6\delta}(0)}\int_{B^{r}_{6\delta}(x)}\frac{1}{[|w|^2+|y-y'|^2]^{\frac{n+2s}{2}}} dw dx \notag\\
& \geq \int_{B^{r}_{3\delta}(0)\backslash B^{r}_{2\delta}(0)}\int_{B^{r}_{6\delta}(x)}\frac{1}{[|w|^2+|y-y'|^2]^{\frac{n+2s}{2}}} dwdx \notag \\
& \geq \int_{B^{r}_{3\delta}(0)\backslash B^{r}_{2\delta}(0)}\int_{B^{r}_{\delta}(0)}\frac{1}{[|w|^2+|y-y'|^2]^{\frac{n+2s}{2}}} dwdx \notag \\
& \geq \int_{B^{r}_{3\delta}(0)\backslash B^{r}_{2\delta}(0)}\int_{B^{r}_{\frac{|y-y'|}{2}}(0)}\frac{1}{[|w|^2+|y-y'|^2]^{\frac{n+2s}{2}}} dwdx \notag \\
& \geq C \int_{B^{r}_{3\delta}(0)\backslash B^{r}_{2\delta}(0)}\frac{1}{|y-y'|^{n+2s}}\int_{B^{r}_{\frac{|y-y'|}{2}}(0)}dwdx \notag \\
& \geq C \int_{B^{r}_{3\delta}(0)\backslash B^{r}_{2\delta}(0)}\frac{1}{|y-y'|^{n-r+2s}}dx \notag \\
\label{3.9}
& \geq C\frac{1}{|y-y'|^{n-r+2s}},
\end{align}
where $C=C(\delta,n,r)>0$. From (\ref{3.8}), (\ref{3.9}) we obtain
\begin{equation*}
\int_{U}\int_{U}\frac{[u(\zeta)-u(\omega)]^2}{|\zeta-\omega|^{n+2s}}d\upsilon_g(\zeta) d\upsilon_g(\omega)\geq C_1 \int_{B^{n-r}_{\delta}}\int_{B^{n-r}_{\delta}}\frac{[\tilde{u}(y)-\tilde{u}(y')]^2}{|y-y'|^{n-r+2s}}dy'dy.
\end{equation*}
Similarly, we can prove that for any real number $p\geq 1$,
\begin{equation*}
\label{3.11}
C_2^{-1}\int_{B^{n-r}_{\delta}}|u|^pdy \leq \int_{U}|u|^p d\upsilon_g \leq C_2 \int_{B^{n-r}_{\delta}}|u|^pdy,
\end{equation*}
for some  $C_2=C_2(\delta,n,r)>0$. Combining these inequalities and the Sobolev embedding theorem for bounded domains of Euclidean spaces (see for instance \cite{DNPV}), we have that:
\begin{enumerate}
\item[(v)] if $n-r \leq 2s$, then for any real number $p\geq 1$, there exists $C>0$ such that for any $G$-invariant $u\in C^{\infty}(\mathbb{S})$,
\begin{equation*}
\left( \int_{U}|u|^p \right)^{\frac{2}{p}}\leq C \left\{\kappa_1 \int_{U}\int_{U}\frac{[u(\zeta)-u(\omega)]^2}{|\zeta-\omega|^{n+2s}}d\upsilon_g(\zeta) d\upsilon_g(\omega)+ \kappa_2\int_{U}|u|^2 d\upsilon_g \right\},
\end{equation*} 
where
\begin{equation*}
\kappa_1=\frac{	C_{n,-s}}{2}=\frac{2^{2s-1}s\Gamma(\frac{n+2s}{2})}{ \pi^{\frac{n}{2}}\Gamma(1-s)}~ \text{ and } ~\kappa_2 =\frac{\Gamma(\frac{n}{2}+s)}{\Gamma(\frac{n}{2}-s)};
\end{equation*}
\item[(vi)] if $n-r>2s$, then the inequality above hold for $1\leq p \leq \frac{(n-r)2}{n-r-2s}$.
\end{enumerate}
But we have $n-r\leq n-k$, so that $p^*(n,r,s)\geq p^*(n,k,s)$.

Finally, $\mathbb{S}^n$ can be covered by a finite number of charts $(U_{\alpha},h_{\alpha})$, $1\leq \alpha \leq N$ such that satisfy (i)-(vi). Thus, the global inequality now follows easily:
\begin{equation*}
\|u\|_{L^p(\mathbb{S}^n)}\leq C \|u\|_{s,g_{\mathbb{S}^n}}, ~1\leq p\leq p^*,
\end{equation*}
for all $u\in H^s_G(\mathbb{S},g_{\mathbb{S}^n})$. This proves that $H^s_G(\mathbb{S},g_{\mathbb{S}^n}) \hookrightarrow L^p(\mathbb{S}^n)$ is continuous for $1\leq p\leq p^*$. By standard arguments, one easily gets that the embedding is compact provided that $p < p^*$.

{\it Case 2: $g=\varphi^{\frac{4}{n-2s}}g_{\mathbb{S}^n}$.} It is easy to show that there is a $\varepsilon\in(0,1)$ such that
\begin{equation*}
\frac{(\varepsilon^2-1)}{\varepsilon^2}\kappa_1\int_{\mathbb{S}^n}\frac{[\varphi(\zeta)-\varphi(\omega)]^2}{|\zeta-\omega|^{n+2s}}d\upsilon_g(\zeta)+\kappa_2\varphi(\omega)\geq 0~\text{ for all } \omega\in \mathbb{S}^n.
\end{equation*}
By the conformal transformation relation we have that for $u\in H^s_G(\mathbb{S}^n,g)$,
\begin{align*}
&\int_{\mathbb{S}^n}uP^g_su d\upsilon_g\\
 & = \int_{\mathbb{S}^n} u\varphi P_s (u\varphi) d\upsilon_{g_{\mathbb{S}^n}}\\
&= \kappa_1\int_{\mathbb{S}^n}\int_{\mathbb{S}^n}\frac{[u(\zeta)\varphi(\zeta)-u(\omega)\varphi(\omega)]^2}{|\zeta-\omega|^{n+2s}}d\upsilon_{g_{\mathbb{S}^n}}^{(\zeta)} d\upsilon_{g_{\mathbb{S}^n}}^{(\omega)}+\kappa_2\int_{\mathbb{S}^n}u^2\varphi^2 d\upsilon_{g_{\mathbb{S}^n}}\\
& = \kappa_1\int_{\mathbb{S}^n}\int_{\mathbb{S}^n}\left\{\frac{\varphi(\zeta)^2[u(\zeta)-u(\omega)]^2}{|\zeta-\omega|^{n+2s}} + \frac{u(\omega)^2[\varphi(\zeta)-\varphi(\omega)]^2}{|\zeta-\omega|^{n+2s}} \right\} d\upsilon_{g_{\mathbb{S}^n}}^{(\zeta)} d\upsilon_{g_{\mathbb{S}^n}}^{(\omega)}\\
& + \kappa_1 \int_{\mathbb{S}^n}\int_{\mathbb{S}^n}\frac{\varphi(\zeta)[u(\zeta)-u(\omega)] u(\omega)[\varphi(\zeta)-\varphi(\omega)]}{|\zeta-\omega|^{n+2s}}  d\upsilon_{g_{\mathbb{S}^n}}^{(\zeta)} d\upsilon_{g_{\mathbb{S}^n}}^{(\omega)}+\kappa_2\int_{\mathbb{S}^n}u^2\varphi^2 d\upsilon_{g_{\mathbb{S}^n}}\\
& \geq \kappa_1\int_{\mathbb{S}^n}\int_{\mathbb{S}^n}(1-\varepsilon^2)\frac{\varphi(\zeta)^2[u(\zeta)-u(\omega)]^2}{|\zeta-\omega|^{n+2s}} d\upsilon_{g_{\mathbb{S}^n}}^{(\zeta)} d\upsilon_{g_{\mathbb{S}^n}}^{(\omega)} \\
& + \kappa_1\int_{\mathbb{S}^n}\int_{\mathbb{S}^n}\left( 1-\frac{1}{\varepsilon^2} \right) \frac{u(\omega)^2[\varphi(\zeta)-\varphi(\omega)]^2}{|\zeta-\omega|^{n+2s}}  d\upsilon_{g_{\mathbb{S}^n}}^{(\zeta)} d\upsilon_{g_{\mathbb{S}^n}}^{(\omega)} + \kappa_2\int_{\mathbb{S}^n}u^2\varphi^2 d\upsilon_{g_{\mathbb{S}^n}}\\
& \geq \kappa_1\int_{\mathbb{S}^n}\int_{\mathbb{S}^n}(1-\varepsilon^2)\frac{\varphi(\zeta)^2[u(\zeta)-u(\omega)]^2}{|\zeta-\omega|^{n+2s}} d\upsilon_{g_{\mathbb{S}^n}}^{(\zeta)} d\upsilon_{g_{\mathbb{S}^n}}^{(\omega)}\\
& + \int_{\mathbb{S}^n}u(\omega)\int_{\mathbb{S}^n}\left\{\kappa_1\left( 1-\frac{1}{\varepsilon^2} \right) \frac{[\varphi(\zeta)-\varphi(\omega)]^2}{|\zeta-\omega|^{n+2s}}+\kappa_2\varphi(\omega)\right\}d\upsilon_{g_{\mathbb{S}^n}}^{(\zeta)} d\upsilon_{g_{\mathbb{S}^n}}^{(\omega)}\\
& \geq C\int_{\mathbb{S}^n}\int_{\mathbb{S}^n}\frac{[u(\zeta)-u(\omega)]^2}{|\zeta-\omega|^{n+2s}} d\upsilon_{g}^{(\zeta)} d\upsilon_{g}^{(\omega)},
\end{align*}
where the last step is due to the fact that $\min_{\zeta\in \mathbb{S}^n}\varphi(\zeta)>0$, and $C>0$. Using the last inequality and following the arguments of the previous case, we completed the proof of item (a).

(b)  If $s=1$, this lemma is a special case of a result of Hebey and Vaugon \cite[Corollary 1]{HV}. For $s>1$, we use an iterative argument developed by Aubin \cite[Proposition 2.11]{Aubin} to prove that
\begin{equation*}
W^{\lfloor s \rfloor, q}_G(\mathbb{S},g) \hookrightarrow W^{\lfloor s \rfloor -1, q_1}_G(\mathbb{S},g) \hookrightarrow W^{\lfloor s \rfloor -2, q_2}_G(\mathbb{S},g) \hookrightarrow ... \hookrightarrow L^{q_{\lfloor s \rfloor}}(\mathbb{S},g),
\end{equation*}
where $q_i=p^*(n,k,q_{i-1}/2)q_{i-1}/2$ for $i\geq 2$ and $q_1=p^*(n,k,q/2)q/2$. An easy calculation yields $q_{\lfloor s \rfloor} = p^*(n,k,\lfloor s\rfloor q/2)q/2$. On the other hand, from theorem \ref{gat1.1} we have
\begin{equation*}
H^s_G(\mathbb{S}^n, g) \hookrightarrow H^{\lfloor s \rfloor, q}_G(\mathbb{S}^n,g)= W^{\lfloor s \rfloor, q}_G(\mathbb{S},g),
\end{equation*}
where $$q= \frac{2n}{n-2(s - \lfloor s \rfloor)}.$$
Combining these facts, we complete the proof.
\end{proof}

We mention some examples of subgroups $G\subset O(n+1)$ that act isometrically on $\mathbb{S}^n$.
\begin{example}
\label{gex5.4}
For each pair of integers $l,m\geq 2$ with $l+m=n+1$, let $G=O(l)\times O(m)\subset O(n)$. This subgroup was used by Ding \cite{DW} to establish the existence of infinitely many sign-changing solutions for the equation \eqref{1.2}. Since $k=\min_{\zeta\in \mathbb{S}^n}\dim O^{\zeta}_G\geq \min\{l-1,m-1\}$, we can see that Lemma \ref{gl4.3} holds for $g=g_{\mathbb{S}^n}$, and therefore, $H^s_G(\mathbb{S}^n, g_{\mathbb{S}^n}) \hookrightarrow L^p(\mathbb{S},g_{\mathbb{S}^n})$ is compact for $1\leq p \leq 2n/(n-2s)$. Thus, we can see that the proof of Theorem \ref{t1.1} follows immediately from Lemma \ref{gl4.1} with $X_G=H^s_G(\mathbb{S}^n, g_{\mathbb{S}^n})$.
\end{example}

\begin{example}
\label{gex5.5}
For $n=3$ or $n\geq 5$, we choose an integer $m$ between $2$ and $(n+1)/2$ with $2m \neq n$. We write the elements of $\mathbb{R}^{n+1}=\mathbb{R}^m\times \mathbb{R}^m\times \mathbb{R}^{n+1-2m}$ as $\zeta=(\zeta_1,\zeta_2,\zeta_3)$ with $\zeta_1,\zeta_2\in \mathbb{R}^m$ and $\zeta_3\in \mathbb{R}^{n+1-2m}$.  We define 
\begin{equation*}
Z=O(m)\times O(m)\times O(n+1-2m) \subset O(n+1)
\end{equation*}
and
\begin{equation*}
G=\langle Z \cup \{\tau\} \rangle \subset O(n+1),
\end{equation*}
where $\tau\in O(n+1)$ is given by $\tau(\zeta_1,\zeta_2,\zeta_3)=(\zeta_2,\zeta_1,\zeta_3)$. The subgroup $G$ was used by Bartsch and Willem \cite{BW} to find an unbounded sequence of nonradial solutions of the  equation
\begin{equation}
\label{ge5.61}
-\Delta v + b(|x|)v = \tilde{f}(|x|,v) \text{ in } \mathbb{R}^{N},~N =4 \text{ or } N\geq  6
\end{equation}
under suitable assumptions on $b$ and $\tilde{f}$. The elements of $G$ can be written uniquely as $z$ or $z\tau$  with $z\in Z$. The action on $G$ on $H^s(\mathbb{S},g_{\mathbb{S}^n})$ is defined as
\begin{equation}
\label{ge5.60}
(gu)(\zeta)=\Pi (g)u(g^{-1}\zeta) \text{ for } g\in G, ~\zeta\in \mathbb{S}^n,
\end{equation}
where $\Pi:G\rightarrow \{-1,1\}$ is given by $\Pi(z)=1$ and $\Pi(z\tau)=-1$. It is clear that each $g\in G$ acts isometrically on $H^s(\mathbb{S}^n,g_{\mathbb{S}^n})$ (see Lemma \ref{gl5.10}).  The embedding $H^s_G(\mathbb{S}^n,g_{\mathbb{S}^n}) \hookrightarrow L^p(\mathbb{S}^n,g_{\mathbb{S}^n})$ is compact for $1\leq p \leq 2n/(n-2s)$ because $k=\min_{\zeta\in \mathbb{S}^n}\dim O^{\zeta}_Z\geq 1$ and
\begin{equation*}
H^s_G(\mathbb{S}^n,g_{\mathbb{S}^n})\hookrightarrow H^s_Z(\mathbb{S}^n,g_{\mathbb{S}^n})\hookrightarrow L^{p}(\mathbb{S}^n,g_{\mathbb{S}^n}),
\end{equation*}
where the second embedding is compact (Example \ref{gex5.4} and Lemma \ref{gl4.3}). Moreover $X_G=H^s_G(\mathbb{S}^n,g_{\mathbb{S}^n})$ satisfies the assumptions of Lemma \ref{gl4.1}. Thus we obtain an unbounded sequence of points $ u_l $ of $J$, with $f(\cdot,t)=f(t)$, lie in $H^s_G(\mathbb{S}^n,g_{\mathbb{S}^n})$. They cannot be rotationally symmetric on $\mathbb{S}^n$ with respect to $\zeta_3$ because they satisfy
\begin{equation*}
u_l(\zeta_1,\zeta_2,\zeta_3) = -u_l(\tau (\zeta_1,\zeta_2,\zeta_3))= -u_l(\zeta_2,\zeta_1,\zeta_3) \text{ for every } \zeta \in \mathbb{S}^n.
\end{equation*}
\end{example}

\begin{example}
\label{gex5.6} In order to cover the case $n=4$, we consider the subgroup $Z=SO(3)\times SO(2)\subset O(5)$. Define $G= \langle Z \cup \{\tau\} \rangle \subset  O(5)$, where $\tau \in O(5)$ is given by 
\begin{equation*}
\tau(\zeta_1,\zeta_2,\zeta_3,\zeta_4,\zeta_5)=(-\zeta_1,-\zeta_2,-\zeta_3,\zeta_4,\zeta_5), ~\zeta_i\in \mathbb{R}, ~i=1,...,5.
\end{equation*}
 The subgroup $G$ was recently used by Biliotti and Siciliano \cite{BS} to find nonradial solutions of \eqref{ge5.61} for the case $N=5$, as well as for any nonlocal problems on $\mathbb{R}^5$. Following as in Example \ref{gex5.5},  we can define the action of $G$ on $H^s(\mathbb{S}^n,g_{\mathbb{S}^n})$ by \eqref{ge5.60} and thus obtain the compactness of $H^s_G(\mathbb{S}^n,g_{\mathbb{S}^n})\hookrightarrow L^{p}(\mathbb{S}^n,g_{\mathbb{S}^n})$. Moreover, since the functions $u_m(\zeta_1,\zeta_2,\zeta_3,\zeta_4,\zeta_5)=\zeta_1^m$ are in $H^s_G(\mathbb{S}^n,g_{\mathbb{S}^n})$ for each $m$ odd, then  $\dim H^s_G(\mathbb{S}^n,g_{\mathbb{S}^n})=+\infty$. Therefore, by Lemma \ref{gl4.1} we have obtained unbounded sequence $\{u_l\}_{l\in \mathbb{N}}$ of solutions of \eqref{g1.1} in $H^s(\mathbb{S}^n,g_{\mathbb{S}^n})$, with $f(\cdot,t)=f(t)$, such that
\begin{equation*}
u_l(\zeta_1,\zeta_2,\zeta_3,\zeta_4,\zeta_5) = -u_l(-\zeta_1,-\zeta_2,-\zeta_3,\zeta_4,\zeta_5).
\end{equation*}
\end{example}

\begin{remark}
The subgroups chosen in Examples \ref{gex5.5} and \ref{gex5.6} guarantee the existence of infinitely many non-radial sign-changing solutions for the equation \eqref{1.2} with $s\in(0,n/2)$ and $n\geq 3$. In fact, by Lemma \ref{l2.1} with the proper choice of stereographic projection, there exists an unbounded sequence $\{v_l\}_{l\in\mathbb{N}}$ in $D^{s,2}(\mathbb{R}^n)$ of non-radial solutions of \eqref{1.2}. However, as we saw in the introduction that any positive solution of \eqref{1.2} has the same energy in $D^{s,2}(\mathbb{R}^n)$. Therefore, the solutions $v_l$ change sign for $l$ large.
\end{remark}

 {\it Proof of Theorem \ref{gt1.5}.} By Lemma \ref{gl4.3}, the embedding $X_G:=H^s_G(\mathbb{S}^n,g)\hookrightarrow L^p(\mathbb{S}^n,g)$ is compact for $p\leq 2n/(n-2s)$. Therefore, we may apply Lemma \ref{gl4.1} to complete the proof of Theorem \ref{gt1.5}.
 \hfill $\square$
 
 {\it Proof of Corollary \ref{gc1.6}.} By Theorem \ref{gt1.5}, the equation \eqref{g1.1}, with $f(\cdot, t)=f(t) = |t|^{p-1}t - \lambda t$ and $\lambda\in (0,\lambda^*)$, has an unbounded sequence $\{u_l\}_{l\in\mathbb{N}}$ in $H^s(\mathbb{S}^n, g)$.  Unless, of subsequence, we can assume that $\{u_l\}_{l\in\mathbb{N}}$ is a positive sequence. Then, from Theorem \ref{gt2.1}, each $u_l$ is constan. Multiplying \eqref{g1.1} by $u_l$ and integrating, we have
 \begin{equation*}
 f(u_l)u_l [vol_g(\mathbb{S}^n)]=  \int_{\mathbb{S}^n}u_l(P^g_s - R^g_s) u_l d\upsilon_g =0,
 \end{equation*}
 and
 \begin{equation*}
 u_l^2\int_{\mathbb{S}^n}R^g_sd\upsilon_g  = \int_{\mathbb{S}^n}u_l(R^g_s u +f(u_l))d\upsilon_g = \|u_l\|^2_{s, g} \rightarrow \infty \text{ as } l\rightarrow \infty.
 \end{equation*}
Then $u_l \rightarrow \infty$ when $l\rightarrow \infty$. On the other hand, there exist three positive constants $a_1$, $a_2$ and $\mu>2$ such that
\begin{equation*}
tf(t)\geq \mu F(t)\geq a_1 t^{\mu}-a_2 \text{ for all } t>0 \text{ large}.
\end{equation*}
Then
\begin{equation*}
0=u_lf(u_l)\geq a_1 u_l^{\mu}-a_2 >0 \text{ for } l \text{ large },
\end{equation*}
which is a contradiction.
\hfill $\square$

{\it Proof of Corollary \ref{c1.1}.} It follows from \cite{ABR} and from the proof of Corollary \ref{gc1.6}.
\hfill $\square$

\section{Appendix}\label{s4}

\subsection{Sobolev type spaces and conformally invariant operators on the unit sphere}\

In this subsection, we recall some results for $P_s$ and Bessel potential spaces on spheres which can be found in \cite{Bra, CM, PS, Stri, Trie}.

The operators $P_s$ have eigenfunctions the spherical harmonics. Let $Y^{(k)}$ be a spherical harmonic of degree $k\geq 0$. Then for $s\in (0,n/2)$, we have
\begin{equation*}
P_s(Y^{(k)})=\frac{\Gamma(k+\frac{n}{2}+s)}{\Gamma(k+\frac{n}{2}-s)} Y^{(k)}.
\end{equation*}
If $g\in[\mathbb{S}^n]$ is a conformal metric on $\mathbb{S}^n$, then $(P^g_s)^{-1}$, acting in $L^2(\mathbb{S}^n,g)$, is compact, self-adjoint and positive \cite[Proposition 2.3]{CM}. Consequently, $P^g_s$ admits an unbounded sequence of eigenvalues depending on $g$. On the other hand, the existence of eigenvalues for the operator $P^g_s - R^g_s$ was shown in \cite[Appendix]{ABR} for the case $s\in(0,1)$. We believe that (\ref{g1.40}) holds for $s>1$.

Let $\Delta_{g_{\mathbb{S}^n}}$ be the Laplace-Beltrami operator on $(\mathbb{S}^n,g_{\mathbb{S}^n})$. For $s>0$ and $1<p<\infty$, the Bessel potential space $H^s_p(\mathbb{S}^n)$ is the set consisting of all  $u\in L^p(\mathbb{S}^n)$ such that $(1-\Delta)^{\frac{s}{2}}u\in L^p(\mathbb{S}^n)$. The norm in $H^s_p(\mathbb{S}^n)$ is defined by
\begin{equation*}
\|u\|_{H^s_p(\mathbb{S}^n)}:=\|(1-\Delta_{g_{\mathbb{S}^n}})^{\frac{s}{2}}u\|_{L^p(\mathbb{S}^n)}.
\end{equation*}
When $p=2$, then $H^s_2(\mathbb{S}^n)$ coincides with $H^s(\mathbb{S},g_{\mathbb{S}^n})$ and the norms $\|\cdot\|_{H^s_p(\mathbb{S}^n)}$ and $\|\cdot\|_{s,g_{\mathbb{S}^n}}$ are equivalents. Recall that $H^s(\mathbb{S}^n,g)$ denotes the closure of $C^{\infty}(\mathbb{S}^n)$ under the norm
\begin{equation*}
\|u\|^2_{s,g} := \int_{\mathbb{S}^n} uP^g_su~d\upsilon_g.
\end{equation*}  

\begin{theorem}
\label{gat1.1}\
\begin{enumerate}
\item[(i)] If $sp<n$, then the embedding $H^s_p(\mathbb{S}^n)\hookrightarrow L^{q}(\mathbb{S}^n)$ is continuous for $1\leq q \leq np/(n-sp)$ and compact for $q<np/(n-sp)$. 
\item[(ii)] If $s\in \mathbb{N}$, then $H^s_p(\mathbb{S}^n)=W^{s,p}(\mathbb{S}^n, g_{\mathbb{S}^n})$.
\item[(iii)] If $s=k+s_0$ with $k\in \mathbb{N}$ and $s_0\in (0,1)$, then the embedding $H^s(\mathbb{S},g) \hookrightarrow W^{k,\frac{2n}{n-2s_0}}(\mathbb{S}^n,g)$ is continuous.
\end{enumerate}
\end{theorem}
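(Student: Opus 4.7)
The plan is to reduce each of the three statements to known results on Euclidean space combined with the spectral structure of $(1-\Delta_{g_{\mathbb{S}^n}})^{s/2}$ on the sphere. I would first fix a finite atlas $\{(U_\alpha, h_\alpha)\}$ on $\mathbb{S}^n$ with $h_\alpha: U_\alpha \to V_\alpha \subset \mathbb{R}^n$ a diffeomorphism onto a bounded open set, together with a smooth partition of unity $\{\chi_\alpha\}$ subordinate to $\{U_\alpha\}$. Since the Bessel operator $(1-\Delta_{g_{\mathbb{S}^n}})^{s/2}$ is an elliptic classical pseudodifferential operator of order $s$ on the compact manifold $\mathbb{S}^n$, multiplication by a smooth cutoff followed by the chart pullback carries the norm $\|\cdot\|_{H^s_p(\mathbb{S}^n)}$ to an equivalent (up to a constant) norm on $H^s_p(\mathbb{R}^n)$, i.e.\ $\|\chi_\alpha u \circ h_\alpha^{-1}\|_{H^s_p(\mathbb{R}^n)} \leq C \|u\|_{H^s_p(\mathbb{S}^n)}$ and similarly in the reverse direction. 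This reduction is the common technical core for all three items.

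For (i), once the problem has been localized to $\mathbb{R}^n$ as above, I would invoke the classical fractional Sobolev embedding $H^s_p(\mathbb{R}^n) \hookrightarrow L^q(\mathbb{R}^n)$ for $1\leq q \leq np/(n-sp)$ (see e.g.\ Triebel \cite{Trie}), and sum the local bounds $\|\chi_\alpha u\|_{L^q(\mathbb{S}^n)}$ to obtain the global embedding $\|u\|_{L^q(\mathbb{S}^n)} \leq C\|u\|_{H^s_p(\mathbb{S}^n)}$. For compactness in the range $q < np/(n-sp)$, I would use the Rellich–Kondrachov theorem applied locally on each $V_\alpha$ to extract a convergent subsequence of $\{\chi_\alpha u_m \circ h_\alpha^{-1}\}$ in $L^q(V_\alpha)$, and then a diagonal argument over the finite atlas yields strong $L^q$ convergence on $\mathbb{S}^n$.

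For (ii), the identification $H^s_p(\mathbb{S}^n) = W^{s,p}(\mathbb{S}^n, g_{\mathbb{S}^n})$ when $s\in \mathbb{N}$ is equivalent, after localization, to the Euclidean statement $H^s_p(\mathbb{R}^n)=W^{s,p}(\mathbb{R}^n)$. The latter follows from the Mikhlin–Hörmander multiplier theorem: the ratios between the Bessel symbol $(1+|\xi|^2)^{s/2}$ and the polynomial symbols $(1+\sum_{|\alpha|\leq s}\xi^{2\alpha})^{1/2}$ satisfy the standard $L^p$ multiplier estimates for $1<p<\infty$, so both norms are equivalent on $\mathbb{R}^n$. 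Transferring back via the atlas gives the claim on $\mathbb{S}^n$.

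For (iii), I would exploit the lifting property of Bessel potential spaces. Write $s = k + s_0$ with $k\in\mathbb{N}$, $s_0\in(0,1)$, and note that $H^s(\mathbb{S}^n, g) = H^s_2(\mathbb{S}^n)$ with equivalent norms (both reducing to $\sum (1+\lambda_j)^s |a_j|^2<\infty$ in the spherical harmonic decomposition, by the spectral formula for $P_s$ recalled above). Then $u\in H^{k+s_0}_2$ means $(1-\Delta_{g_{\mathbb{S}^n}})^{k/2} u \in H^{s_0}_2$, and item (i) with $p=2$ applied to $s_0$ (noting $2s_0 < n$) gives $(1-\Delta_{g_{\mathbb{S}^n}})^{k/2} u \in L^{2n/(n-2s_0)}(\mathbb{S}^n)$. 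Applying item (ii) with the integer order $k$ and exponent $p = 2n/(n-2s_0)$ yields $u \in W^{k,2n/(n-2s_0)}(\mathbb{S}^n, g)$, with the required norm estimate obtained by chaining the constants. The main obstacle in this program is the justification of the Mikhlin-type multiplier theorem on the curved compact manifold $\mathbb{S}^n$, which in practice one handles either through the pseudodifferential calculus on compact manifolds or, more concretely on the sphere, by exploiting the spherical harmonic decomposition and the explicit asymptotics $\Gamma(k+a)/\Gamma(k+b) \sim k^{a-b}$ as $k\to\infty$; both routes are detailed in the references \cite{CM, Stri, Trie} that already enter the appendix.
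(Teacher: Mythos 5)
For parts (i) and (ii) the paper offers no proof at all (they are delegated to the references), and your localization sketches --- charts plus partition of unity, the Euclidean Bessel--Sobolev embedding with Rellich--Kondrachov for (i), and the Mikhlin--H\"ormander multiplier argument for (ii) --- are the standard proofs, so there is nothing to object to there. For (iii), your chain is exactly the paper's: write $s=k+s_0$, use the lifting property so that $(1-\Delta_{g_{\mathbb{S}^n}})^{k/2}$ of the function lies in $H^{s_0}_2$, apply (i) with $p=2$ to land in $L^{2n/(n-2s_0)}$, and apply (ii) at integer order $k$ with exponent $2n/(n-2s_0)$.

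The one genuine gap is in your opening reduction for (iii), namely the claim that $H^s(\mathbb{S}^n,g)=H^s_2(\mathbb{S}^n)$ with equivalent norms ``by the spectral formula for $P_s$''. That justification is only valid when $g=g_{\mathbb{S}^n}$. In the theorem (and in the place where it is used, Lemma \ref{gl4.3}(b)) $g=\varphi^{4/(n-2s)}g_{\mathbb{S}^n}$ is a general conformal metric, and by \eqref{r1.1} the norm $\|u\|_{s,g}^2=\int_{\mathbb{S}^n}uP^g_su\,d\upsilon_g$ equals $\int_{\mathbb{S}^n}(\varphi u)P_s(\varphi u)\,d\upsilon_{g_{\mathbb{S}^n}}$, i.e.\ it controls the spherical-harmonic coefficients of $\varphi u$, not of $u$. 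To get your asserted equivalence you would additionally need that multiplication by $\varphi^{\pm1}$ is bounded on the fractional space $H^s_2(\mathbb{S}^n)$ --- true, but a nontrivial fact (paraproduct/pseudodifferential or interpolation argument) that your proposal neither states nor proves. The paper sidesteps this entirely: it runs the whole chain on $\varphi u$, using \eqref{r1.1} together with the comparison of $P_s$ with $(1-\Delta_{g_{\mathbb{S}^n}})^{s}$ (Beckner \cite{Bec}), and only removes the factor $\varphi$ at the very end, at integer order $k$, where the Leibniz rule and the positivity and smoothness of $\varphi$ suffice; one also needs the easy remark that $W^{k,q}$-norms for $g$ and for $g_{\mathbb{S}^n}$ are equivalent, which both arguments leave implicit. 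So either restrict your spectral argument to $g=g_{\mathbb{S}^n}$ and then handle the conformal factor as the paper does, or supply the fractional multiplier bound; as written, the reduction step does not cover the stated generality.
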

\begin{proof}
The proof of (i) and (ii) can be found in the reference cited at the beginning of this section. For the proof of (iii), we use (\ref{r1.1}) and Becker \cite{Bec} to obtain
\begin{equation*}
\begin{aligned}
\int_{\mathbb{S}^n}uP^g_s u~d\upsilon_g & \geq C\|(1-\Delta_{g_{\mathbb{S}^n}})^{\frac{s}{2}}(\varphi u)\|_{L^2(\mathbb{S}^n)}\\
  & =C\|(1-\Delta_{g_{\mathbb{S}^n}})^{\frac{s_0}{2}}(1-\Delta_{g_{\mathbb{S}^n}})^{\frac{k}{2}}(\varphi u)\|_{L^2(\mathbb{S}^n)}\\
  &\geq C\| (1-\Delta_{g_{\mathbb{S}^n}})^{\frac{k}{2}}(\varphi u)\|_{L^{\frac{2n}{n-2s_0}}(\mathbb{S}^n)}\\
  & \geq C \|\varphi u\|_{H^{k,\frac{2n}{n-2s_0}}(\mathbb{S}^n)}
\end{aligned}
\end{equation*}
for all $u\in C^{\infty}(\mathbb{S}^n)$. 
\end{proof}

\bibliographystyle{abbrv}

\bibliography{bibliography}

\begin{thebibliography}{10}

\bibitem{ABR}
E.~Abreu, E.~Barbosa, and J.~Ramirez.
\newblock Uniqueness for the brezis-nirenberg type problems on spheres and
  hemispheres.
\newblock {\em arXiv - https://arxiv.org/abs/1906.09851}, 2019.

\bibitem{Aubin}
T.~Aubin.
\newblock {\em Nonlinear analysis on manifolds. {M}onge-{A}mp\`ere equations},
  volume 252 of {\em Grundlehren der Mathematischen Wissenschaften [Fundamental
  Principles of Mathematical Sciences]}.
\newblock Springer-Verlag, New York, 1982.

\bibitem{BSW}
T.~Bartsch, M.~Schneider, and T.~Weth.
\newblock Multiple solutions of a critical polyharmonic equation.
\newblock {\em J. Reine Angew. Math.}, 571:131--143, 2004.

\bibitem{BW}
T.~Bartsch and M.~Willem.
\newblock Infinitely many nonradial solutions of a {E}uclidean scalar field
  equation.
\newblock {\em J. Funct. Anal.}, 117(2):447--460, 1993.

\bibitem{Bec}
W.~Beckner.
\newblock Sharp {S}obolev inequalities on the sphere and the
  {M}oser-{T}rudinger inequality.
\newblock {\em Ann. of Math. (2)}, 138(1):213--242, 1993.

\bibitem{BV}
M.-F. Bidaut-V\'{e}ron and L.~V\'{e}ron.
\newblock Nonlinear elliptic equations on compact {R}iemannian manifolds and
  asymptotics of {E}mden equations.
\newblock {\em Invent. Math.}, 106(3):489--539, 1991.

\bibitem{BS}
L.~Biliotti and G.~Siciliano.
\newblock A group theoretic proof of a compactness lemma and existence of
  nonradial solutions for semilinear elliptic equations.
\newblock {\em arXiv - https://arxiv.org/abs/2002.01333}, 2020.

\bibitem{Bra}
T.~P. Branson.
\newblock Sharp inequalities, the functional determinant, and the complementary
  series.
\newblock {\em Trans. Amer. Math. Soc.}, 347(10):3671--3742, 1995.

\bibitem{BL}
H.~Brezis and Y.~Li.
\newblock Some nonlinear elliptic equations have only constant solutions.
\newblock {\em J. Partial Differential Equations}, 19(3):208--217, 2006.

\bibitem{CS}
L.~Caffarelli and L.~Silvestre.
\newblock An extension problem related to the fractional {L}aplacian.
\newblock {\em Comm. Partial Differential Equations}, 32(7-9):1245--1260, 2007.

\bibitem{CGS}
L.~A. Caffarelli, B.~Gidas, and J.~Spruck.
\newblock Asymptotic symmetry and local behavior of semilinear elliptic
  equations with critical {S}obolev growth.
\newblock {\em Comm. Pure Appl. Math.}, 42(3):271--297, 1989.

\bibitem{CG}
S.-Y.~A. Chang and M.~d.~M. Gonz\'{a}lez.
\newblock Fractional {L}aplacian in conformal geometry.
\newblock {\em Adv. Math.}, 226(2):1410--1432, 2011.

\bibitem{CW}
X.~Chang and Z.-Q. Wang.
\newblock Nodal and multiple solutions of nonlinear problems involving the
  fractional {L}aplacian.
\newblock {\em J. Differential Equations}, 256(8):2965--2992, 2014.

\bibitem{CLc}
W.~Chen and C.~Li.
\newblock Classification of positive solutions for nonlinear differential and
  integral systems with critical exponents.
\newblock {\em Acta Math. Sci. Ser. B (Engl. Ed.)}, 29(4):949--960, 2009.

\bibitem{CLO1}
W.~Chen, C.~Li, and B.~Ou.
\newblock Classification of solutions for an integral equation.
\newblock {\em Comm. Pure Appl. Math.}, 59(3):330--343, 2006.

\bibitem{CK}
W.~Choi and S.~Kim.
\newblock On perturbations of the fractional {Y}amabe problem.
\newblock {\em Calc. Var. Partial Differential Equations}, 56(1):Paper No. 14,
  46, 2017.

\bibitem{CT}
A.~Cotsiolis and N.~K. Tavoularis.
\newblock Best constants for {S}obolev inequalities for higher order fractional
  derivatives.
\newblock {\em J. Math. Anal. Appl.}, 295(1):225--236, 2004.

\bibitem{DG}
L.~Damascelli and F.~Gladiali.
\newblock Some nonexistence results for positive solutions of elliptic
  equations in unbounded domains.
\newblock {\em Rev. Mat. Iberoamericana}, 20(1):67--86, 2004.

\bibitem{DNPV}
E.~Di~Nezza, G.~Palatucci, and E.~Valdinoci.
\newblock Hitchhiker's guide to the fractional {S}obolev spaces.
\newblock {\em Bull. Sci. Math.}, 136(5):521--573, 2012.

\bibitem{DW}
W.~Y. Ding.
\newblock On a conformally invariant elliptic equation on {${\bf R}^n$}.
\newblock {\em Comm. Math. Phys.}, 107(2):331--335, 1986.

\bibitem{DEL}
J.~Dolbeault, M.~J. Esteban, and M.~Loss.
\newblock Nonlinear flows and rigidity results on compact manifolds.
\newblock {\em J. Funct. Anal.}, 267(5):1338--1363, 2014.

\bibitem{GNN}
B.~Gidas, W.~M. Ni, and L.~Nirenberg.
\newblock Symmetry and related properties via the maximum principle.
\newblock {\em Comm. Math. Phys.}, 68(3):209--243, 1979.

\bibitem{GS}
B.~Gidas and J.~Spruck.
\newblock Global and local behavior of positive solutions of nonlinear elliptic
  equations.
\newblock {\em Comm. Pure Appl. Math.}, 34(4):525--598, 1981.

\bibitem{MMG1}
M.~d.~M. Gonz\'{a}lez.
\newblock Recent progress on the fractional {L}aplacian in conformal geometry.
\newblock In {\em Recent developments in nonlocal theory}, pages 236--273. De
  Gruyter, Berlin, 2018.

\bibitem{GQ}
M.~d.~M. Gonz\'{a}lez and J.~Qing.
\newblock Fractional conformal {L}aplacians and fractional {Y}amabe problems.
\newblock {\em Anal. PDE}, 6(7):1535--1576, 2013.

\bibitem{GJM}
C.~R. Graham, R.~Jenne, L.~J. Mason, and G.~A.~J. Sparling.
\newblock Conformally invariant powers of the {L}aplacian. {I}. {E}xistence.
\newblock {\em J. London Math. Soc. (2)}, 46(3):557--565, 1992.

\bibitem{GZ}
C.~R. Graham and M.~Zworski.
\newblock Scattering matrix in conformal geometry.
\newblock {\em Invent. Math.}, 152(1):89--118, 2003.

\bibitem{GMM}
C.~Guidi, A.~Maalaoui, and V.~Martino.
\newblock Palais-{S}male sequences for the fractional {CR} {Y}amabe functional
  and multiplicity results.
\newblock {\em Calc. Var. Partial Differential Equations}, 57(6):Paper No. 152,
  27, 2018.

\bibitem{GLp}
Y.~Guo and J.~Liu.
\newblock Liouville-type theorems for polyharmonic equations in {$\Bbb R^N$}
  and in {$\Bbb R_+^N$}.
\newblock {\em Proc. Roy. Soc. Edinburgh Sect. A}, 138(2):339--359, 2008.

\bibitem{EH2}
E.~Hebey.
\newblock {\em Compactness and stability for nonlinear elliptic equations}.
\newblock Zurich Lectures in Advanced Mathematics. European Mathematical
  Society (EMS), Z\"{u}rich, 2014.

\bibitem{HV}
E.~Hebey and M.~Vaugon.
\newblock Sobolev spaces in the presence of symmetries.
\newblock {\em J. Math. Pures Appl. (9)}, 76(10):859--881, 1997.

\bibitem{JLX1}
T.~Jin, Y.~Li, and J.~Xiong.
\newblock On a fractional {N}irenberg problem, part {I}: blow up analysis and
  compactness of solutions.
\newblock {\em J. Eur. Math. Soc. (JEMS)}, 16(6):1111--1171, 2014.

\bibitem{JLX}
T.~Jin, Y.~Li, and J.~Xiong.
\newblock The {N}irenberg problem and its generalizations: a unified approach.
\newblock {\em Math. Ann.}, 369(1-2):109--151, 2017.

\bibitem{JLX2}
T.~Jin, Y.~Y. Li, and J.~Xiong.
\newblock On a fractional {N}irenberg problem, {P}art {II}: {E}xistence of
  solutions.
\newblock {\em Int. Math. Res. Not. IMRN}, 2015(6):1555--1589, 2015.

\bibitem{JX}
T.~Jin and J.~Xiong.
\newblock A fractional {Y}amabe flow and some applications.
\newblock {\em J. Reine Angew. Math.}, 696:187--223, 2014.

\bibitem{Kris}
A.~Krist\'{a}ly.
\newblock Nodal solutions for the fractional {Y}amabe problem on {H}eisenberg
  groups.
\newblock {\em Proc. Roy. Soc. Edinburgh Sect. A}, 150(2):771--788, 2020.

\bibitem{LZ}
Y.~Li and M.~Zhu.
\newblock Yamabe type equations on three-dimensional {R}iemannian manifolds.
\newblock {\em Commun. Contemp. Math.}, 1(1):1--50, 1999.

\bibitem{LV}
J.~R. Licois and L.~V\'{e}ron.
\newblock A class of nonlinear conservative elliptic equations in cylinders.
\newblock {\em Ann. Scuola Norm. Sup. Pisa Cl. Sci. (4)}, 26(2):249--283, 1998.

\bibitem{Lin}
C.-S. Lin.
\newblock A classification of solutions of a conformally invariant fourth order
  equation in {${\bf R}^n$}.
\newblock {\em Comment. Math. Helv.}, 73(2):206--231, 1998.

\bibitem{LN}
C.~S. Lin and W.-M. Ni.
\newblock On the diffusion coefficient of a semilinear {N}eumann problem.
\newblock In {\em Calculus of variations and partial differential equations
  ({T}rento, 1986)}, volume 1340 of {\em Lecture Notes in Math.}, pages
  160--174. Springer, Berlin, 1988.

\bibitem{Maa}
A.~Maalaoui.
\newblock Infinitely many solutions for the spinorial {Y}amabe problem on the
  round sphere.
\newblock {\em NoDEA Nonlinear Differential Equations Appl.}, 23(3):Art. 25,
  14, 2016.

\bibitem{CM}
C.~Morpurgo.
\newblock Sharp inequalities for functional integrals and traces of conformally
  invariant operators.
\newblock {\em Duke Math. J.}, 114(3):477--553, 2002.

\bibitem{NPX}
M.~Niu, Z.~Peng, and J.~Xiong.
\newblock Compactness of solutions to nonlocal elliptic equations.
\newblock {\em J. Funct. Anal.}, 275(9):2333--2372, 2018.

\bibitem{P1}
R.~S. Palais.
\newblock The principle of symmetric criticality.
\newblock {\em Comm. Math. Phys.}, 69(1):19--30, 1979.

\bibitem{PS}
P.~M. Pavlov and S.~G. Samko.
\newblock Description of spaces {$L^{\alpha }_{p}(S_{n-1})$} in terms of
  spherical hypersingular integrals.
\newblock {\em Dokl. Akad. Nauk SSSR}, 276(3):546--550, 1984.

\bibitem{EStein}
E.~M. Stein.
\newblock {\em Singular integrals and differentiability properties of
  functions}.
\newblock Princeton Mathematical Series, No. 30. Princeton University Press,
  Princeton, N.J., 1970.

\bibitem{Stri}
R.~S. Strichartz.
\newblock Analysis of the {L}aplacian on the complete {R}iemannian manifold.
\newblock {\em J. Functional Analysis}, 52(1):48--79, 1983.

\bibitem{Trie}
H.~Triebel.
\newblock Spaces of {B}esov-{H}ardy-{S}obolev type on complete {R}iemannian
  manifolds.
\newblock {\em Ark. Mat.}, 24(2):299--337, 1986.

\bibitem{WX}
J.~Wei and X.~Xu.
\newblock Classification of solutions of higher order conformally invariant
  equations.
\newblock {\em Math. Ann.}, 313(2):207--228, 1999.

\bibitem{XY3}
X.~Yu.
\newblock Liouville type theorems for integral equations and integral systems.
\newblock {\em Calc. Var. Partial Differential Equations}, 46(1-2):75--95,
  2013.

\end{thebibliography}

\end{document}